\documentclass[10pt,a4paper,final]{article}
\usepackage[utf8]{inputenc}
\usepackage[T1]{fontenc}
\usepackage{amsmath}
\usepackage{amsfonts}
\usepackage{amssymb}
\usepackage{graphicx}
\usepackage{amsthm}
\usepackage{mathtools}
\usepackage{tikz}
\usepackage[inline]{enumitem}
\usepackage{thmtools}
\usepackage{thm-restate}
\usepackage{hyperref}
\usepackage{cleveref}
\usepackage{caption}
\usepackage{subcaption}
\usepackage{cjhebrew}
\usepackage{url}
\usepackage{placeins}
\usepackage{flafter}
\usepackage{comment}
\usepackage[normalem]{ulem}
\usepackage{pdfpages}

\newtheorem{theorem}{Theorem}[section]
\newtheorem{corollary}{Corollary}[theorem]
\newtheorem{lemma}[theorem]{Lemma}

\newtheorem{conjecture}[theorem]{Conjecture}

\theoremstyle{definition}
\newtheorem{definition}{Definition}[section]

\theoremstyle{remark}
\newtheorem*{remark}{Remark}

\DeclarePairedDelimiter\abs{\lvert}{\rvert}%
\DeclarePairedDelimiter\norm{\lVert}{\rVert}%

\newcommand{\interior}[1]{%
	{\kern0pt#1}^{\mathrm{o}}%
}

\newcommand{\R}{\mathbb{R}}
\newcommand{\C}{\mathbb{C}}
\newcommand{\NN}{\mathbb{N}}

\newcommand{\M}{\mathcal{M}}
\newcommand{\N}{\mathcal{N}}
\newcommand{\OO}{\mathcal{O}}
\newcommand{\B}{\mathcal{B}}

\newcommand{\A}{\Lambda}
\newcommand{\LL}{\mathcal{L}}
\newcommand*\colvec[2]{\begin{pmatrix} #1 \\ #2\end{pmatrix}}

\newcommand{\mdiags}[1]{#1 \setminus \text{Diag}((-1,1))}

\title{Almost Regular Closedness of the Connectedness Locus for Pairs of Affine Maps on $\mathbb{R}^2$}
\author{Omer Rosler}

\begin{document}
	
\maketitle

\begin{abstract}

We study the connectedness locus $\N$ for the family of iterated function systems of pairs of homogeneous affine-linear maps in the plane. We prove this set is regular closed (i.e., it is the closure of its interior) away from the diagonal, except possibly for isolated points, which we conjecture do not exist. We provide an overview of the "method of traps", introduced by Calegari et al.\ in \cite{calegari}, which lies at the heart of our proof.
	
\end{abstract}
\tableofcontents

\section{Introduction}\label{sec:intro}
Let $X$ be a complete metric space, and suppose $f, g: X\to X$ are two contractions. $\{f,g\}$ is called an iterated function system (IFS henceforth). There exists a unique non-empty compact set $\A\subset X$, called the {\it attractor}, that satisfies
\begin{equation}\label{def:atractor_property}
	\A = f(\A)\cup g(\A).
\end{equation}
It is well known (see \cite{Hata1985OnTS}) that if $f$ and $g$ are injective, then there is a topological dichotomy concerning the connectedness properties of $\A$:
That is, if $f\A\cap g\A \neq \emptyset$, then $\A$ is connected, otherwise $\A$ is a topological Cantor set.

If $f,g$ depend upon a parameter from some metric space $P$ in a continuous matter, that is, if we have maps $f,g:P\times X \to X$ such that $\{f(p,\cdot), g(p,\cdot)\}$ forms an IFS for all $p\in P$, then the map $\A: P\to H(X)$ defined by the attractor of the IFS for a given parameter, is continuous (where $H(X)$ denotes the space of compact subsets of $X$ equipped with the Hausdorff metric). For a proof, see \cite[Theorem~3.4]{jachymski1996continuous}. Usually we denote the parameter by a subscript, omitting it when  clear from the context. 

We define the connectedness locus associated with the family of IFSs:

\begin{definition}
	$\mathcal{L}(f,g)=\{p\in P : \A_p \text{ is connected}\}$
\end{definition}

The continuity of the attractor motivates the study of connectedness loci as topological spaces, which we will focus on. More specifically, finding interior points and characterizing them.

Connectedness loci exhibit fractal behavior, with a wide variety of geometric phenomena. Hence, in order to prove meaningful results, we need to specialize further and add more assumptions about the spaces and maps involved.

Arguably, the simplest cases to consider are homogeneous self-affine IFSs on a euclidean space. This means that both maps are affine, and differ by a constant vector. Equivalently, there exists a matrix $T\in\M_{d\times d}(\R)$ and a pair of vectors $b_1,b_2\in\R^d$ such that $f(x)=Tx+b_1,\ g(x)=Tx+b_2$ (where $X=\R^d$). Let $b=b_2-b_1$. Note that by applying a change of coordinates, we may assume $b_1=0$. Without loss of generality, we may also assume that $b$ is a cyclic vector, that is,
$$
Y:={\text{span}\{T^k b:k\geq 0\}}=\R^d.
$$ 
Otherwise, we replace $X$ with $Y$ and consider $T$ as a mapping on $Y$. 
Write
$$
\B = \big\{1+\sum_{n=1}^{\infty}a_nx^n: a_n\in\{-1,0,1\}\big\},
$$
and note that $\B$ is compact in the topology of uniform convergence (Lemma \ref{thm:B_compactness}).

The homogeneous self-affine case is relatively simple due to two reasons: 

First, the attractor has an alternate description using power series which allows us to apply tools from analysis:

\begin{lemma}\label{thm:att_linear}
	If $b$ is cyclic then the attractor of $\{Tx,Tx+b\}$ is given by power series in the following sense:
	$$
	\A = \left\{\sum_{n=0}^{\infty }a_n T^nb: a_n\in\{0,1\}\right\}
	$$
\end{lemma}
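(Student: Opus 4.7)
The plan is to use the uniqueness of the attractor: the compact set $\A$ is the unique non-empty compact subset of $\R^d$ satisfying $\A=T\A\cup(T\A+b)$, so it suffices to exhibit the right-hand side as a non-empty compact set with this self-similarity property.

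First I would define
\[
	S := \left\{\sum_{n=0}^{\infty}a_nT^nb : a_n\in\{0,1\}\right\}
\]
and verify that the defining series converges. Since $T$ is a contraction, $\|T^n\|\leq C\rho^n$ for some $\rho<1$, so each series converges absolutely and the convergence is uniform in the coefficient sequence $(a_n)\in\{0,1\}^{\NN}$. In particular the map
\[
	\Phi:\{0,1\}^{\NN}\to\R^d,\qquad (a_n)\mapsto\sum_{n=0}^{\infty}a_nT^nb,
\]
is continuous when the domain carries the product topology. Since $\{0,1\}^{\NN}$ is compact by Tychonoff, $S=\Phi(\{0,1\}^{\NN})$ is compact and clearly non-empty.

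Next I would verify the functional equation $S=T S\cup(TS+b)$ by reindexing. Given $x=\sum_{n\geq 0}a_nT^nb\in S$, split according to $a_0\in\{0,1\}$: if $a_0=0$ then $x=T\bigl(\sum_{n\geq 0}a_{n+1}T^nb\bigr)\in TS$, and if $a_0=1$ then $x=b+T\bigl(\sum_{n\geq 0}a_{n+1}T^nb\bigr)\in TS+b$. The reverse inclusion is obtained by the same computation in the opposite direction: an element $Ty$ with $y=\sum_{n\geq 0}c_nT^nb\in S$ equals $\sum_{n\geq 0}c_nT^{n+1}b$, which lies in $S$ with $a_0=0$, and similarly for $Ty+b$ with $a_0=1$.

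By uniqueness of the attractor, $S=\A$. The only step that needs any care is the uniform convergence of the series, which is routine given that $T$ is a contraction; I don't expect any genuine obstacle. I note that the cyclicity assumption on $b$ does not actually enter this particular argument, it only matters for later results where one wants $\A$ to be ``full-dimensional'' in $\R^d$.
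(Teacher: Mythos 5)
Your proof is correct and follows the same route as the paper: exhibit the right-hand side as a non-empty compact set satisfying the self-similarity equation $\A=f(\A)\cup g(\A)$ and invoke uniqueness of the attractor. The paper states this in a single sentence; you have merely supplied the routine compactness and reindexing checks, and your observation that cyclicity plays no role in this particular lemma is accurate.
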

\begin{proof}
	The right-hand side is non-empty, compact and satisfies (\ref{def:atractor_property}), therefore this claim follows from the uniqueness of the attractor.
\end{proof}

This description allows us to characterize connectedness using power series as well:
\begin{lemma}\label{thm:locus_based_on_eigen}
	Suppose $T$ is a linear contraction with eigenvalues $\lambda_j$ for $j=1,\dots,m$ having algebraic multiplicities $k_j\geq 1$ and geometric multiplicities equal to one. Let $b$ be a cyclic vector for $T$. Then the attractor of $\{Tx,Tx+b\}$ is connected iff there exists an $f\in\B$ such that
	\begin{equation} \label{f_characterization}
		f(\lambda_j)=\cdots=f^{(k_j-1)}(\lambda_j)=0, j=1,\dots,m
	\end{equation}
	
	In particular, connectedness does not depend on $b$. 
\end{lemma}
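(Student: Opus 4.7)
The plan is to apply the Hata dichotomy, translate the intersection criterion using the power-series description of $\A$ from Lemma \ref{thm:att_linear}, and then reduce everything to a condition at the eigenvalues of $T$ via the Jordan form.

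First, by the Hata dichotomy quoted in the introduction, $\A$ is connected iff $T\A \cap (T\A + b) \neq \emptyset$. Using Lemma \ref{thm:att_linear}, this is equivalent to the existence of $(a_n),(a_n') \in \{0,1\}^{\NN}$ with
$$
T\sum_{n=0}^\infty a_n T^n b \;=\; T\sum_{n=0}^\infty a_n' T^n b \;+\; b,
$$
equivalently, setting $c_n := a_n - a_n' \in \{-1,0,1\}$, with $\sum_{n=0}^\infty c_n T^{n+1} b = b$. Rearranging, this is $f(T)b = 0$ for the power series $f(x) := 1 - \sum_{n\ge 0} c_n x^{n+1}$, which lies in $\B$. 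The correspondence is reversible because every $\{-1,0,1\}$-valued sequence splits as a difference of two $\{0,1\}$-valued sequences. Hence connectedness is equivalent to the existence of $f\in\B$ with $f(T)b=0$. Since $T$ is a contraction its spectral radius is $<1$, so $f(T) = \sum_n f_n T^n$ converges absolutely and $f$ is holomorphic on a disk containing every $\lambda_j$.

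Next, I would upgrade $f(T)b=0$ to $f(T)=0$ using cyclicity. The kernel $V := \ker f(T)$ contains $b$ and is $T$-invariant (as $T$ commutes with every partial sum of $f(T)$, hence with $f(T)$ itself), so $V \supseteq \text{span}\{T^k b : k\ge 0\} = \R^d$, forcing $V = \R^d$. It remains to rewrite $f(T)=0$ via the Jordan structure. Because each eigenvalue has geometric multiplicity one, each $\lambda_j$ contributes a single Jordan block $J_j$ of size $k_j$; writing $N_j := J_j - \lambda_j I$, which is nilpotent of index $k_j$, and applying the Taylor expansion
$$
f(J_j) \;=\; \sum_{i=0}^{k_j-1} \frac{f^{(i)}(\lambda_j)}{i!}\, N_j^i,
$$
valid for $f$ holomorphic near $\lambda_j$, the linear independence of $I,N_j,\dots,N_j^{k_j-1}$ yields $f(J_j)=0$ iff $f(\lambda_j)=f'(\lambda_j)=\dots=f^{(k_j-1)}(\lambda_j)=0$. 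Combining all blocks gives the claimed characterization, and the ``in particular'' clause follows at once since the final condition involves only the eigenvalues of $T$ and the coefficient sequence of $f$.

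The main subtlety is analytic rather than algebraic: once one leaves the polynomial setting, $f(T)=0$ is a statement about a convergent operator series, and the Jordan-block Taylor formula and the commutation used in the cyclicity argument must be justified for analytic $f$. Absolute convergence of $\sum_n f_n T^n$ together with the standard holomorphic functional calculus handle both points, so the genuine content is the translation of connectedness into an identity in the functional calculus — everything afterwards is linear algebra.
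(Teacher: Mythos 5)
Your proof is correct. The paper itself gives no argument here---it simply cites \cite[Proposition~2.4]{shmerkin_sol2000}---so there is no in-paper proof to compare against, but your argument is the standard one that the cited proposition uses: Hata's dichotomy reduces connectedness to $T\A\cap(T\A+b)\neq\emptyset$, the power-series description of $\A$ turns this into $f(T)b=0$ for some $f\in\B$, cyclicity of $b$ (via $T$-invariance of $\ker f(T)$) upgrades this to $f(T)=0$, and the single-Jordan-block-per-eigenvalue hypothesis together with the Taylor formula for $f(J_j)$ translates $f(T)=0$ into the stated vanishing of $f$ and its derivatives at each $\lambda_j$. One small point worth making explicit: Hata's dichotomy as quoted in the introduction requires $f,g$ injective, i.e.\ $T$ invertible; this is implicit throughout the paper's setting (and is forced anyway if the condition is to be satisfiable, since $f(0)=1\neq 0$ for every $f\in\B$), but you should state it when invoking the dichotomy. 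Otherwise the justification of the analytic functional-calculus steps (absolute convergence of $\sum f_nT^n$ from $\rho(T)<1$, commutation of $T$ with the limit, and the Jordan--Taylor formula for holomorphic $f$) is exactly what is needed, and the ``in particular'' clause follows as you say.
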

\begin{proof}
	See \cite[Proposition~2.4]{shmerkin_sol2000} with the proof in Section 7.
\end{proof}

Second, there is a very simple criteria that implies a point is in the locus, using a volume argument:
\begin{lemma}[Folklore]\label{thm:folklore}
	Suppose $T_1,T_2:\R^d\to \R^d$ are linear maps and $b_1,b_2\in \R^d$ are two vectors.
		If $\{T_1x+b_1,T_2x+b_2\}$ is an IFS of contracting maps, that is $\max\{\norm{T_1},\norm{T_2}\}<1$ for some operator norm, and $\abs{\det(T_1)}+\abs{\det(T_2)}\geq 1$, then the attractor is connected.
\end{lemma}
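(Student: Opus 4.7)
The plan is to argue by contradiction via a Lebesgue-volume computation on a trapping region. Suppose the attractor $\A$ is disconnected. The contraction hypothesis $\norm{T_i}<1$ forces $\abs{\det T_i}<1$, so together with $\abs{\det T_1}+\abs{\det T_2}\geq 1$ both $T_i$ must be invertible; hence $f_i(x):=T_ix+b_i$ are injective, and the Hata dichotomy recalled in the introduction gives $f_1(\A)\cap f_2(\A)=\emptyset$ with $\A$ a topological Cantor set. Write $F(S):=f_1(S)\cup f_2(S)$ for the Hutchinson operator.

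Next I manufacture a trapping set with good properties. A sufficiently large closed ball $B$ satisfies $F(B)\subseteq B$, and the nested iterates $F^n(B)$ decrease to $\A$ in Hausdorff metric. Since $f_1(\A)$ and $f_2(\A)$ are compact and strictly separated, for every large enough $N$ the sets $f_1(F^N(B))$ and $f_2(F^N(B))$ are also disjoint; fix such an $N$ and set $L:=F^N(B)$. Then $L$ is compact with $F(L)\subseteq L$, and as a finite union of non-degenerate ellipsoids it has non-empty interior. A short induction on $n$, splitting on whether two length-$n$ words agree in their first symbol and using injectivity of $f_1,f_2$, shows that $\{f_\omega(L):\omega\in\{1,2\}^n\}$ is pairwise disjoint for every $n\geq 1$. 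Combining with $\mathrm{vol}(f_\omega(L))=\prod_i\abs{\det T_{\omega_i}}\cdot\mathrm{vol}(L)$ yields the key identity
$$\mathrm{vol}(F^n(L))=(\abs{\det T_1}+\abs{\det T_2})^n\cdot\mathrm{vol}(L).$$

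Since $F^n(L)\subseteq L$ has bounded volume, $(\abs{\det T_1}+\abs{\det T_2})^n\leq 1$ for all $n$. If the hypothesized inequality is strict, this fails for large $n$ and we are done. The main obstacle is the boundary case $\abs{\det T_1}+\abs{\det T_2}=1$, where the identity is consistent with the nesting. Here equality forces $\mathrm{vol}(F^n(L))=\mathrm{vol}(L)$ for every $n$; continuity of Lebesgue measure on the decreasing intersection $\bigcap_n F^n(L)=\A$ then gives $\mathrm{vol}(\A)=\mathrm{vol}(L)>0$. Consequently $L\setminus\A$ is a relatively open subset of $L$ of measure zero, forcing $\interior{L}\subseteq\A$. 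But $\A$ is a topological Cantor set, hence totally disconnected, while $\interior{L}\neq\emptyset$ contains an open ball in $\R^d$; this contradiction closes the proof.
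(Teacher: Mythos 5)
Your proof is correct, and it is essentially the standard volume argument that the paper delegates to Shmerkin--Solomyak (Lemma 2.3, proved in their Section 7): build a trapping region $L$ with $F(L)\subseteq L$ whose level-$n$ cylinders are pairwise disjoint, compare $(\abs{\det T_1}+\abs{\det T_2})^n\mathrm{vol}(L)$ with $\mathrm{vol}(L)$, and in the equality case conclude $\interior{L}\subseteq\A$, contradicting total disconnectedness. No genuinely different route is taken.
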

\begin{proof}
	See \cite[Lemma~2.3]{shmerkin_sol2000} with the proof in Section 7.
\end{proof}

Combining Lemmas \ref{thm:locus_based_on_eigen} and \ref{thm:folklore} yields the following characterization:

\begin{corollary}\label{thm:general_trivial}
	If $\{\lambda_1,\dots,\lambda_m\}$ are the eigenvalues of $T$ and they all satisfy $\abs{\lambda_j}<1$, have algebraic multiplicity $k_j\geq 1$ and also
	\begin{equation} \label{the_trivial_ondition}
		\prod_{j=1}^{m}\abs{\lambda_j}^{k_j}\geq \frac{1}{2}.
	\end{equation}
	Then there exists $f\in\B$ having zeros at $\lambda_j$ of algebraic multiplicity $\geq k_j$, for ${j=1,\dots,m}$. In accordance with Lemma \ref{thm:folklore}, this implies that the attractor is connected.
\end{corollary}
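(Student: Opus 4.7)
The plan is to reduce the statement directly to Lemma \ref{thm:folklore} combined with Lemma \ref{thm:locus_based_on_eigen}, exploiting the fact that $\prod_j|\lambda_j|^{k_j}$ is exactly $|\det T|$ for any matrix whose spectrum consists of the $\lambda_j$ with the prescribed algebraic multiplicities. So the corollary is really a statement about polynomials/power series in $\B$, and one needs only to produce a single convenient matrix $T$ to which the previous lemmas apply.

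Concretely, I would first construct an $n \times n$ real matrix $T$, where $n = \sum_j k_j$, that has eigenvalues $\lambda_j$ of algebraic multiplicity $k_j$ and geometric multiplicity $1$. Choosing the real Jordan form built from a single Jordan block per real $\lambda_j$ (or a single real Jordan block per pair of complex conjugate eigenvalues) does the job. Since all $|\lambda_j| < 1$, the spectral radius of $T$ is less than $1$, so standard norm-adaptation produces an operator norm on $\R^n$ in which $\|T\| < 1$. The condition that all geometric multiplicities equal $1$ is equivalent to the minimal polynomial agreeing with the characteristic polynomial, which guarantees the existence of a cyclic vector $b$ for $T$.

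Next I would apply Lemma \ref{thm:folklore} to the IFS $\{Tx,\, Tx+b\}$ with $T_1=T_2=T$: the contraction condition holds by construction, and
\[
|\det T_1| + |\det T_2| = 2|\det T| = 2\prod_{j=1}^m |\lambda_j|^{k_j} \geq 1
\]
by hypothesis, so the attractor of $\{Tx, Tx+b\}$ is connected. Then Lemma \ref{thm:locus_based_on_eigen} (whose hypotheses on $T$ and $b$ are now satisfied) yields an $f \in \B$ with $f(\lambda_j) = f'(\lambda_j) = \cdots = f^{(k_j-1)}(\lambda_j) = 0$ for each $j$, which is precisely the assertion that $f$ has a zero of order at least $k_j$ at each $\lambda_j$.

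I do not expect any real obstacle here: the argument is a plug-and-play application of the two lemmas once the right $T$ is in hand. The only mild subtlety is in the construction of $T$ when some $\lambda_j$ are non-real, but the real Jordan form resolves this cleanly, and because elements of $\B$ have real coefficients, a zero at $\lambda$ of a given order automatically produces a zero of the same order at $\bar\lambda$, so the multiplicity bookkeeping is consistent with listing complex eigenvalues in conjugate pairs.
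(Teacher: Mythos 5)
Your proof is correct and is precisely the argument the paper implicitly intends when it says the corollary follows by "combining Lemmas \ref{thm:locus_based_on_eigen} and \ref{thm:folklore}": build (or take) a cyclic $T$ realizing the given spectrum, note $|\det T|=\prod_j|\lambda_j|^{k_j}$, invoke Lemma \ref{thm:folklore} with $T_1=T_2=T$ to get connectedness, and then read off $f\in\B$ from the forward direction of Lemma \ref{thm:locus_based_on_eigen}. You also correctly handled the two small subtleties the paper leaves unstated — renorming so that $\|T\|<1$ from $\rho(T)<1$, and the conjugate-pair bookkeeping for non-real eigenvalues — so there is nothing to add.
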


Therefore, the natural way to choose families of homogeneous self-affine IFSs is to consider the "type" of  eigenvalue multiplicities separately. We will restrict our focus for maps on the plane, in which case there are only three such "types", each corresponds to a separate set of parameters and connectedness locus.
After applying an invertible linear transformation, we may assume, without loss of generality that $T$ is one of the following:

\begin{enumerate*}
	\item $T=\begin{pmatrix}
		a & b \\
		-b & a
	\end{pmatrix}$,
	\item $T=\begin{pmatrix}
		\gamma & 0 \\
		0 & \lambda
	\end{pmatrix}$,
	\item $T=\begin{pmatrix}
		\lambda & 1 \\
		0 & \lambda
	\end{pmatrix}$,
\end{enumerate*}

where $\gamma,\lambda,a,b$ are real, $\abs{\gamma},\abs{\lambda}<1$ and $a^2+b^2<1$. Note that $\gamma\neq\lambda$ by the assumption that $T$ has a cyclic vector.
Each of these cases leads to a separate connectedness locus for the corresponding family of self-affine sets. Namely, we consider the sets
\begin{eqnarray*}
& \M := & \{z=a+ib: a^2+b^2<1 \text{ and there exists } f\in \B,\;  f(z)=0\} \\
& \N := & \{(\gamma,\lambda)\in(-1,1)^2: \text{ there exists } f\in \B,\; f(\gamma)=f(\lambda)=0\} \\
& \OO := & \{\lambda\in(-1,1): \text{ there exists } f\in \B ,\; f(\lambda)=f'(\lambda)=0\}
\end{eqnarray*}
We want the loci to be closed, therefore we must include certain degenerate subsets: for $\M$ we include real axis $\R\cap(-1,1)$ and for $\N$ we include the diagonal, henceforth denoted by
$$
\text{Diag}((-1,1))=\{(\lambda,\lambda)\in\R^2:\lambda\in (-1,1)\}
$$

Because the degenerate subsets are different topologically, we will exclude them when analyzing the topological properties of the loci.

High-dimensional analogs are also possible (based on the "eigenvalues pattern" of the linear maps).

\begin{figure}[h]
	\centering
	\fbox{\includegraphics[scale=0.8]{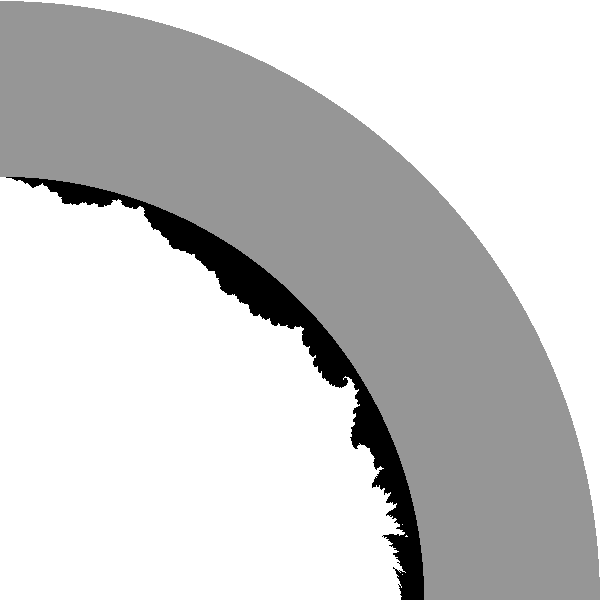}}
	\caption{$\M\cap[0,1]^2$ in black and gray. The gray part indicates the trivial part.}
	\label{fig:M}
\end{figure}

\begin{figure}[h]
	\centering
	\begin{subfigure}[h]{0.45\textwidth}
		\centering
		\fbox{\includegraphics[width=\textwidth]{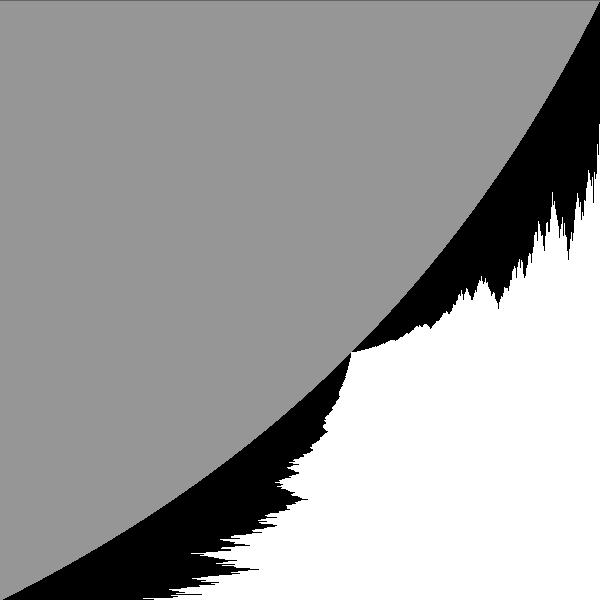}}
		\caption{$\N\cap([-1,-\frac{1}{2}]\times[\frac{1}{2},1])$}
		\label{fig:Nm}
	\end{subfigure}
	\hfill
	\begin{subfigure}[h]{0.45\textwidth}
		\centering
		\fbox{\includegraphics[width=\textwidth]{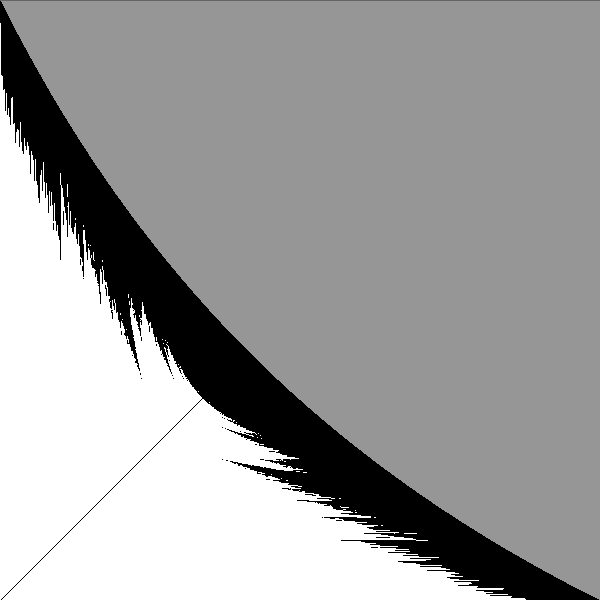}}
		\caption{$\N\cap[\frac{1}{2},1]^2$}
		\label{fig:Np}
	\end{subfigure}
	
	\caption{The set $\N$ in black and gray. The gray part indicates the trivial part.}
	\label{fig:N_plots}
\end{figure}

There is no figure of $\OO$ here, we refer to \cite{shmerkin_sol2000} for the details of $\OO$.

$\M$  was coined by Bandt as "the Mandelbrot set for pairs of linear maps" in \cite{bandt_orig} and studied as an analog of Mandelbrot set. It has a intricate structure and we refer to \cite{bandt_orig} for an in-depth analysis of the different geometric behaviors.

$\N$ is more straight-forward, and by zooming in we reveal that the behavior is mostly of "spikes" as we can see in Figure \ref{fig:N_zoom_spikes}. Around the diagonal it looks simpler as apparent in Figure \ref{fig:N_diag_fig}.
For a detailed discussion on the geometry of $\N$ we refer to \cite{N_general}.

\begin{figure}[h]
	\centering
	\begin{subfigure}[h]{0.45\textwidth}
		\centering
		\fbox{\includegraphics[width=\textwidth]{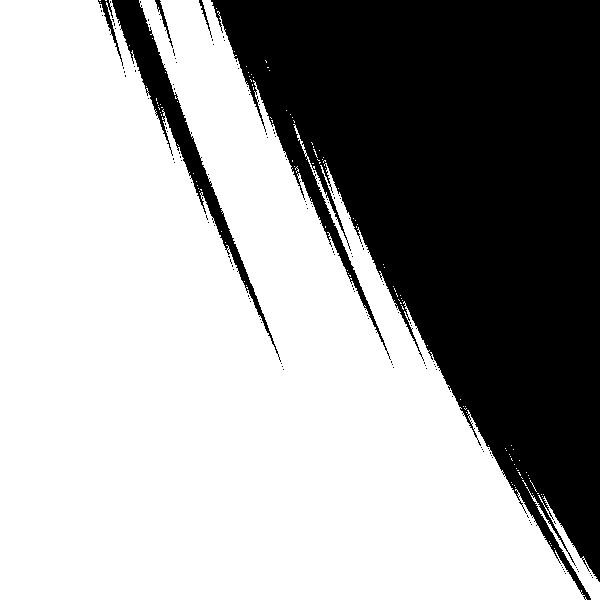}}
		\caption{$\N\cap[0.647,0.661]\times[0.677,0.691]$}
		\label{fig:N_zoom_spikes}
	\end{subfigure}
	\hfill
	\begin{subfigure}[h]{0.45\textwidth}
		\centering
		\fbox{\includegraphics[width=\textwidth]{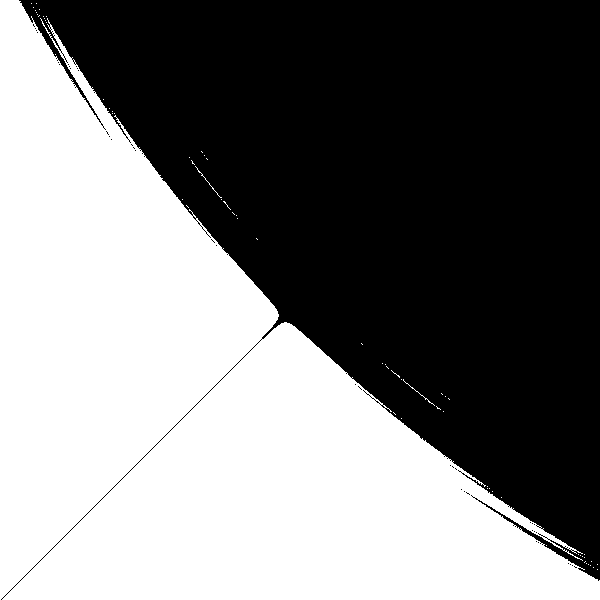}}
		\caption{$\N\cap[0.658,0.8]^2$}
		\label{fig:N_diag_fig}
	\end{subfigure}
	
	\caption{Zoomed in parts of $\N$}
	\label{fig:N_zoom}
\end{figure}
\begin{remark}
	All the figures describing $\M$ and $\N$ were generated by an escape-time algorithm described in \cite[Section~9]{bandt_orig} (with slight modifications to accommodate $\N$).
	Due to the fractal nature of these sets there are artifacts in the images.
	In Figure \ref{fig:N_zoom_spikes} there are small "islands", and in Figure \ref{fig:N_diag_fig} there is smoothness around the diagonal. These features are not real.
\end{remark}

Although $\M$ and $\N$ have similar defining properties (and similar topology), their geometry is widely different (see Figures \ref{fig:M}, \ref{fig:N_plots} and \ref{fig:N_zoom}). Therefore, finding shared properties of $\M$ and $\N$ is important, as it sheds light on the nature of the topology of connectedness loci in general. Bousch proved $\M$ is locally connected in \cite{Bousch1992SurQP} and \cite{bousch1993connexite}, while for $\N$ only a partial results exists due to Solomyak \cite{N_general}. We focus on one conjectured shared property -- being regular closed (that is, it is the closure of its interior) except for a specific degenerate subset. For $\M$ (minus the real axis), it was originally conjectured by Bandt in \cite{bandt_orig} and proven by Calegari et al.\ in \cite{calegari}. We adapt their method of proof to $\N$ and prove "almost" regular closedness: $\mdiags{\N}$ is regular closed except perhaps for isolated points.

The simplest way to find interior points for affine maps is using Lemma \ref{thm:folklore}, because the inequality is stable under perturbations of the parameter. This subset of parameter space is called "the trivial part". For $\M,\N,\OO$, these subsets are
\begin{eqnarray*}
	& \M_t := & \{z\in \C : 2^{-1/2}\leq\abs{z}<1\}, \\
	& \N_t := & \{(\gamma,\lambda)\in(-1,1)^2 : \frac{1}{2}\leq\abs{\gamma\lambda}<1\}, \\
	& \OO_t := & \{\lambda\in(-1,1) : 2^{-1/2}\leq\abs{\lambda}<1\}.
\end{eqnarray*}

Finding interior points which are not trivial is generally hard, even for linear IFSs. For example, in \cite[Theorem~2.11]{shmerkin_sol2000}, explicit interior points were found. Recently, a similar method was used in \cite{espigule2024collinear} to prove regular closedness of related connectedness loci. The "method of traps" introduced by \cite{calegari} allows finding interior points by checking simple geometric conditions of the attractor. Their method was described only for $\M$, and we adapt it to $\N$. Other adaptations exists, for example in \cite{himeki2020regular}, they used a modified version to prove regular closedness of a related connectedness locus.

The paper is structured as follows:
Section \ref{sec:results} summarizes the main results of the paper. Section \ref{sec:notation} introduces the notation we use. Section \ref{sec:traps} provides an overview of the method of traps introduced by Calegari et al.\ \cite{calegari}. In Section \ref{sec:N}, we specialize to the case of $\N$ and obtain a partial result regarding regular closedness. In Section \ref{sec:analytic} we extend this result using analytical methods and prove regular closedness (away from the diagonal) except for possibly isolated points.

\section{Summary of the Results}\label{sec:results}

The primary application of traps in \cite{calegari} is to prove regular closedness of the locus. This reduces to proving the existence of traps arbitrarily close to a given parameter. We leverage more structure of the locus, to prove a given vector is a translation vector between some cylinder sets. In \cite{calegari} this is accomplished using the Surjective Perturbation Lemma.

In the case of $\N$, this lemma generally fails; however, by characterizing when it holds (Lemma \ref{thm:surjective_perturbation_N}) we identify a subset of $\N$ that is regular closed. This set, denoted $\N'_U$ consists of pairs of zeros of a power series $f\in\B$, for which these zeros are both sign-changes of $f$, and satisfy some other regularity condition (Definition \ref{def:N_u}). We prove the analog of \cite[Theorem~7.2.7]{calegari} in almost the same exact manner. This is the following theorem:

\begin{restatable*}[Traps are dense in $\N'_U$]{thm}{densetraps}
	\label{thm:calegari_analog}
	$\mdiags{\N'_U}\subseteq \overline{\interior{\N}}$
\end{restatable*}

In Section \ref{sec:analytic}, we push the result obtained by traps and generalize it. We do this by finding which points of $\N$ are not limits of interior of points (using the already established interior points from Theorem \ref{thm:calegari_analog}). We denote the set of such "outlier points" by $\N''$ (Definition \ref{def:outlier_points}). We impose restrictions on these points using analytic methods. We prove that if $(\gamma,\lambda)\in\N$, with $f\in\B$ such that $f(\gamma)=f(\lambda)=0$, then $f$ is of very particular form:

First, we restrict the order of zeros (Lemma \ref{thm:eliminate_case}), then we determine the tail of $f$ (Lemma \ref{thm:tail_shape}). This shape of the tail implies these points are algebraic:

\begin{restatable*}{corollary}{algebraic}\label{thm:outlier_algebraic}
	Every $(\gamma,\lambda)\in \N''$ is algebraic.
\end{restatable*}

Finally, we establish our strongest partial restriction on outlier points:

\begin{restatable*}{theorem}{isolated}\label{thm:outlier_isolated}
	Every $(\gamma,\lambda)\in \N''$ is isolated in $\N$.
\end{restatable*}

We conjecture that there are no outlier points.

\section{Notation and Preliminaries}\label{sec:notation}
\subsection{General IFS}
Let $\{f,g\}$ be an IFS on a euclidean space $X=\R^d$.
We define the norm of $f$ using an operator norm. In particular if  $f(x)=Tx+b$ then $\norm{f}:=\norm{T}$. We define the norm of the IFS $\{f,g\}$ as the supremum of the norms of $f$ and $g$, that is, $\norm{\{f,g\}}=\sup\{\norm{f},\norm{g}\}$.

We denote the symbolic space of $k$-letters by $\Sigma^\infty_k$, which consists of right-infinite words formed from the letters $1,\dots k$. We equip this space with the usual prefix metric, defined by $d(u,v)=2^{-\abs{u\land v}}$, where $u\land v$ is the common prefix of the two words, In this paper, the term "word" refers to an element of this set unless specified as "finite".
For an IFS of maps $f_1,\dots,f_k$, we identify the alphabet with these maps. 

For a finite word $u^m$ we usually denote the length as a superscript (and omit it where clear from the context).

When the alphabet is from an IFS, we denote by $u^m\A$ the cylinder set associated with $u^m$. Such finite word induces a map on $X$ as well; by abuse of notation we denote this map by $u^m$ as well.

For both a finite and infinite word $w$, the subscript $w_i$ indicates the $i$-th letter.

Concatenation of a finite word $w$ and a (possibly finite) word $u$ is denoted by $wu$.

For an infinite word $w$, the truncation to the first $m$ letters is denoted by $w^m$.

The address map is denoted $\pi:\Sigma^\infty_k\times P \to X$, it maps a word $w$ and a parameter $\lambda$ to a unique point $\pi(w, \lambda)\in\A_\lambda$ which is defined by
$$
\{\pi(w,\lambda)\}=\bigcap_{m=1}^\infty w^m\A_\lambda
$$ (this set is a singleton due to Cantor's lemma). We omit the parameter where it is clear from the context, and simply denote $\pi(w)$.

\subsection{Choice of Normalization for $\M$ and $\N$}
When referring to $\M$, we will use the family of IFSs on $\mathbb{C}$ (which is equivalent to the linear description above):
$$
f(x)=zx, \ g(x)=zx+(1-z),
$$
where $z$ is a complex parameter.

When referring to $\N$, we will use the family of IFSs on $\R^2$:
$$
p(x)=Tx+s_p, m(x)=Tx-s_p
$$
where $p$ stands for "plus", $m$ for "minus" and $s_p=\colvec{1}{1}$. This ensures the center of symmetry of $\A$ is the origin. Additionally, this aligns the notation with \cite{hare_sidorov_2017}. See Figure \ref{fig:att_plot} for some examples of the attractor.

When we refer to any connectedness locus, we denote it by $\mathcal{L}$.

There are several symmetries in these sets which we use implicitly throughout the paper:
$$\lambda\in\LL \iff -\lambda\in\LL$$ for all three loci,
$$z\in\M\iff \bar{z}\in\M,$$
and 
$$(\gamma,\lambda)\in \N \iff (\lambda,\gamma)\in \N.
$$

\begin{figure}[h]
	\centering
	\begin{subfigure}[b]{0.3\textwidth}
		\centering
		\fbox{\includegraphics[width=\textwidth]{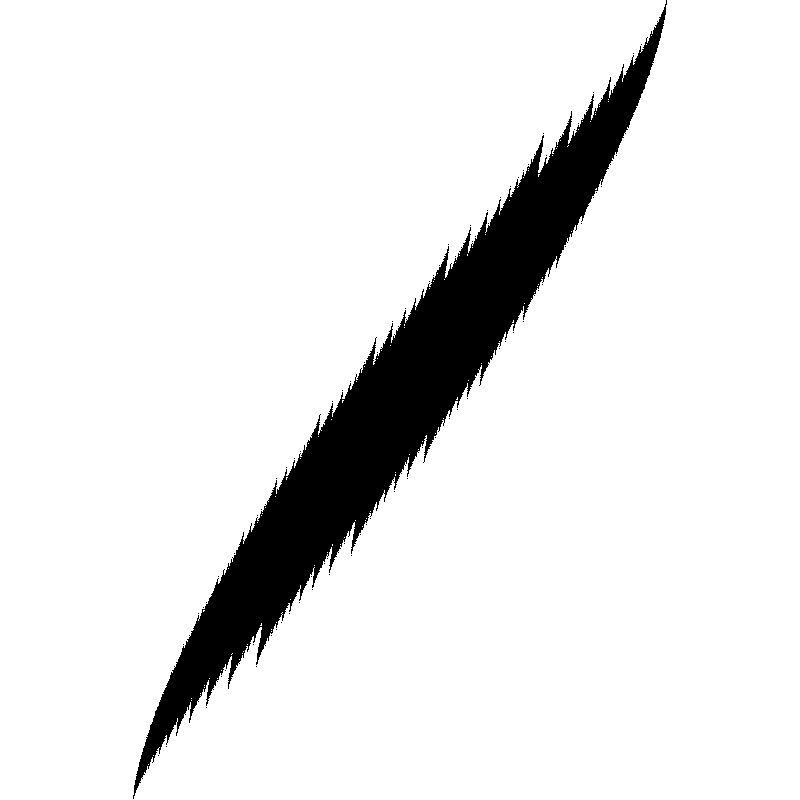}}
		\caption{$\gamma=\frac{1}{1.2},\lambda=\frac{1}{1.3}$}
	\end{subfigure}
	\hfill
	\begin{subfigure}[b]{0.3\textwidth}
		\centering
		\fbox{\includegraphics[width=\textwidth]{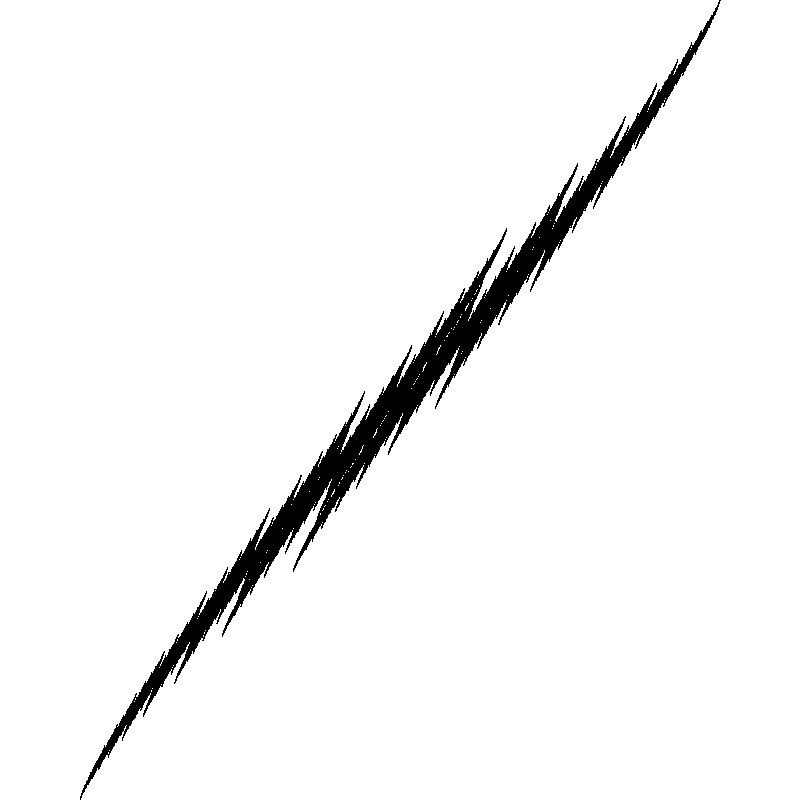}}
		\caption{$\gamma=\frac{1}{1.4},\lambda=\frac{1}{1.5}$}
	\end{subfigure}
	\hfill
	\begin{subfigure}[b]{0.3\textwidth}
		\centering
		\fbox{\includegraphics[width=\textwidth]{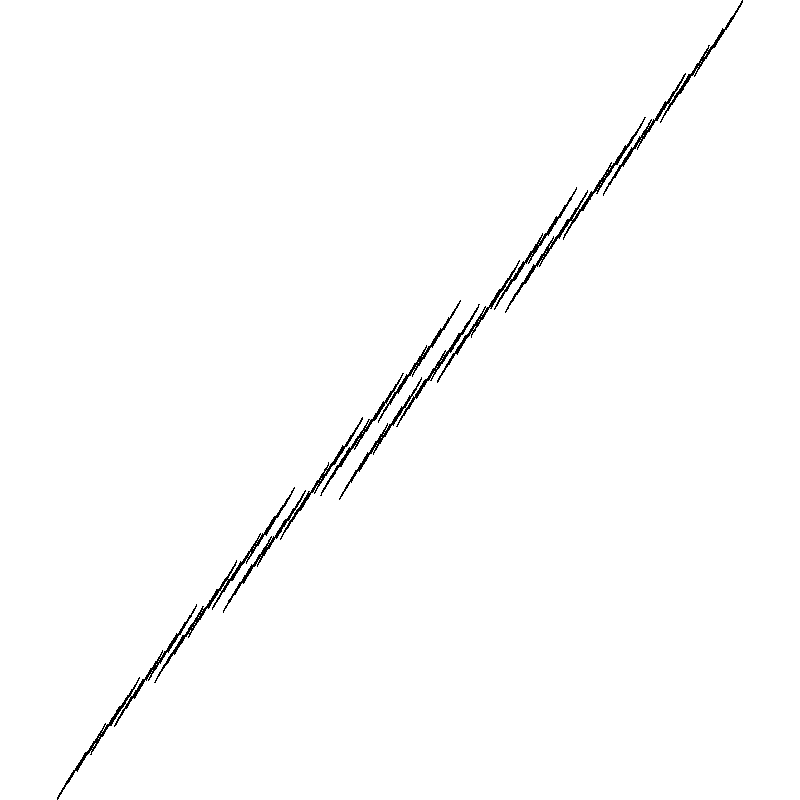}}
		\caption{$\gamma=\frac{1}{1.6},\lambda=\frac{1}{1.7}$}
	\end{subfigure}
	
	\caption{The attractor for the linear family in $\N$. The extremal points are $\pm(\frac{1}{1-\gamma},\frac{1}{1-\lambda})$}
	\label{fig:att_plot}
\end{figure}

\begin{remark}
	A priori, for $\N$ we also need to remove the "anti-diagonal", that is,
	$$
	\{(\lambda,-\lambda):\lambda\in(-1,1)\}.
	$$
	However, we claim all such points are in the trivial part of $\N$. Indeed, let $f\in\B$ such that $f(\lambda)=f(-\lambda)=0$. Write $f(x)=\sum_{n=0}^\infty a_nx^n$ where $a_n\in\{-1,0,1\}$. Then we can add the equations and get $0=f(\lambda)+f(-\lambda)=2\sum a_{2n}\lambda^{2n}$ and therefore $\lambda^2$ is a zero of a power series from $\B$. This implies $\lambda^2>\frac{1}{2}$ and hence $(\lambda,-\lambda)\in\N_t$.
\end{remark}

\section{Overview of the method of traps}\label{sec:traps}
This section provides an overview of the method of traps introduced in \cite{calegari}, where we slightly weaken the assumptions for our use case.

The method of traps is a simple yet useful tool to find interior points of connectedness loci in the plane. The goal is to force an intersection of two cylinder sets, in a way that is preserved under any perturbation of the parameter.

The core idea involves finding intersecting paths which are described by points in the symbolic space, and the proof of them being intersecting does not explicitly depend on the parameter, only on geometric properties of the attractor. Thus, the intersection is preserved under perturbation of the parameter (due to continuity of the address map). By choosing the paths correctly, one can prove that the point is in the interior of the connectedness locus  (Theorem \ref{thm:trap_implies_interior}). The construction of the paths in \cite{calegari} and in our paper relies on the IFS maps being affine (most notable in Lemma \ref{thm:short_hop_stability}), therefore we will focus on this case.

More explicitly, the construction utilizes "short-hop paths" (Definition \ref{def:short_hop_path}) which are  piecewise linear paths in the attractor, with finitely many interim vertices that satisfy certain local distance requirements. It turns out nothing else is required due to the "self-similar" (strictly speaking, self-affine) structure of the attractor.

To find such paths that are guaranteed to intersect, we need more assumptions. When assuming the IFS is self-affine and homogeneous, there is a clever way to show this (introduced in \cite{calegari}). We find cylinder sets that are "intertwined" (Definition \ref{def:trap_like}). In the homogeneous case, any two cylinder sets of the same order are translations of each another. Therefore, the condition of them being "intertwined" reduces to a simple geometric property of the translation vectors between the cylinder sets (Theorem \ref{thm:trap_in_affine_case}).

The final ingredient for constructing a trap is to find a normalized translation vector that satisfies this geometric condition. For every application this is done differently.

The usefulness of traps lies in the {\it simplicity} of verifying this condition, which implies the point is in the interior of the locus.

Throughout this section, assume $X$ is a plane, i.e.  $X=\R^2\text{ or }\C$. Also assume the IFS is self-affine, that is, $f,g$ are affine maps for all parameters.
\subsection{Short-hop Paths}
First, we demonstrate the existence of special paths between points in the same cylinder set. The paths used by \cite{calegari} are defined as follows:

\begin{definition}[Short-hop path, {\cite[Definition~7.1.1]{calegari}}]\label{def:short_hop_path}
	Let $p,q\in \A$, $\epsilon>0$ and $D$ be a topological disk containing $p,q$. An $(\epsilon, D)$-short-hop path from $p$ to $q$ is a finite sequence of words $e_0,\dots,e_m$ where $\pi(e_0)=p, \pi(e_m)=q$ and $d(\pi(e_i),\pi(e_{i+1}))<\epsilon$ and $\pi(e_i)\in D$ for all $i$.
\end{definition}
Some useful properties are given by the following lemma, which is similar to \cite[Proposition~7.1.2]{calegari} :
\begin{lemma}\label{thm:short_hop_properties}
\begin{enumerate}
	\item Let $u^n$ be a finite word of length $n$ (we  omit $n$). Suppose $e_0,\dots,e_m$ is an $(\epsilon, D)$-short-hop path. Then $uD$ is a topological disk and $$ue_0,\dots,ue_m$$ is an $(\norm{\{f,g\}}^n\epsilon,uD)$ short-hop path.
	\item Let $h$ and $h'$ be short-hop paths that intersect transversely. Then $uh,uh'$ also intersect transversely (where $uh$ refers to the short-hop path from the first part).
\end{enumerate}
\end{lemma}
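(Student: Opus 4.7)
The key observation is that the finite word $u=u^n$ of length $n$ induces an invertible affine self-map of $X$, obtained by composing the $n$ IFS maps it names. Writing this affine map (by the abuse of notation already adopted in the paper) as $u$ again, its linear part $A_u$ is the corresponding product of the linear parts of the $f,g$, so $\norm{A_u}\leq \norm{\{f,g\}}^n$ and $u$ is a homeomorphism of $X$ onto itself.

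For part 1, I would first note that $uD$ is a topological disk because $u$ is a homeomorphism. The main work is then to establish the naturality identity $\pi(uw)=u(\pi(w))$ for every infinite word $w$, which is essentially immediate from the defining nested intersection: for $m\geq n$ the prefix $(uw)^m$ equals the concatenation $u\cdot w^{m-n}$, hence
$$
\{\pi(uw)\}=\bigcap_{m\geq n}(uw)^m\A \;=\; u\!\left(\bigcap_{k\geq 0}w^k\A\right) \;=\; \{u(\pi(w))\},
$$
using that $u$ is a continuous injection, so it commutes with decreasing nested intersections of non-empty compacts. Applying this to $w=e_i$ yields $\pi(ue_i)=u(\pi(e_i))\in uD$ (the containment because $\pi(e_i)\in D$), and the distance bound then follows from the affine form of $u$:
$$
d(\pi(ue_i),\pi(ue_{i+1}))=d(u(\pi(e_i)),u(\pi(e_{i+1})))\leq \norm{A_u}\cdot d(\pi(e_i),\pi(e_{i+1})) < \norm{\{f,g\}}^n\epsilon.
$$

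For part 2, the plan is to observe that the polygonal curve realizing the short-hop path $ue_0,\dots,ue_m$ is precisely the image under $u$ of the polygonal curve realizing $e_0,\dots,e_m$, since $u$ is affine and therefore sends each straight segment to a straight segment. Because $u$ is an invertible affine map of the plane, it is a diffeomorphism and hence preserves transverse intersections of curves. Thus if $h,h'$ meet transversely, so do $uh,uh'$.

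There is no substantial obstacle: the only step requiring any actual argument is the naturality identity $\pi(uw)=u(\pi(w))$ (together with the observation that $u$ is invertible, which is what allows transversality to be preserved). Everything else is a mechanical consequence of $u$ being an invertible affine map with linear part of operator norm at most $\norm{\{f,g\}}^n$.
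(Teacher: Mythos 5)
Your proof is correct and follows exactly the same reasoning the paper compresses into one line (``This is trivial, noting $f,g$ are affine and hence so is $u$''): you simply unpack the observation that $u$ is an invertible affine map with linear part of norm at most $\norm{\{f,g\}}^n$, together with the naturality identity $\pi(uw)=u(\pi(w))$. There is nothing to add; the paper deliberately leaves these mechanical details to the reader.
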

\begin{proof}
	This is trivial, noting $f,g$ are affine and hence so is $u$.
\end{proof}
\begin{lemma}\label{thm:short_hop_stability}
Let $\delta=d(f\A,g\A)$.
\begin{enumerate}
\item The set $N_\frac{\delta}{2}(\A)$ is path-connected; indeed, there exists short-hop paths between any two points in $\A$, lying entirely inside this neighborhood.
\item Suppose $p,q \in u^n\A$ where $u^n$ is a finite word of length $n$. Then there exists an $\frac{\norm{\{f,g\}}^n\delta}{2}$-short-hop path between them.
\item Transversal intersection of short-hop paths is preserved under sufficiently small perturbation of the parameter.
\end{enumerate}
\end{lemma}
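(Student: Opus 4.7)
The plan is to establish the three parts in sequence, each leveraging the affine structure of the IFS.

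For Part 1, the key case is $\delta = 0$, where $\A$ is a compact connected metric space. A standard topological argument then shows $\A$ is $\epsilon$-chain connected for every $\epsilon > 0$: for fixed $p \in \A$, the set of points $\epsilon$-chain-reachable from $p$ is both open and closed in $\A$, hence equals all of $\A$. Lifting each chain-point to a word via the surjective address map $\pi$ yields the required short-hop path, and the polygonal arc joining consecutive vertices lies in $N_{\delta/2}(\A)$ since every point of a segment is within half its length of an endpoint, hence within $\delta/2$ of $\A$. Path-connectedness of $N_{\delta/2}(\A)$ follows.

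For Part 2, we reduce to Part 1 using the affine bijection $u^n : \A \to u^n\A$. Given $p, q \in u^n\A$, let $p', q' \in \A$ be their preimages, apply Part 1 to obtain a $\delta/2$-short-hop path between them, and then push it forward through $u^n$ using Lemma~\ref{thm:short_hop_properties}(1). Each hop length scales by at most $\norm{u^n} \leq \norm{\{f,g\}}^n$, giving the desired bound.

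For Part 3, the observation is that transversal intersection of polygonal paths in the plane is an open condition on the positions of the vertices: two line segments crossing transversely in their interiors continue to do so under sufficiently small perturbation of their four endpoints, by the standard stability of transverse intersection (equivalently, the implicit function theorem applied to the $2 \times 2$ linear system determining the intersection point). The vertices of any short-hop path are values of the address map $\pi(e_i, \cdot)$, which is jointly continuous in word and parameter by the result recalled in the introduction (via \cite{jachymski1996continuous}). Since a short-hop path has finitely many vertices, and the collection of transversal crossings between the two given paths is finite, we can choose a single parameter-neighborhood in which every one of these crossings persists simultaneously.

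The main obstacle is the careful handling of Part 1 in the boundary regime $\delta = 0$: one must interpret ``$\delta/2$-short-hop'' as ``a short-hop path with hops sufficiently small to lie in the prescribed neighborhood,'' and check this flexibility is compatible with the quantitative $\delta/2$ bound in Part 2. The scaling in Part 2 shows this causes no real issue, since Part 1 is only invoked at the level of the base attractor before everything is contracted down by $u^n$.
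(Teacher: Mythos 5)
Your Parts 2 and 3 are essentially the argument the paper has in mind (the paper defers Parts 1 and 2 to Calegari's Lemma 5.2.2 and Proposition 7.1.2, and its Part 3 argument is the same continuity-plus-finitely-many-open-conditions idea you use). However, your Part 1 has a genuine gap.

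You treat $\delta = 0$ as the key case and invoke $\epsilon$-chain connectedness of $\A$, which relies on $\A$ being connected. But the lemma is needed precisely when $\delta = d(f\A, g\A) > 0$, i.e.\ when $\A = f\A \sqcup g\A$ is disconnected: in the proof of Theorem~\ref{thm:trap_implies_interior}, one \emph{assumes} $\delta > 0$ and derives a contradiction by building short-hop paths. For $\delta > 0$ your open-and-closed argument breaks down --- the set of points $\epsilon$-chain-reachable from $p$ with $\epsilon < \delta$ stays inside the piece of $\A$ containing $p$, since no hop of length $< \delta$ can cross the gap between $f\A$ and $g\A$. The actual proof must exploit the self-affine recursion: inside $f\A$ the sub-cylinders $ff\A$ and $fg\A$ are only $\norm{T}\delta < \delta$ apart (same linear part $T$ on both maps), so by induction each of $N_{\delta/2}(f\A)$ and $N_{\delta/2}(g\A)$ is short-hop connected with hops strictly smaller than $\delta$, and one crosses between the two pieces by a single hop of length $\delta$ between a pair of points realizing $d(f\A, g\A)$; the resulting polygonal path stays in $N_{\delta/2}(\A)$ because every point of a segment of length $\le \delta$ is within $\delta/2$ of an endpoint. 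Without this recursive structure your argument does not establish Part 1 in the case that matters. (Minor: in Part 3 you should also record that the finitely many strict inequalities defining the short-hop property itself persist, not only the transversal crossings; the paper spells this out, and it is the same kind of openness argument.)
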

\begin{proof}
The first part is \cite[Lemma~5.2.2]{calegari}.
The proof of the second part follows \cite[Proposition~7.1.2]{calegari}, only with weaker assumptions and $\abs{z}$ replaced with $\norm{\{f,g\}}$.

For the third part, consider the map that, given an address $w$, changes the parameter, that is the map defined by
$$\lambda \mapsto \pi(w,\lambda).$$ 
This map is continuous. 

Let $e_i,h_i$ be short-hop paths that intersect transversely for a parameter $\lambda$.
These are piecewise linear, with finitely many interim points inside $\A_\lambda$, intersecting transversely at a finite number of points.

For any parameter, we will consider the piecewise-linear path with vertices having the same addresses as the ones in the short-hop paths of the original parameter.
We will show that that under certain requirements, these paths would also be short-hop paths and have the same intersection number.

Indeed, being short hop is described by a number of strict inequalities on the distances of adjacent vertices of the form $$d(\pi_\lambda(e_i), \pi_\lambda(e_{i+1}))<d(f\A_\lambda,g\A_\lambda),$$ which are all satisfied for the original parameter. From the aforementioned continuities, each of the inequalities would be satisfied for any sufficiently close parameter. As there are only finitely many such conditions, we can require all of them. This is equivalent to a single upper bound on the distance of the perturbed parameter.

Similarly, we will impose more requirements. Assume the two original paths intersect transversely between vertices $p_1,p_2$ from the first path, and $q_1,q_2$ from the second path. 
After a small enough perturbation, these linear segments intersect transversely as well. We can impose this fact using a finite number of strict inequality requirements on the pairwise distances of the four points (to maintain the relative positions of the four points). Therefore we can maintain the transversal intersection point.

Each of these requirements becomes an upper bound on the distance of the perturbed parameter. As there are only finitely number of vertices and intersection points, there are a finite number of requirements. 

Therefore, any sufficiently small perturbation, would have these paths being short-hop and have the same intersection number.
\end{proof}
This leads to the following definitions:

\begin{definition}[{\cite[Definition~7.1.3]{calegari}}]\label{def:trap}
	Let $\lambda\in P$ be a parameter (and $\A=\A_\lambda$).
	
	Suppose $D$ is a closed topological disk with $\A\subseteq \interior{D}$.
	
	A pair of finite words $u^n,v^m$ of lengths $n,m$ (we omit the lengths) are called a trap for $(\lambda, D)$ if:
	\begin{enumerate}
		\item $u$ starts with $f$ and $v$ starts with $g$.
		\item There are points $p^{\pm}\in u(\A) \setminus v(D)$ and $q^{\pm}\in v(\A) \setminus u(D)$ and continuous paths $\alpha \subseteq u(D)$ with endpoints $p^\pm$ and $\beta \subseteq v(D)$ with endpoints $q^\pm$ with an algebraic intersection number $\neq 0$.
		\item $d(f(\A),g(\A))<\epsilon$ where $\epsilon$ satisfies $N_{\frac{\epsilon}{2}}(\A)\subseteq D$.
	\end{enumerate}
\end{definition}

\begin{figure}[h]
	\centering
	\begin{subfigure}[h]{0.45\textwidth}
		\centering
		\fbox{\includegraphics[width=\textwidth]{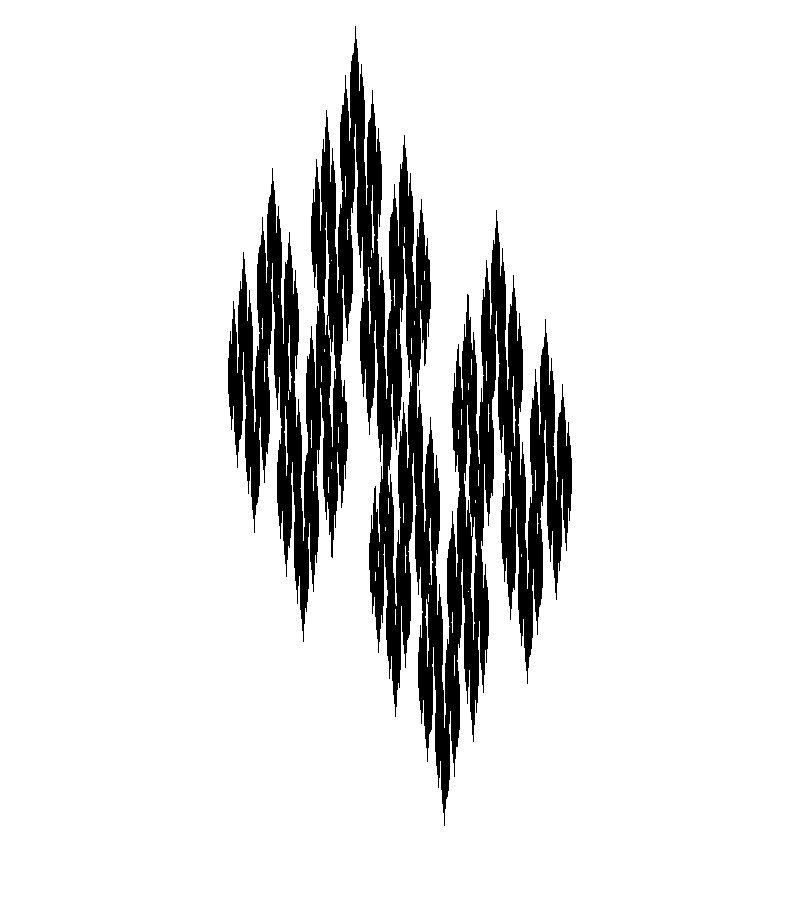}}
		\caption{The attractor from Figure \ref{fig:convex_hull_opp_signs}}
		\label{fig:trap_example_attractor}
	\end{subfigure}
	\hfill
	\begin{subfigure}[h]{0.45\textwidth}
		\centering
		\fbox{\includegraphics[width=\textwidth]{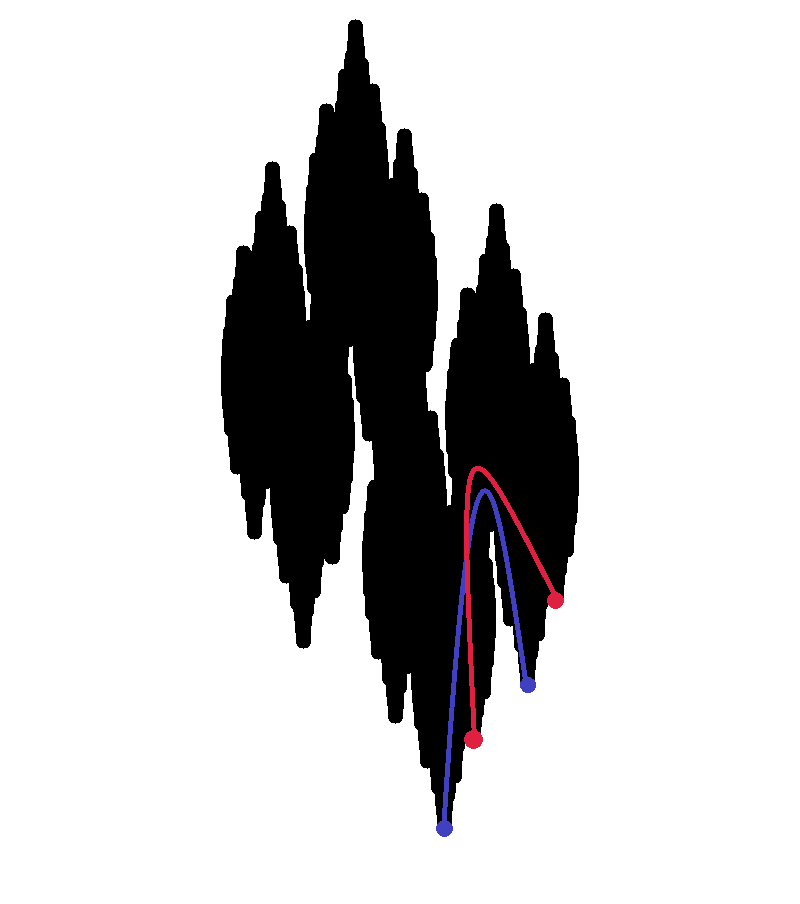}}
		\caption{A small neighborhood of the attractor, with trap words in blue and red}
		\label{fig:trap_example_trap}
	\end{subfigure}
	
	\caption{An example of a trap}
	\label{fig:trap_example}
\end{figure}

The definition of trap depends on a choice of paths $\alpha,\beta$. However, this choice can be avoided:
\begin{lemma}[{\cite[Lemma~7.1.4]{calegari}}]
	Suppose $u,v$ are a trap for $(\lambda,D)$. Let $ p^{\pm}\in u(\A) \setminus v(D)$ and $q^{\pm}\in v(\A) \setminus u(D)$ be as in Definition \ref{def:trap}. Then any paths $\alpha \subseteq u(D)$ with endpoints $p^\pm$ and $\beta \subseteq v(D)$ with endpoints $q^\pm$ must intersect.
\end{lemma}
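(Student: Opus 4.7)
The plan is to prove the stronger statement that the algebraic intersection number $\text{Int}(\alpha,\beta)$ is independent of the choice of valid paths $\alpha\subseteq u(D)$ with endpoints $p^{\pm}$ and $\beta\subseteq v(D)$ with endpoints $q^{\pm}$. Combined with the existence clause in part (2) of Definition~\ref{def:trap}, this forces every admissible pair to have nonzero intersection number, which in particular implies $\alpha\cap\beta\neq\emptyset$.

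For invariance in the first slot, I would fix $\beta$ and take two candidates $\alpha_0,\alpha_1\subseteq u(D)$ with common endpoints $p^{\pm}$. Since $u(D)$ is a topological disk and hence simply connected, the concatenation $\gamma:=\alpha_0\cdot\alpha_1^{-1}$ is a closed loop contained in $u(D)$, and by additivity of the intersection pairing under concatenation,
$$\text{Int}(\alpha_0,\beta)-\text{Int}(\alpha_1,\beta)=\text{Int}(\gamma,\beta).$$
The standard winding-number formula for the intersection of a loop with an arc gives $\text{Int}(\gamma,\beta)=W(\gamma,q^+)-W(\gamma,q^-)$. But $\gamma\subseteq u(D)$ while $q^{\pm}\notin u(D)$ by hypothesis, so both points lie in the (unique, unbounded) connected component of $\R^2\setminus u(D)$. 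Since $W(\gamma,\cdot)$ is locally constant on $\R^2\setminus\gamma\supseteq\R^2\setminus u(D)$, we get $W(\gamma,q^-)=W(\gamma,q^+)=W(\gamma,\infty)=0$, so $\text{Int}(\gamma,\beta)=0$. The analogous argument, with the roles of $u$ and $v$ swapped and using $p^{\pm}\notin v(D)$, yields invariance in the second slot.

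The main subtlety I expect is that the algebraic intersection number is classically defined only for paths in transverse (or at least generic) position, whereas Definition~\ref{def:trap} allows arbitrary continuous paths. This is handled by a standard perturbation: since $p^{\pm},q^{\pm}$ are all distinct and each lies outside the opposite disk, one can approximate $\alpha$ and $\beta$ by piecewise-linear representatives in general position, staying within $u(D)$ and $v(D)$ respectively and keeping endpoints fixed, and compute intersection numbers there. The winding-number argument above then applies verbatim, and transferring back to the original paths shows they cannot be disjoint (disjointness would yield a transverse realization with intersection number zero, contradicting the invariant).
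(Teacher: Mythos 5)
Your proof is correct. The paper itself supplies no argument here---it simply cites \cite[Lemma~7.1.4]{calegari}---so there is no in-text proof to compare against, but the route you take is the standard topological one: the algebraic intersection number is the same for every admissible pair $(\alpha,\beta)$ because $u(D)$ and $v(D)$ are simply connected (closed topological disks, being images of $D$ under injective affine maps) and each one avoids the endpoints of the other path, so the winding-number terms all vanish; hence every admissible pair realizes the same nonzero number as the pair furnished by Definition~\ref{def:trap}, and in particular the paths must meet.

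Two small points that would tighten the write-up. First, the identity $\mathrm{Int}(\gamma,\beta)=W(\gamma,q^+)-W(\gamma,q^-)$ can simply be taken as the \emph{definition} of the intersection number of a continuous loop with a continuous arc whose endpoints avoid the loop; this makes your final piecewise-linear perturbation paragraph unnecessary, since for disjoint $\gamma$ and $\beta$ the connectedness of $\beta\subseteq\R^2\setminus\gamma$ already forces $W(\gamma,\cdot)$ to be constant along $\beta$ and hence $\mathrm{Int}(\gamma,\beta)=0$. Second, you rely on $\R^2\setminus u(D)$ being connected; it is worth stating explicitly that this follows from $u(D)$ being a closed topological disk (not merely compact), since the winding-number-equals-zero step fails for a compact set whose complement has a bounded component.
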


\begin{theorem}\label{thm:trap_implies_interior}
	Suppose $u^n,v^m$ is a trap for $(\lambda, D)$, then $\lambda\in \interior{\mathcal{L}}$.
\end{theorem}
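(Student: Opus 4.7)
The strategy is to upgrade the trap data at $\lambda$ into a PL configuration of short-hop paths whose transverse intersection persists under small parameter perturbations, and then invoke the topological dichotomy for IFS attractors to deduce connectedness throughout a neighborhood of $\lambda$.

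First, I would apply Lemma \ref{thm:short_hop_stability}(2) to the pairs $p^\pm \in u(\A)$ and $q^\pm \in v(\A)$ to construct short-hop paths $\tilde\alpha$ from $p^+$ to $p^-$ with vertices in $u\A$ and $\tilde\beta$ from $q^+$ to $q^-$ with vertices in $v\A$. The step-size bounds from that lemma, combined with the scaling of neighborhoods in Lemma \ref{thm:short_hop_properties} and the trap condition $d(f\A,g\A)<\epsilon$ with $N_{\epsilon/2}(\A)\subseteq D$, guarantee $\tilde\alpha \subseteq u(D)$ and $\tilde\beta \subseteq v(D)$. The lemma just preceding this theorem then forces $\tilde\alpha\cap\tilde\beta\neq\emptyset$. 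Moreover, $\tilde\alpha$ and $\tilde\beta$ are homotopic rel endpoints to $\alpha$ and $\beta$ inside $u(D)$ and $v(D)$ respectively; since the endpoints satisfy $p^\pm\notin v(D)$ and $q^\pm\notin u(D)$, the algebraic intersection number is a homotopy invariant in this configuration, and so is inherited from $\alpha,\beta$ and is nonzero. A generic small PL perturbation of the vertices makes all intersections transverse without destroying the short-hop property.

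Next, I would apply Lemma \ref{thm:short_hop_stability}(3) to obtain an open neighborhood $U\ni\lambda$ in $P$ such that, for every $\lambda'\in U$, the PL paths $\tilde\alpha_{\lambda'},\tilde\beta_{\lambda'}$ given by the same vertex addresses remain short-hop paths in $\A_{\lambda'}$ and keep a transverse intersection with the same nonzero algebraic intersection number. By continuity of the address map and of the attractor in the Hausdorff metric, after possibly shrinking $U$, the open trap conditions $p^\pm_{\lambda'}\notin v(D)$, $q^\pm_{\lambda'}\notin u(D)$, and $d(f\A_{\lambda'},g\A_{\lambda'})<\epsilon$ all persist throughout $U$.

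Finally, I would conclude $U\subseteq \LL$. Suppose towards contradiction that $\A_{\lambda'}$ is disconnected for some $\lambda'\in U$; by the topological dichotomy $\delta':=d(f\A_{\lambda'},g\A_{\lambda'})>0$, and by the inclusions $u\A_{\lambda'}\subseteq f\A_{\lambda'}$, $v\A_{\lambda'}\subseteq g\A_{\lambda'}$ we have $d(u\A_{\lambda'},v\A_{\lambda'})\geq \delta'$. Being short-hop in $\A_{\lambda'}$ means $\tilde\alpha_{\lambda'}$ has consecutive-vertex distance $<\delta'$, so every point on it lies within $\delta'/2$ of some vertex, hence within $\delta'/2$ of $u\A_{\lambda'}$; analogously for $\tilde\beta_{\lambda'}$ and $v\A_{\lambda'}$. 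At any intersection point $z$ we then find $p\in u\A_{\lambda'}$ and $q\in v\A_{\lambda'}$ with $|z-p|,|z-q|<\delta'/2$, giving $|p-q|<\delta'$ and contradicting $d(u\A_{\lambda'},v\A_{\lambda'})\geq\delta'$. Therefore $\A_{\lambda'}$ is connected, so $\lambda\in\interior{\LL}$. The hardest step is the homotopy argument for the preservation of the algebraic intersection number under the short-hop replacement; here one must exploit precisely the trap condition that each path's endpoints lie outside the opposing disk, so that the pair $(\tilde\alpha,\tilde\beta)$ shares a well-defined relative homology class with $(\alpha,\beta)$.
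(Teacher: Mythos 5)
Your proposal follows essentially the same approach as the paper: construct short-hop paths between $p^\pm$ and $q^\pm$ via Lemma~\ref{thm:short_hop_stability}(2), force an intersection via the lemma preceding the theorem, derive a contradiction with $\delta>0$ by comparing $d(u\A,v\A)$ to the step size, and invoke Lemma~\ref{thm:short_hop_stability}(3) for the interior part. The only differences are cosmetic — you transport first and contradict at $\lambda'$ rather than first deducing $\delta_\lambda=0$, you use the cruder bound $d(u\A,v\A)<\delta'$ in place of the paper's sharper $d(u\A,v\A)\le \delta L^{\min(n,m)}$, and you re-derive the nonzero intersection number by a homotopy argument that is already the content of the cited Lemma~7.1.4 — so the substance is the same.
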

\begin{proof}
	Denote $L=\norm{\{f,g\}}$ and $\delta = d(f\A, g\A)$. We will prove $\delta = 0$, this will demonstrate $\lambda\in \mathcal{L}$.
	
	By Lemma \ref{thm:short_hop_stability}, there is a short-hop path between every two points in $\A$, that is a piecewise linear path where the distance between each pair of adjacent points is $\leq\delta$ (with interim vertices in $\A$ itself).
	
	$p^\pm$ share a prefix $u^n$ of length $n$. From Lemma \ref{thm:short_hop_properties}, there is a $(L^n\frac{\delta}{2}, u(D))$-short-hop path between $p^\pm$.
	
	Similarly, there is a $(L^m\frac{\delta}{2}, v(D))$-short-hop path between $q^\pm$.
	
	By the intersection condition, these paths intersect at a point $r$ and hence
	
	$$d(u\A, v\A)\leq d(u\A, r) + d(v\A, r) \leq \frac{\delta}{2} L^n + \frac{\delta}{2} L^m \leq \delta L^{\min(n,m)}
	$$
	
	where this bound follows from the fact that the paths are contained in $N_{L^n\frac{\delta}{2}}(u\A)$ and $N_{L^m\frac{\delta}{2}}(v\A)$ respectively.
	Also,
	$$\delta =d(f\A,g\A)\leq d(u\A,v\A)\leq \delta L^{\min(n,m)},$$
	because of the prefixes of $u$ and $v$.
	
	But if $\delta>0$ then $\delta L^{\min(n,m)}<\delta$ in contradiction.
	
	The interior part follows directly from Lemma \ref{thm:short_hop_stability} part 3.
\end{proof}

\subsection{The Homogeneous Self-Affine Case} \label{self_affine}
Still following \cite{calegari}, with a slightly different proof.
The homogeneous self-affine case is where the two maps of the IFS also differ by a constant vector for all parameters. We will assume this fact. As we assume the maps themselves are affine, we write them as $\{T+s_f,T+s_g\}$ for an invertible matrix $T$ and two vectors $s_f,s_g$. In this case we find that $\norm{\{T+s_f,T+s_g\}}=\norm{T}$. 
Notice that cylinder sets of order $m$ are all translates of $T^m\A$. For a finite word $u^m$, we'll denote this translation vector by $s_{u^m}$. This implies cylinder sets of the same order are also translations of each other. The translation vector between $u^m\A$ and $v^m\A$ is $s_{v^m}-s_{u^m}$, and can be expressed as $p(T)(s_f-s_g)$ where $p$ is a polynomial with coefficients in $\{\pm 1, 0\}$. The degree of the polynomial is $m-1-i$ where $i$ is the length of the common prefix of the two words.

It turns out that in the homogeneous case, there is a characterization of traps in terms of simpler conditions.
First, for convenience, we normalized these translation vectors:
\begin{definition}
	A normalized translation vector is a vector of the form $w=T^{-m}(s_{u^m}-s_{v^m})$ where $u^m$ and $v^m$ are words of length $m$, which do not share a common prefix. Equivalently, $u^m\A = v^m\A + T^mw$.
\end{definition}

The trap condition is forcing an intersection, in the case where these two cylinder sets are translates of each other, it suffices to show they alternate along the boundary. This leads to the following definition (still following \cite{calegari}):

\begin{definition}[{\cite[Definition~7.2.1]{calegari}}]\label{def:trap_like}
	A compact set $X\subset \mathbb{R}^2$ is cell-like if its complement is connected. A vector $w\in\mathbb{R}^2$ is called trap-like for $X$ if 
	\begin{enumerate}
		\item $X\cap (X+w)\neq\emptyset$ i.e $X\cup (X+w)$ is connected.
		\item There are four points in the boundary of $X\cup (X+w)$ which alternate between $X\setminus (X+w)$ and $(X+w)\setminus X$.
	\end{enumerate}
\end{definition}

\begin{lemma}[{\cite[Lemma~7.2.2]{calegari}}]\label{thm:trap_like_non_convex}
	Suppose $X$ is cell-like. Then there exists a trap-like vector iff $X$ is not convex.
\end{lemma}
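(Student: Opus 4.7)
The plan is to prove the two directions separately. For the direction \emph{trap-like vector exists implies $X$ is non-convex}, I argue by contrapositive: if $X$ is convex then so is $X+w$, and for any $w$ with $X\cap(X+w)\ne\emptyset$ the two overlapping convex compact sets in the plane have a union whose boundary is a Jordan curve composed of exactly two arcs, one contained in $\partial X\setminus\mathrm{int}(X+w)$ and the other in $\partial(X+w)\setminus\mathrm{int}(X)$. Hence the boundary has only two monochromatic runs of the colors $X\setminus(X+w)$ and $(X+w)\setminus X$, so four cyclically alternating points of these types cannot be found.

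For the converse, assume $X$ is cell-like and non-convex. I first extract a geometric feature: setting $H:=\operatorname{conv}(X)$, non-convexity gives $H\setminus X\ne\emptyset$, and cell-likeness (no holes in $X$) forces every connected component of $H\setminus X$ to meet $\partial H$, for otherwise it would be a bounded connected component of $\R^2\setminus X$. Fix such a ``bay'' $U$ and the concave sub-arc $\gamma\subset\partial X$ that bounds $U$ from inside $X$, with endpoints $a,b\in\partial H$. My plan is then to choose a short vector $w$ oriented roughly across the mouth of the bay so that $\partial(X+w)$ crosses $\partial X$ transversely at four points arranged around $\gamma$: intuitively, the protrusions of $X$ flanking $U$ straddle the corresponding translated protrusions of $X+w$, producing two crossings near each end of $\gamma$. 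These four crossings split each boundary into arcs that alternate between inside and outside the other set; on $\partial(X\cup(X+w))$ the ``outside'' arcs of each curve assemble into four cyclically alternating arcs of the required two types, from which I can pick one point per arc as the four alternating boundary points. The intersection condition $X\cap(X+w)\ne\emptyset$ is automatic for $w$ sufficiently small compared with $\operatorname{diam}(X)$.

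The main obstacle is that $X$ is only a cell-like compact set, so $\partial X$ can be a topologically wild Jordan set and the notion of ``transverse crossing'' needs care. My plan for handling this is to first approximate $X$ by an inscribed polygonal cell-like set $X'$ preserving the bay $U$, carry out the four-crossing construction explicitly on $X'$ by a local computation near $\gamma$ (this is elementary for a polygon), and then transfer the resulting trap-like vector back to $X$ using that the trap-like condition is open in $w$ and stable under small Hausdorff perturbations of the compact set, since four alternating boundary arcs survive such deformations. An alternative is to avoid transversality altogether by working with algebraic crossing numbers and a continuity/degree argument in $w$, which is the route I expect to actually adapt from \cite{calegari}.
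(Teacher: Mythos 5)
The paper does not supply a proof of this lemma: it is imported verbatim from \cite[Lemma~7.2.2]{calegari}, and the only related argument in the paper is the one-line remark in the proof of Lemma~\ref{thm:trap_like_verbose} that again defers to the same source. So there is no in-paper proof to measure your attempt against. That said, your contrapositive direction (trap-like $\Rightarrow$ non-convex) is correct and standard: for convex $X$ and a translate $X+w$, the boundary of the union splits into two monochromatic arcs, so four cyclically alternating points cannot exist. Your use of cell-likeness to force every component of $\operatorname{conv}(X)\setminus X$ to touch $\partial\operatorname{conv}(X)$ is also correct and is exactly the right way to locate a ``mouth'' on the convex hull boundary.

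For the converse, however, your proposal does not yet constitute a proof, and the proposed workaround has a genuine gap. You correctly identify the bay $U$ and its mouth, but you never pin down the vector $w$, which four points alternate, or why they lie on $\partial\bigl(X\cup(X+w)\bigr)$. More seriously, the polygonal-approximation fallback does not close this hole: trap-likeness of $w$ for an inscribed approximation $X'$ does \emph{not} automatically transfer to $X$. The boundary of $X\cup(X+w)$, and which of the four candidate points actually lie on it, can change discontinuously under Hausdorff perturbations of the underlying compact set, so ``stable under small Hausdorff deformations'' is itself an unproved (and for wild compacta dubious) claim. Your worry about transversality is largely misplaced — the trap-like condition in Definition~\ref{def:trap_like} is purely about set membership and cyclic order on the boundary, with no smoothness required — but the fix is to make the construction direct rather than to approximate. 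The more explicit version already in the paper, Lemma~\ref{thm:trap_like_verbose}, indicates the right choice: take a segment $l\subset\partial\operatorname{conv}(X)$ meeting $X$ only at its endpoints $p,q$, pick $v\in X^c$ near an interior point of $l$ on the $X$-side, and set $w=v-p$. Then one checks directly (using only that $p,q$ are extreme points of the hull and that a ball about $v$ avoids $X$) that points near $p,q$ and their $w$-translates give the required alternation, with no need to discuss crossings of $\partial X$ with $\partial(X+w)$ at all.
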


\begin{lemma} \label{thm:trap_like_verbose}
	Suppose $X$ is compact and non convex. Let $l$ be a line segment on the convex hull of $X$ with only a pair of intersection points with $X$ say $p,q$. For every $r\in l^o$ there exists an open ball $B(r,\epsilon_r)\subseteq X^c$ such that for every $v\in B(r,\epsilon_r)$ which is on the same side of $l$ in which $X$ resides, the vector $w:=v-p$ is trap-like for $X$.
\end{lemma}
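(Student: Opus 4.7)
My plan is to verify the two conditions of Definition~\ref{def:trap_like} by exhibiting four distinguished points of $\partial(X \cup (X + w))$ witnessing condition~(2), and then separately establishing condition~(1).

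For the setup, since $r \in l^o$ is disjoint from $X$ and $X$ is compact, pick $\epsilon_0 > 0$ with $B(r, \epsilon_0) \subseteq X^c$; write $r = p + t(q - p)$ with $t \in (0, 1)$, fix $v \in B(r, \epsilon_0)$ on the $X$-side of $l$, and set $w = v - p$. The four candidate boundary points are $p, q, v = p + w$, and $q + w$. For condition~(2) I would show that the three translates
\[
p - w = 2p - v, \qquad q - w = q + p - v, \qquad q + w = q + v - p
\]
all lie in $X^c$. These are within $\epsilon_0$ of $p - t(q - p)$, $p + (1 - t)(q - p) \in l^o$, and $p + (1 + t)(q - p)$ respectively: the first and third lie past the hull vertices $p, q$ along the line supporting $l$, hence outside $\text{conv}(X)$; the middle lies in $l^o \subseteq X^c$. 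All three are at positive distance from $X$, so shrinking $\epsilon_0$ to some $\epsilon_r$ uniformly guarantees these translates, as well as $p + w = v \in X^c$, lie in $X^c$. This yields $p, q \in X \setminus (X + w)$ and $v, q + w \in (X + w) \setminus X$. Since each of the four points is an extreme point of $\text{conv}(X)$ or $\text{conv}(X + w)$, it lies on the boundary of $X$ or $X + w$ with a neighborhood disjoint from the other translate, placing it on $\partial(X \cup (X + w))$. For the alternation, projecting onto the line through $p$ and $q$ sends the four points to signed offsets $0, t, 1, 1 + t$ times $|q - p|$ from $p$; traversing $\partial(X \cup (X + w))$ cyclically near the top of $\text{conv}(X \cup (X + w))$, one encounters them in the order $p, v, q, q + w$, alternating between $X \setminus (X + w)$ and $(X + w) \setminus X$.

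The main obstacle is condition~(1), i.e.\ $X \cap (X + w) \neq \emptyset$, equivalently $w \in X - X$. To handle this, I would use the "pocket" $U$ of $\text{conv}(X) \setminus X$ adjacent to $l$, whose boundary consists of $l$ together with an arc $\alpha \subseteq \partial X$ from $p$ to $q$ on the $X$-side. The translated arc $\alpha + w \subseteq X + w$ connects $v \in U \subseteq \text{conv}(X)$ to $q + w \notin \text{conv}(X)$, so it must cross $\partial \text{conv}(X)$; a topological argument following the proof of Lemma~\ref{thm:trap_like_non_convex} in \cite{calegari} forces this crossing to occur on a piece of $\partial \text{conv}(X) \cap X$ rather than on $l^o$ (for $\epsilon_r$ small enough), yielding a point of $X \cap (X + w)$. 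This is the crux of the argument and completes the verification that $w$ is trap-like for $X$.
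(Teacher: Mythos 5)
Your argument is correct and takes essentially the same route as the paper's, whose proof consists only of noting that the ball $B(r,\epsilon_r)\subseteq X^c$ exists by compactness and that ``for the trap-like property the proof is exactly the same as the proof in \cite[Lemma~7.2.2]{calegari}.'' Your write-up is a fleshed-out version of precisely that cited argument --- identifying the four alternating boundary points $p$, $v=p+w$, $q$, $q+w$ near the cut edge $l$ and reducing $X\cap(X+w)\neq\emptyset$ to a crossing of the translated pocket arc --- while deferring the same key topological crossing step to \cite{calegari} that the paper itself defers.
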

\begin{proof}
	The existence of the ball is guaranteed because both $l$ and $X$ are compact, and for the trap-like property the proof is exactly the same as the proof in \cite[Lemma~7.2.2]{calegari}.
\end{proof}
\begin{remark}
	We write this verbose version of the existence of trap-like vectors because it can be more useful when we know the convex hull of the attractor explicitly. A potential usage of this lemma is to find cusp corners in $\N$. See Section \ref{sec:questions} for more details.
\end{remark}

To go from trap-like vectors to traps, we need to start with a connected set. A potential issue is that, a priori, the starting parameter may not be in the locus. To address this, we can start with a parameter in the locus, i.e.\  with a connected attractor, take an $\epsilon$-neighborhood of the attractor and then perturb the parameter. This method is used in the proof of the existence of traps near a given parameter in the locus.

It turns out trap-like vectors for an $\epsilon$-neighborhood, which are also translation vectors between cylinder sets of the same order are traps:
\begin{theorem}\label{thm:trap_in_affine_case}
	Suppose $\A$ is an attractor and $X$ is filling any holes in $\A$.
	Let $\epsilon>0$ such that $D=N_\epsilon(X)$ is connected (and in particular cell-like). Let $w$ be both trap-like and a normalized translation vector between the cylinder sets $u^m\A, v^m\A$ of order $m$ which share no common prefix. Then there exists a trap for this parameter using $D$ as the disk.

\end{theorem}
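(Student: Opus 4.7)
My plan is to transport the trap-like structure on $X$ to the pair of cylinder images $u^m X$ and $v^m X$ via a single affine bijection, and then to extract from this the four points and two paths required by Definition \ref{def:trap}.

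Since $u^m$ and $v^m$ share no common prefix, one must start with $f$ and the other with $g$; after swapping names if necessary I may assume $u^m$ begins with $f$ and $v^m$ with $g$, which immediately gives condition~1 of the trap definition. Next, consider the affine map $\phi(x) = T^m x + s_{v^m}$. By construction $\phi(X) = v^m X$, and using the defining identity $T^m w = s_{u^m} - s_{v^m}$ one checks that $\phi(X+w) = u^m X$. Being a bijection, $\phi$ carries the four alternating boundary points of $X \cup (X+w)$ supplied by the trap-like hypothesis to four alternating boundary points of $u^m X \cup v^m X$. Using the observation that $\partial X \subseteq \A$ (the outer boundary of the hole-filled set lies in $\A$), together with the identity $\phi(q) = u^m(q - w)$ for $q \in X+w$, the two alternating points in $u^m X \setminus v^m X$ are in fact in $u^m \A$, and symmetrically the two points in $v^m X \setminus u^m X$ lie in $v^m \A$. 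These four points will be the required $p^{\pm}$ and $q^{\pm}$.

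To upgrade the condition ``outside the opposite cylinder set'' to ``outside the opposite $D$-neighborhood'', I use that $\phi$ is an affine bijection, so $v^m D = \phi(N_\epsilon(X))$, and hence $p^{\pm} \notin v^m D$ is equivalent to the $\phi$-preimage sitting at distance at least $\epsilon$ from $X$. Since each of the four alternating boundary points lies at strictly positive distance from the opposite closed set, I shrink $\epsilon$ if necessary so that all four of these distances exceed $\epsilon$; this preserves connectedness of $D = N_\epsilon(X)$, because $N_\epsilon$ of a connected set is connected for every $\epsilon > 0$ and $X$ is connected whenever $\A$ is. The paths $\alpha \subseteq u^m X \subseteq u^m D$ and $\beta \subseteq v^m X \subseteq v^m D$ joining the respective pairs then exist by path-connectedness of $X$, and condition~3 of Definition \ref{def:trap} is immediate because $\A$ connected gives $d(f\A, g\A) = 0 < \epsilon$.

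The topological heart of the argument, and the main potential obstacle, is verifying that $\alpha$ and $\beta$ have nonzero algebraic intersection number. This reduces to a classical plane-topology fact: if four points alternate along the boundary of a cell-like planar set, then any continuous path between one pair inside one ``half'' and any continuous path between the other pair inside the other ``half'' must transversally cross an odd number of times after a generic perturbation. The ambient cell-like set is $u^m X \cup v^m X$, which inherits cell-likeness from $X \cup (X+w)$ via the homeomorphism $\phi$, and the required boundary alternation is exactly what the trap-like hypothesis supplies. This is the same Jordan-curve argument underlying the proof of Lemma \ref{thm:trap_like_non_convex}, now applied in the pushed-forward coordinates.
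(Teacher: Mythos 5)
Your overall strategy is the same as the paper's: transport the four alternating points of the trap-like configuration for $X\cup(X+w)$ into $u^m\A$ and $v^m\A$ via the affine map corresponding to $v^m$, verify the separation conditions using the normalized-translation identity $T^m w=s_{u^m}-s_{v^m}$, and then conclude by the Jordan-curve alternation argument. The identification of the pushed-forward boundary points and the use of $\phi(X+w)=u^mX$ match the paper's computation exactly.

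There is a genuine gap, though, in how you handle condition~3 of Definition~\ref{def:trap} and the shrinking of $\epsilon$. You argue ``$\A$ connected gives $d(f\A,g\A)=0<\epsilon$'' and ``$X$ is connected whenever $\A$ is.'' But connectedness of $\A$ is \emph{not} a hypothesis of the theorem, and in the place where the theorem is actually invoked (the proof of Theorem~\ref{thm:calegari_analog}) it is applied at a perturbed parameter $(\gamma_1,\lambda_1)$ for which connectedness of $\A_{(\gamma_1,\lambda_1)}$ is precisely the thing one is trying to establish; a priori it could be a Cantor set. The theorem is careful to assume only that $D=N_\epsilon(X)$ is connected, and the correct route (taken in the paper) is to derive the bound $d(f\A,g\A)\leq\epsilon$ directly from that hypothesis: if $f\A$ and $g\A$ were farther than $2\epsilon$ apart, $N_\epsilon(X)$ would split into disjoint pieces. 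Your version would make the theorem circular in its intended application.

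Two smaller points. First, you assert the paths $\alpha,\beta$ exist inside $u^mX$ and $v^mX$ ``by path-connectedness of $X$,'' but $X$ (a compact connected set without local connectedness assumptions) need not be path-connected; you should instead place the paths in $u^mD$ and $v^mD$, using that $D=N_\epsilon(X)$ is open and connected, hence path-connected — this is what Definition~\ref{def:trap} requires anyway. Second, you shrink $\epsilon$ to guarantee the four alternating points escape the opposite $\epsilon$-neighborhoods; this implicitly changes the disk $D$ that the theorem fixes, and in any case the same argument again leans on $X$ being connected, which you inferred from connectedness of $\A$.
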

\begin{proof}
	Let $p_1,p_2,q_1,q_2$ be the four points implied in Definition \ref{thm:trap_like_non_convex}. These lie on the boundary of $D$, which is contained in $\A$, therefore we can apply the maps $u^m,v^m$ on them and remain in $\A$.
	The trap consists of the points $v^m(p_i), v^m(q_i)$ and the words $u^m,v^m$.
	
	First, $\A\subseteq \interior{D}$ is trivially true.
	We now prove $v^m(p_i)\in v^m(\A)\setminus u^m(D)$:
	First, $p_i\in \A \setminus N_\epsilon(X+w),$
	therefore $v^m(p_i)\in v^m\A$. Assume, for the sake of contradiction, that $v^m(p_i)= u^m(x)$ for some point $x$ that satisfies $d(x,X)\leq\epsilon$.
	
	Expanding the definitions of $v^m,u^m$ as maps we obtain
	$$
	T^m\cdot p_i + s_{v^m}= T^m \cdot x + s_{u^m}.
	$$
	
	Since $w$ is a normalized translation vector, we can rearrange and simplify this equation and arrive at
	
	$$p_i = x +w.$$
	
	This is in direct contradiction with the assumption $p_i\notin N_\epsilon(X+w)$.
	
	Similarly, $v^m(q_i)\in u^m(\A) \setminus v^m(D)$.
	Therefore $v^m(p_i), v^m(q_i)$ are in the correct sets.
	
	Since $N_\epsilon(X)$ is connected, we find that $\epsilon\geq d(f\A,g\A)$. Hence, by Lemma \ref{thm:short_hop_properties}, there exists short-hop paths between $p_1,p_2$ and $q_1,q_2$. Because the points are alternating, these paths must intersect. Without loss of generality, we can assume the intersection is transversal (otherwise, apply a small perturbation), and therefore by the the third part of Lemma \ref{thm:short_hop_properties} we obtain intersecting paths for $v^m(p_i), v^m(q_i)$ as well. Putting this together, we constructed a trap.
\end{proof}

The condition of trap-like is verified easily, we only need to find one which is a normalized translation vector. 

For the proof of regular closedness, \cite{calegari} introduced the Surjective Perturbation Lemma  which shows every vector is a translation vector for some nearby parameter. Combining this with existence of trap-like vectors yield traps nearby to the parameter we started with. And as traps are in the interior, this implies regular closedness.

Unfortunately, the Surjective Perturbation Lemma is not absolutely generalizable, and for the case of $\N$ it is not generally true. We will work around this in the next section.
\section{Applying the Method of Traps for $\N$}\label{sec:N}
Finding trap-like vectors is straightforward for $\N$, because the attractor is non-convex in all but trivial cases (Corollary \ref{thm:non_convexity_of_attractor}). Therefore the existence of trap-like vectors is guaranteed. As a result, we only need to characterize when trap-like vectors are also normalized translation vectors. We accomplish this using an analog of the Surjective Perturbation Lemma, similar to \cite{calegari}. 

\cite[Corollary~7.2.6]{calegari} proves every vector is a normalized translation vector for some nearby parameter. By combining this with the existence of trap-like vectors, Calegari et al.\ deduced the existence of parameters with traps nearby to any parameter in $\M\setminus \M_t$. This is their proof of regular closedness.

We aim to follow a similar path. However, the analogous statement of the Surjective Perturbation Lemma fails in general in $\N$. The core reason for the failure is that for real analytic functions, a slight perturbation may remove zeros -- unlike in the case of complex analytic functions. We address this issue by considering only the subset of the parameters for which the number of zeros is preserved. For this subset, the proof of regular closedness is very similar to the one in \cite{calegari}. The goal of this section is to prove this analog regular closedness.

Furthermore, this phenomenon of disappearing zeros only affects zeros of higher order, most of which, fortunately, lie within the trivial region. This is manifested in the following lemma:
\begin{lemma}\label{thm:order_of_zeros}
	Let $(\gamma,\lambda)\in \N \setminus(\N_t \cup \text{Diag}((-1,1)))$, and let $f\in\B$ with $f(\gamma)=f(\lambda)=0$. Then
\begin{enumerate}
	\item  $f$ has at most one zero of multiplicity $\geq 2$ in $(-1,1)$.
	\item If $0<\gamma<\lambda$, then $\gamma$ is of multiplicity 1 for $f$.
\end{enumerate}
\end{lemma}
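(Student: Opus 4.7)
The plan is to factor out the zeros of $f$ and derive coefficient inequalities from the resulting analytic function. I focus on part~(2); part~(1) will follow by an analogous factorisation with two double zeros in place of one.

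Suppose for contradiction that $\gamma$ is a zero of $f$ of multiplicity at least $2$, and set
\[ K(x) \;:=\; \frac{f(x)}{(x-\gamma)^{2}}, \]
which is analytic in $|x|<1$ thanks to $f(\gamma)=f'(\gamma)=0$. A direct manipulation of the Taylor series of $f$, using the two vanishing identities $\sum_{n\ge 0} a_{n}\gamma^{n}=0$ and $\sum_{n\ge 1} n\,a_{n}\gamma^{n-1}=0$ to eliminate the low-order partial sums, yields the closed form
\[ K_{n} \;=\; \sum_{m\ge 0}(m+1)\,\gamma^{m}\, a_{n+m+2}, \]
so $K_{0}=1/\gamma^{2}$ and $|K_{n}|\le (1-\gamma)^{-2}$. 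The vanishing $K(\lambda)=0$ (immediate from $f(\lambda)=0$ and $\lambda\ne\gamma$) together with the triangle inequality applied to $K_{0}=-\sum_{n\ge 1}K_{n}\lambda^{n}$ then forces
\[ (1-\gamma)^{2}(1-\lambda) \;\le\; \gamma^{2}\lambda. \qquad(\star) \]
An analogous argument applied one level up to $H(x):=f(x)/(x-\gamma)$, whose coefficients satisfy $H_{n}=\sum_{j\ge 0}a_{n+1+j}\gamma^{j}$ with $|H_{n}|\le (1-\gamma)^{-1}$ and $H_{0}=-1/\gamma$, and for which $H(\lambda)=0$, yields the companion bound $(1-\gamma)(1-\lambda)\le \gamma\lambda$, i.e.\ $\gamma+\lambda\ge 1$.

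To close part~(2) I would combine $(\star)$ and $\gamma+\lambda\ge 1$ with the order $0<\gamma<\lambda<1$ and the non-triviality $\gamma\lambda<\tfrac 12$ inherited from $(\gamma,\lambda)\notin\N_{t}$, and argue via a monotonicity/boundary analysis in the $(\gamma,\lambda)$-square that the feasible set is empty. Part~(1) is handled by the same device applied to $L(x):=f(x)/((x-\mu_{1})(x-\mu_{2}))^{2}$ whenever $\mu_{1}\ne\mu_{2}$ are two zeros of multiplicity at least $2$ in $(-1,1)$: the resulting coefficient bound $|L_{n}|\le ((1-|\mu_{1}|)(1-|\mu_{2}|))^{-2}$ combined with the vanishings $L(\gamma)=L(\lambda)=0$ at the remaining zeros of $f$ produces the analogous inequality $(1-|\mu_{1}|)^{2}(1-|\mu_{2}|)^{2}(1-|\mu|)\le (\mu_{1}\mu_{2})^{2}|\mu|$ for $\mu\in\{\gamma,\lambda\}$, which is again to be contradicted against $|\gamma\lambda|<\tfrac 12$.

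The main obstacle I foresee is the tightness of $(\star)$. The triangle-inequality bound used in deriving it is saturated only when the tail $(a_{n})_{n\ge N}$ is essentially of constant sign, but this saturation cannot occur simultaneously at the two distinct zeros $\gamma\ne\lambda$, so there is genuine slack. Quantifying this slack -- perhaps by splitting $(a_{n})$ into a ``constant-sign'' piece and a genuine correction and exploiting the discrete structure $a_{n}\in\{-1,0,1\}$ rather than the looser estimate $|a_{n}|\le 1$, or alternatively by feeding in additional information about $f'(\lambda)$ -- is where the decisive sharpening will have to come from in order to upgrade $\gamma+\lambda\ge 1$ to the required $\gamma\lambda\ge\tfrac 12$.
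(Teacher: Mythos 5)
Your factorisation gives correct coefficient formulas --- one can verify that $H_n=\sum_{j\ge0}a_{n+1+j}\gamma^j$, $K_n=\sum_{m\ge0}(m+1)\gamma^m a_{n+m+2}$, $H_0=-1/\gamma$, $K_0=1/\gamma^2$, and the bounds $|H_n|\le(1-\gamma)^{-1}$, $|K_n|\le(1-\gamma)^{-2}$ all check out, as do the inequalities $(\star)$ and $\gamma+\lambda\ge 1$ you deduce from $H(\lambda)=K(\lambda)=0$. But there is a genuine gap exactly where you flag it: these inequalities are far too weak to reach the conclusion $\gamma\lambda\ge\tfrac12$. For instance $\gamma=0.5$, $\lambda=0.9$ satisfies $(\star)$ (the left side is $0.025$, the right side $0.225$), satisfies $\gamma+\lambda\ge 1$, satisfies $0<\gamma<\lambda<1$, yet has $\gamma\lambda=0.45<\tfrac12$; worse, as $\lambda\to 1^-$ the left side of $(\star)$ tends to $0$, so $(\star)$ imposes no lower bound on $\gamma$ at all in that regime. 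The "monotonicity/boundary analysis" step is therefore not a technical loose end but the whole problem, and the proposal as written has no mechanism to close it. The difficulty is structural: your estimates only use $|a_n|\le 1$, i.e., they treat $\B$ as a ball in $\ell^\infty$ rather than exploiting the discrete constraint $a_n\in\{-1,0,1\}$. The paper's proof gets its teeth from exactly the quantitative consequences of that discreteness, imported from the literature: the bound $\alpha_2>0.668$ on the smallest real double zero of any $f\in\B$ (Shmerkin--Solomyak), the product-of-zeros inequality $\prod_{i\le n}|\beta_i|\ge(1+\tfrac1n)^{-n/2}(1+n)^{-1/2}$ (Beaucoup et al.), and for part (2) the tabulated lower bound $\psi$ on the third-smallest zero. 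None of these is recoverable from a pure $\ell^\infty$ triangle-inequality argument, and without some substitute of comparable strength the factoring route, while cleanly set up, cannot reach either conclusion. Part (1) has the same defect one level deeper, and in addition the statement concerns \emph{all} zeros of $f$ in $(-1,1)$, not just $\gamma$ and $\lambda$, so the factorisation by $((x-\mu_1)(x-\mu_2))^2$ would also need to be reconciled with the vanishings at whichever of $\gamma,\lambda$ survive.
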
 

\begin{proof}
Denote $\alpha_i$ the smallest real number such that there exists an $f\in\B$ with zero of order $i$ at $\alpha_i$.
Following \cite[Theorem~2.6]{shmerkin_sol2000}, $\alpha_2> 0.668$. 

Assume $\gamma$ is a zero of order $m$ and $\lambda$ a zero of order $k$ for $f$, and without loss of generality that $\abs{\lambda}>\abs{\gamma}$. 
For the first part, we assume $m,k\geq 2$ and we'll show $\abs{\gamma\lambda}\geq\frac{1}{2}$.

First note that from \cite[Theorem~2.6]{shmerkin_sol2000}, we must have $\abs{\gamma}\geq \alpha_2 > 0.668$. Indeed, if $\gamma>0$ it follows directly, for $\gamma<0$ replace $f(x)=1+\sum_{n=1}^\infty a_nx^n$ with $h(x)=1+\sum_{n=1}^\infty a_{2n}x^{2n} - \sum_{n=1}^\infty a_{2n-1}x^{2n-1}$ and get $h(-\gamma)=h'(-\gamma)=0$, which implies $\abs{\gamma}=-\gamma\geq\alpha_2$ as well. 

We recall from \cite[Theorem~2]{beaucoup} or \cite[Theorem~2.4]{shmerkin2006overlapping} that the product of the magnitudes of all $n$ (possibly complex) zeros $\beta_i$ of $f$ in a neighborhood of zero, counted with multiplicity, must satisfy 
$$
\prod_{i=0}^{n}\abs{\beta_i}\geq\left(1+\frac{1}{n}\right)^{-\frac{n}{2}}(1+n)^{-\frac{1}{2}}.
$$ Denote the right hand side by $C(n)$.
Let $n$ be the total number of zeros of $f$ in $\overline{B(0,\abs{\lambda})}$ (a closed disk in the complex plane), counting with multiplicity (including conjugate complex zeros as well). Then we must have
$$
\abs{\lambda}^n\geq \prod_{i=1}^{n}\abs{\beta_i} \geq C(n).
$$
Therefore
$$
\abs{\lambda}\geq C(n)^{\frac{1}{n}}.
$$
Assume, for the sake of contradiction that $\abs{\gamma\lambda}<\frac{1}{2}$. 
Denote
$$
D(x)=C(x)^{\frac{1}{x}}=\left(\left(1+\frac{1}{x}\right)\cdot (1+x)^{\frac{1}{x}}\right)^{-\frac{1}{2}}.
$$
For positive $x$, the expression $1+\frac{1}{x}$ is decreasing, and so is $(1+x)^{\frac{1}{x}}$ (this can be checked using the derivative). Thus, $D$ must be increasing. 

If $n\geq 5$, we get
$$\abs{\lambda}\geq D(n)\geq D(5)=C(5)^{\frac{1}{5}}.$$
This implies
$\abs{\gamma}<\frac{1}{2\abs{\lambda}}\leq\frac{1}{2C(5)^{\frac{1}{5}}}\approx 0.655$, contradicting $\abs{\gamma}\geq\alpha_2$. Hence the non-triviality assumption was wrong, i.e.\ $\abs{\gamma\lambda}\geq\frac{1}{2}$.

If $n=4$, we must have $m=k=2$, and there are no other zeros of $f$ in $\overline{B(0,\abs{\lambda})}$. Therefore
$$
\abs{\gamma\lambda}^2= \prod_{i=1}^{n}\abs{\beta_i} \geq C(4)\approx 0.286 >\frac{1}{4}.
$$
Therefore we must have
$$
\abs{\gamma\lambda}>\frac{1}{2}
$$
contradicting non-triviality again.

For the second part, either both zeros are simple, or exactly one of them is simple. Assume, for the sake of contradiction, that $\lambda$ is the simple zero (that is, $m=1$) and $\gamma$ is of order $k$ for some $k\geq 2$. In particular, $\gamma\geq 0.668$.
We assume $(\gamma,\lambda)$ is not trivial, i.e.\ $\gamma\lambda<\frac{1}{2}$.

Hence, we find $\lambda\leq 0.5/0.668\approx 0.7485$.
We follow the notation in \cite[Proposition~2.5]{N_general} along with the associated table: the function $\psi$ describes a lower bound to the third smallest zero (counting with multiplicity) of any $f\in\B$.
We know $\gamma<\frac{1}{\sqrt{2}}\approx 0.707<\alpha_3\approx 0.7278$, therefore $\psi(\gamma)$ is well defined.

$\psi$ is decreasing, and we know $\psi(\gamma)\leq \lambda$. Therefore $\psi(\gamma)\leq 0.7485$ which implies $\gamma\geq\psi^{-1}(0.7485)\geq 0.67$ (based on the table). 
Repeating the argument, we find $\lambda\leq0.5/0.67\approx 0.7463$ and therefore $\gamma\geq \psi^{-1}(0.7463)\geq 0.69$ (based on the table).
Repeating again, $\lambda\leq0.5/0.69\approx 0.7246$. Since $0.7246<\alpha_3\approx 0.7278$, this is a contradiction, as $\psi(\gamma)\geq \alpha_3$ by the definition of $\psi$.

Thus, any such point is in the trivial region of $\N$.
\end{proof}

Therefore, for any non-trivial purpose, we need to consider only points where one of the zeros is simple.

The proof of regular closedness is comprised of three main parts: demonstrating the non-convexity of the attractor, applying the Surjective Perturbation Lemma to a subset of $\N$ (and characterizing this subset), and describing normalized translation vectors for nearby parameters.

\subsection{Convex Hull of the Attractor for $\N$}
\begin{theorem}\label{thm:convex_hull_opp}
	Assume $\gamma<0<\lambda$ and $\abs{\gamma}<\abs{\lambda}$. Then the convex hull of $\A$ is described as follows:
\begin{figure}[h]
	\centering
	\fbox{\includegraphics[scale=0.7]{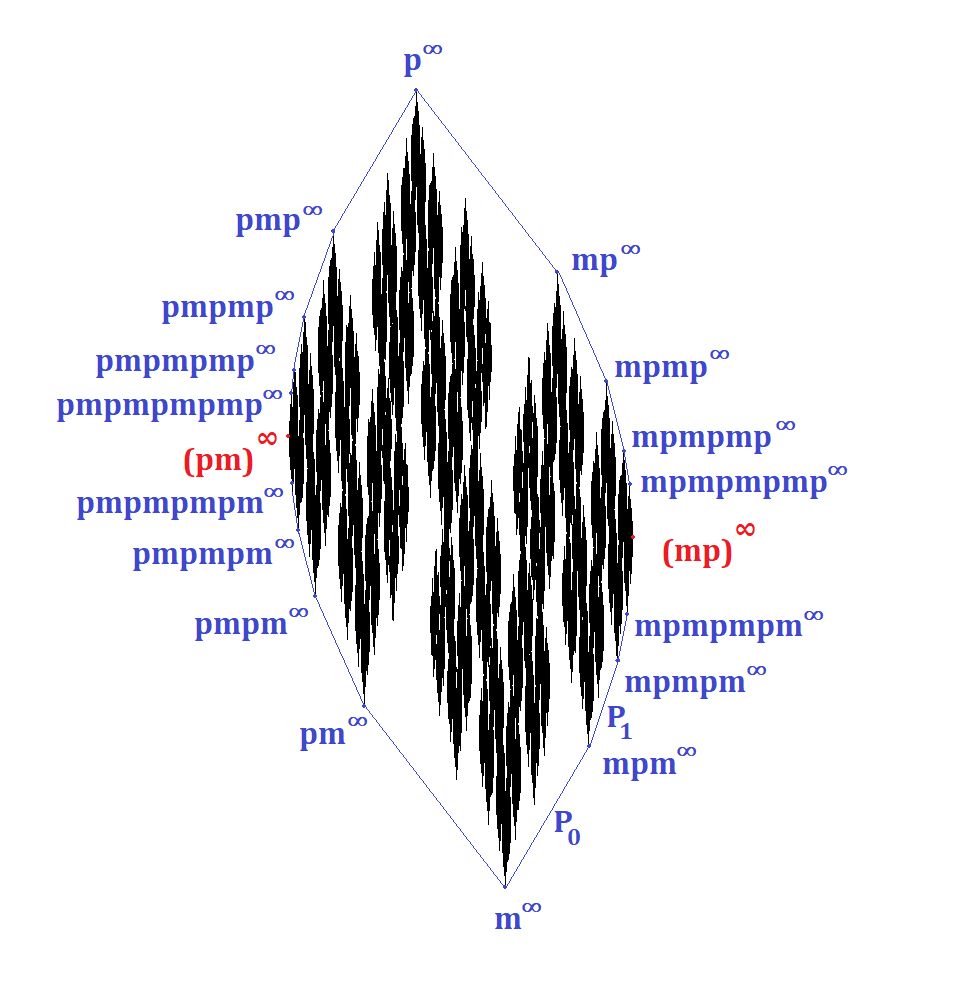}}
	\caption{$\A_{(-\frac{10}{17},\frac{10}{13})}$ together with the vertices and edges of the convex hull}
	\label{fig:convex_hull_opp_signs}
\end{figure}
\begin{enumerate}
	\item The rightmost vertex is $\pi((mp)^\infty)$. 
	\item The leftmost vertex is $\pi((pm)^\infty)$.
	\item All the other vertices are given by $$A_k=\pi((mp)^km^\infty), B_k=\pi((pm)^km^\infty), C_k=\pi((pm)^kp^\infty), D_k=\pi((mp)^kp^\infty),$$ where $k=0,1,2,\dots$. Their relative order is represented in Figure \ref{fig:convex_hull_opp_signs}.
\end{enumerate}

\end{theorem}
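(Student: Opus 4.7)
The plan is to characterize every vertex of $\mathrm{conv}(\A)$ by the direction in which it is extremal. By Lemma \ref{thm:att_linear} and the normalization of Section \ref{sec:notation}, each point of $\A$ is parameterized by a sequence $\epsilon=(\epsilon_n)_{n\ge 0}\in\{-1,+1\}^{\NN}$ via
$(x,y)=\bigl(\sum_{n\ge 0}\epsilon_n\gamma^n,\;\sum_{n\ge 0}\epsilon_n\lambda^n\bigr)$,
where $\epsilon_n$ is determined by the $(n+1)$-st letter of the address word ($p\mapsto +1$, $m\mapsto -1$). For a unit direction $(\cos\theta,\sin\theta)$ the point of $\A$ extremal in that direction maximizes
\[
x\cos\theta+y\sin\theta \;=\; \sum_{n\ge 0}\epsilon_n c_n(\theta),\qquad c_n(\theta)\;:=\;\gamma^n\cos\theta+\lambda^n\sin\theta,
\]
and because the summands decouple, the maximizer has $\epsilon_n=\mathrm{sign}(c_n(\theta))$ whenever $c_n(\theta)\neq 0$. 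Hence every vertex of $\mathrm{conv}(\A)$ corresponds to a constant sign-pattern of $\{c_n(\theta)\}$ on an arc of directions, and new vertices can only appear at critical angles where some $c_n$ vanishes.

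I would then locate these critical angles: $c_n(\theta)=0$ iff $\tan\theta=-(\gamma/\lambda)^n$. Setting $r:=\gamma/\lambda\in(-1,0)$, the right-hand side has sign $(-1)^{n+1}$ and magnitude $|r|^n\to 0$, so the critical angles accumulate only at $\theta\in\{0,\pi\}$ and split by parity between the four open quarter-arcs of the unit circle. On each quarter-arc the critical angles are strictly monotone in $n$, so crossing one flips exactly one $\epsilon_n$, and the vertices appear one at a time as $\theta$ varies.

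Starting from $\theta=\pi/2$ every $c_n(\theta)>0$ (since $\lambda^n>0$), so the extremal sequence is $p^\infty$. Decreasing $\theta$ toward $0$ encounters the critical angles from odd $n=1,3,5,\ldots$ in decreasing order, flipping $\epsilon_1,\epsilon_3,\ldots$ from $+1$ to $-1$ and producing the successive vertices $(pm)^k p^\infty$ for $k=0,1,2,\ldots$, with horizontal limit $\pi((pm)^\infty)$. Three symmetric sweeps over the remaining quarter-arcs produce the families $(mp)^kp^\infty$, $(pm)^km^\infty$, $(mp)^km^\infty$ and the opposite horizontal limit $\pi((mp)^\infty)$. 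Matching these four families to the labels $A_k,B_k,C_k,D_k$ of the statement, and reading the cyclic order of the associated arcs of directions around the unit circle, yields the ordering shown in Figure \ref{fig:convex_hull_opp_signs}.

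The main obstacle I expect is careful bookkeeping: the parity of $n$ interacts with the signs of both $\gamma^n$ and $\lambda^n$, and the offset between the index $n$ of the power-series term and the position of the corresponding letter in the word is easy to miscount. A minor secondary point is verifying that no other vertices arise; this reduces to the strict monotonicity of $|r|^n$, which guarantees that the angles listed above exhaust the zeros of $\{c_n\}$.
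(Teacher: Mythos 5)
Your proof is correct, but it takes a genuinely different route from the paper. The paper's proof follows Hare--Sidorov: it guesses the candidate vertices $A_k,B_k,C_k,D_k$, uses the central symmetry to reduce to the bottom half, and then proves by induction on $k$ that every point of $\A$ lies strictly above the segment $P_k$ joining $A_k$ to $A_{k+1}$, by an explicit comparison of the slope $\lambda^{2k+2}/\gamma^{2k+2}$ of $P_k$ against the slope of the chord from $A_k$ to an arbitrary $\pi(w)$. You instead work dually via the support function: for each direction $\theta$ the maximizer of $\langle\pi(w),(\cos\theta,\sin\theta)\rangle$ decouples coordinatewise to $\epsilon_n=\operatorname{sign}\bigl(c_n(\theta)\bigr)$, the critical angles $\tan\theta=-(\gamma/\lambda)^n$ are pairwise distinct (since $|\gamma/\lambda|<1$), alternate in parity between the four quarter-arcs, and accumulate only at $\theta\in\{0,\pi\}$, so sweeping $\theta$ enumerates the vertices and the edges simultaneously. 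Your method is arguably cleaner and more systematic: it produces the cyclic ordering of the vertices, the edges, and the completeness of the list ``for free'' from the monotonicity of $|\gamma/\lambda|^n$, whereas the paper's inductive slope argument must verify these features separately. You correctly flag the one genuine hazard: the paper's proof locally redefines $\pi(w)=\bigl(\sum_{i\ge1}w_i\gamma^i,\sum_{i\ge1}w_i\lambda^i\bigr)$ (index starting at $1$), whereas the standard address map has $\pi(w)=\sum_{n\ge 0}\epsilon_n T^n s_p$ with $\epsilon_n=w_{n+1}$; your $0$-based bookkeeping therefore lands on $\pi((pm)^\infty)$ for the rightmost vertex where the statement writes $\pi((mp)^\infty)$, which is exactly the off-by-one you warned about, not a mathematical error. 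To splice your argument into this paper, adopt the $i\ge 1$ indexing of the proof and re-derive the sign patterns accordingly.
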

\begin{proof}
We fix some notation:
For any word $w$ formed from the alphabet $\{m,p\}$, write $g_w(x)=\sum_{i=1}^\infty w_ix^i$ and $\pi(w)=(g_w(\gamma),g_w(\lambda))$ where $w_i=1$ is represented by $p$ ("plus"), and $w_i=-1$ by $m$ ("minus").

The top corner is attained by maximizing $g_w(\lambda)$. Since $\lambda$ is positive, the maximal value is for all coefficients being $+1$.

The right corner is attained by maximizing $g_w(\gamma)$. Since $\gamma$ is negative,
the maximal value is for all even coefficients being $+1$ and all odd coefficients being $-1$.

Due to the symmetry of the attractor around zero, that is, $(x,y)\in \A$ iff $(-x,-y)\in\A$, the characterization of the top vertices would follow from the bottom ones.

The method of proof here is similar to \cite[Theorem~2.1]{hare_sidorov_2017}:
We denote the line segment connecting $A_k,A_{k+1}$ by $P_k$.
First, we prove all points of the attractor are strictly above the line segment connecting $A_0=\pi(m^\infty)$ and $A_1=\pi(mpm^\infty)$ except these two points.

The point with minimal $y$-coordinate in the attractor is ${A_0=\pi(m^\infty)}$. Therefore any point $\pi(w)$ is to the right of $A$ iff the slope of the line segment connecting it with $A$ is positive. We denote this slope by $s_w$.

A useful fact we will use repeatedly: if we write the coordinates of ${\pi(w)-\pi(m^\infty)}$ as $(a,b)$ then $b\geq a$ with equality only for $w=m^\infty$. Indeed, by definition $$a=\sum_{i=1}^{\infty}(w_i+1)\gamma^i$$ and $$b=\sum_{i=1}^{\infty}(w_i+1)\lambda^i.$$
These are geometric sums with non-negative coefficients (as $w_i+1\in\{0,2\}$) and $\abs{\gamma}<\lambda$. The only case of equality would be if all coefficients were $0$, i.e.\ if $w=m^\infty$.

We will prove the inequality $s_w>\frac{\lambda^2}{\gamma^2}$ for all words $w$ which are to the right of $A_0$. Indeed, write $w=w_1w_2w'$ and the slope is
$$
s_{w_1w_2w'}=\frac{(w_1+1)\lambda+(w_2+1)\lambda^2+\lambda^2b}{(w_1+1)\gamma+(w_2+1)\gamma^2+\gamma^2a}
$$
where $(a,b)$ are the coordinates of $\pi(w')-\pi(m^\infty)$. The inequality $s_w>s_{mpm^\infty}$ becomes
$$
\frac{(w_1+1)\lambda+(w_2+1)\lambda^2+\lambda^2b}{(w_1+1)\gamma+(w_2+1)\gamma^2+\gamma^2a}>\frac{\lambda^2}{\gamma^2}
$$
as we assume the slopes are all positive, we can multiply by the denominators without changing the direction of the inequality. We cancel the term ${(w_2+1)\lambda^2\gamma^2}$ which appears on both sides, and the inequality becomes
$$
(w_1+1)\gamma^2\lambda+\gamma^2\lambda^2 b >(w_1+1)\gamma\lambda^2+\gamma^2\lambda^2 a.
$$
But $b\geq a$ with equality only for $w'=m^\infty$, therefore we obtain the inequality $\gamma^2\lambda^2 a\geq \gamma^2\lambda^2 b$, with equality only for $w'=m^\infty$.
Also, $w_1+1\in\{0,2\}$ and $\lambda>0>\gamma$ therefore $(w_1+1)\gamma^2\lambda\geq (w_1+1)\gamma\lambda^2$ with equality only for $w_1=-1$.

Therefore this inequality is satisfied, unless $w'=m^\infty$ and $w_1=-1$. We assumed $w\neq m^\infty$, therefore the only possible exception must also satisfy ${w_2=1}$, i.e. $w=mpm^\infty$ as required.

Second, we claim all points in the attractor are above the line segment $P_k$ connecting $A_k,A_{k+1}$ for all $k$.
The slope of this line is 
$
\frac{\lambda^{2k+2}}{\gamma^{2k+2}}
$
which is increasing in $k$. We will prove this by induction on $k$:

For $k=0$, we already proved it. 

Consider a word not equal to either $(mp)^km^\infty$ or $(mp)^{k+1}m^\infty$. We may assume without loss of generality that $\pi(w)$ is to the right of $P_k$. Otherwise, there are two cases: First, if the point is to the left of $A_0$, in which case it is in a quadrant (relative to $A_0$) directly above $P_k$ for all $k$. Second, the point is to the right of $A_0$, in which case there exists $k'<k$ such that $\pi(w)$ is to the right of $A_{k'}$ and to the left of $A_{k'+1}$. By induction, $\pi(w)$ will be above the line $P_{k'}$. But the slopes are increasing, therefore $\pi(w)$ is also above $P_k$.

Therefore we may assume $\pi(w)$ is to the right of $A_k$. Write $w=w_1w_2\dots w_{2k+2}w'$. We will prove the slope of the line segment connecting $A_k,\pi(w)$ is larger than that of $P_k$, which imply the result. This is the inequality
$$
\frac{\sum_{i=1}^{k+1}(w_{2i-1}+1)\lambda^{2i-1}+\sum_{i=1}^{k}(w_{2i}-1)\lambda^{2i}+(w_{2k+2}-1)\lambda^{2k+2}+\lambda^{2k+2}b}{\sum_{i=1}^{k+1}(w_{2i-1}+1)\gamma^{2i-1}+\sum_{i=1}^{k}(w_{2i}-1)\gamma^{2i}+(w_{2k+2}-1)\gamma^{2k+2}+\gamma^{2k+2}a}>\frac{\lambda^{2k+2}}{\gamma^{2k+2}}
$$
where $(a,b)$ are the coordinates of the vector $\pi(w')-\pi(m^\infty)$. Due to the assumptions, both denominators are positive. We can multiply by them without changing the direction of the inequality. We cancel the term $(w_{2k+2}-1)\gamma^{2k+2}\lambda^{2k+2}$ which appears on both sides, and the inequality becomes:
\begin{eqnarray*}
&\sum_{i=1}^{k+1}(w_{2i-1}+1)\lambda^{2i-1}\gamma^{2k+2}+\sum_{i=1}^{k}(w_{2i}-1)\lambda^{2i}\gamma^{2k+2}+\gamma^{2k+2}\lambda^{2k+2}b \ &> \\
&\sum_{i=1}^{k+1}(w_{2i-1}+1)\gamma^{2i-1}\lambda^{2k+2}+\sum_{i=1}^{k}(w_{2i}-1)\gamma^{2i}\lambda^{2k+2}+\gamma^{2k+2}\lambda^{2k+2}a &
\end{eqnarray*}

Similar to the base case, we find that $$\gamma^{2k+2}\lambda^{2k+2}b\geq \gamma^{2k+2}\lambda^{2k+2}a$$ with equality only for $w'=m^\infty$.

Because $\abs{\gamma}<\abs{\lambda}$, for all $1\leq i\leq k$ we find that
$$
(w_{2i}-1)\lambda^{2i}\gamma^{2k+2}\geq (w_{2i}-1)\gamma^{2i}\lambda^{2k+2}
$$
with equality only for $w_{2i}=1$.

Similarly, for all $1\leq i\leq k+1$ we find that
$$
(w_{2i-1}+1)\lambda^{2i-1}\gamma^{2k+2} \geq
(w_{2i-1}+1)\gamma^{2i-1}\lambda^{2k+2}
$$
with equality only for $w_{2i-1}=-1$.

Therefore the desired inequality is satisfied, unless all the above cases are equalities. But this implies $w=(mp)^km^\infty$ or $w=(mp)^{k+1}m^\infty$ (as all coefficients of $w$ are determined except for $w_{2k+2}$ which canceled out).

This finishes the proof of the vertices to the right of $A_0$. The vertices to the left of $B_0$ are characterized in exactly the same manner, with all inequalities reversed as the slopes are 
$\frac{\lambda^{2k+1}}{\gamma^{2k+1}}$
which are negative and decreasing.
\end{proof}
\begin{corollary}\label{thm:non_convexity_of_attractor}
	If $(\gamma,\lambda)\in \N\setminus(\N_t\cup \text{Diag}((-1,1)))$ then the attractor $\A_{(\gamma,\lambda)}$ is not convex.
\end{corollary}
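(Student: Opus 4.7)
The plan is to derive a contradiction from the hypothesis that $\A$ is convex, by combining a Lebesgue-measure count (which rules out affine dimension $2$) with an explicit three-point argument (which rules out affine dimension $1$).

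First note that the IFS $\{Tx+s_p,Tx-s_p\}$ is symmetric under the involution $x\mapsto -x$, so the attractor satisfies $\A=-\A$; in particular it contains the antipodal pair $p^{\ast}:=\pi(p^\infty)=(1/(1-\gamma),1/(1-\lambda))$ and $-p^{\ast}=\pi(m^\infty)$, which are distinct since $p^{\ast}\neq 0$. Assume for contradiction that $\A$ is convex. As a compact convex subset of $\R^2$ it is either a point, a line segment, or a two-dimensional convex body; the singleton case is ruled out by the two distinct points just exhibited.

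The two-dimensional case I would rule out by the estimate $|\A|\le|f\A|+|g\A|=2|\gamma\lambda|\,|\A|$, combined with the hypothesis $(\gamma,\lambda)\notin\N_t$, i.e.\ $|\gamma\lambda|<1/2$, which forces $|\A|=0$. Hence $\A$ must be a line segment, and the central symmetry together with $\{\pm p^{\ast}\}\subset\A$ pins this segment down to the unique line $\ell$ through the origin in the direction of $p^{\ast}$.

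To finish I would exhibit a third attractor point lying off $\ell$. The natural candidate is $q:=\pi(pm^\infty)=(I-2T)(I-T)^{-1}s_p=\bigl((1-2\gamma)/(1-\gamma),(1-2\lambda)/(1-\lambda)\bigr)$. The line $\ell$ has slope $(1-\gamma)/(1-\lambda)$, while the slope from the origin to $q$ equals $(1-2\lambda)(1-\gamma)/((1-\lambda)(1-2\gamma))$; cross-multiplying the equality of these two slopes collapses to $\gamma=\lambda$, which is excluded by the hypothesis $(\gamma,\lambda)\notin\text{Diag}((-1,1))$. Hence $q\notin\ell$, contradicting $q\in\A\subset\ell$.

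The only friction is the degenerate sub-case $\gamma=1/2$ (or, symmetrically, $\lambda=1/2$), where a denominator vanishes and the slope is ill-defined; there, $q\in\ell$ together with $q$ having a vanishing first coordinate forces $q=0$ and hence $\gamma=\lambda=1/2$, again the excluded diagonal. An alternative route in the opposite-signs regime would appeal to Theorem \ref{thm:convex_hull_opp} directly, since its proof already shows that each open edge $(A_k,A_{k+1})$ of $\text{conv}(\A)$ is disjoint from $\A$ while its endpoints belong to $\A$, so $\A$ cannot coincide with its convex hull; but the measure-plus-slope argument above has the advantage of treating every sign pattern uniformly.
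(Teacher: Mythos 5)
Your argument is correct, and it takes a genuinely different—and in some ways cleaner—route than the paper. The paper proves non-convexity by explicitly characterizing the convex hull of $\A$ and exhibiting an open edge of that hull disjoint from $\A$; this requires a case split on the signs of $\gamma,\lambda$, appealing to Theorem~\ref{thm:convex_hull_opp} for opposite signs and to Hare--Sidorov (Theorem~2.1 of \cite{hare_sidorov_2017}) for equal signs, plus a side observation that the anti-diagonal lies in $\N_t$. Your approach avoids all of that: the measure bound $|\A|\le|f\A|+|g\A|=2|\gamma\lambda|\,|\A|$, combined with $|\gamma\lambda|<1/2$, kills the two-dimensional convex case outright, and then the three explicit attractor points $\pm\pi(p^\infty)$ and $\pi(pm^\infty)$ are non-collinear precisely when $\gamma\neq\lambda$ (the computation $t=1-2\gamma=1-2\lambda$ is the cleanest way to see this and sidesteps the $\gamma=\tfrac12$ degeneracy you flag), killing the segment case. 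This treats every sign pattern uniformly, needs neither the convex hull description nor the anti-diagonal remark, and in fact doesn't even use the hypothesis $(\gamma,\lambda)\in\N$—only $|\gamma\lambda|<1/2$ and $\gamma\neq\lambda$. What the paper's approach buys in exchange is the convex hull description itself, which is reused elsewhere (Lemma~\ref{thm:trap_like_verbose} and the cusp-corner discussion in Section~\ref{sec:questions}); your proof establishes only the corollary. Both are valid; yours is the more elementary and self-contained path to this particular statement.
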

\begin{proof}
If $\gamma,\lambda$ have the same sign, then this result follows from \cite[Theorem~2.1]{hare_sidorov_2017}, which characterizes the convex hull and its vertices. In particular, all points of the attractor are above the line segment connecting $m^\infty$ and $pm^\infty$ (except these two themselves).

Assume without loss of generality, that $\gamma<0<\lambda$. If not, replace $(\gamma,\lambda)$ with $(\lambda,\gamma)$. We can similarly assume $\abs{\gamma}<\abs{\lambda}$; indeed, first note that in the non-trivial region of $\N$ there are no points with $\gamma=-\lambda$. Next, if $\abs{\gamma}>\abs{\lambda}$, replace $(\gamma,\lambda)$ with $(-\lambda,-\gamma)$.

Therefore the convex hull from Theorem \ref{thm:convex_hull_opp} applies. In particular, there are no points in the attractor on the line segment joining $\pi(m^\infty)$ and $\pi(mpm^\infty)$ except for these two points themselves. Thus, the attractor is not convex.
\end{proof}

\subsection{$\R^2$ Version of the Surjective Perturbation Lemma}

The Surjective Perturbation Lemma in \cite{calegari} is a key result enabling finding parameters for which a trap exists. This allowed the authors of \cite{calegari} to prove regular closedness for $\M$.
Unfortunately, this lemma does not hold in $\N$ in general, which reduces the scope of our main result. The version that does hold is partial and will be described below.

\begin{definition}\label{def:N_prime}
	Let $\N'$ denote the set of all parameters $(\gamma,\lambda)\in \mathbb{R}^2$ for which there exists a power series $f\in\B$ such that $f(\lambda)=f(\gamma)=0$, and both $\gamma$ and $\lambda$ are sign changes for $f$.
\end{definition}
\begin{definition}\label{def:property_U}
	Let $f:[a,b]\to\R$ be a real power series that has an isolated zero $\lambda\in(a,b)$. Let $f_n$ be the truncation of $f$ to an $n$-degree polynomial. We say $f$ satisfies property U (unique) around $\lambda$, if for any sufficiently small $\epsilon>0$, there exists an $N$ such that for all $n>N$, $f_n$ has a unique zero $\lambda_n$ in $B(\lambda,\epsilon)$, and this neighborhood contains no critical points of $f_n$ except possibly $\lambda_n$ itself.
\end{definition}
\begin{definition}\label{def:N_u}
	Let $\N'_U$ be the set of all parameters $(\gamma,\lambda)\in\N'$ such that there exists $f\in\B$ that satisfies $f(\gamma)=f(\lambda)=0$ and $f$ satisfies property U around both $\gamma$ and $\lambda$.
\end{definition}

\begin{lemma}[Surjective Perturbation for $\N'_U$]\label{thm:surjective_perturbation_N}
	Let $(\gamma_0,\lambda_0)\in {\mdiags{\N'_U}}$ and let $f\in\B$ as in Definition \ref{def:N_u}. Suppose $u,v$ are distinct infinite words such that $f$ is induced from $u-v$. In particular $\pi(u,(\gamma_0,\lambda_0))=\pi(v, (\gamma_0,\lambda_0))$. 
	Let $f_n$ be the truncations of $f$, then by abuse of notation, it is also the translation vector of cylinder sets of $u,v$:
	$$
	u^n\A_{(\gamma_0,\lambda_0)}=v^n\A_{(\gamma_0,\lambda_0)} +f_n(\gamma_0,\lambda_0)
	$$
	(where $u^n,v^n$ are the $n$-prefixes of the words). With the notation in Section \ref{self_affine} we know $f_n(\gamma,\lambda)=(s_{v^n}-s_{u^n})(\gamma,\lambda)$.
	Then for any sufficiently small $\epsilon>0$ there is $\delta>0$ (independent of $n$) and an integer $N$ such that for any $n\geq N$: $$f_n(B((\gamma_0,\lambda_0),\epsilon))\supseteq B(0,\delta).$$
	
\end{lemma}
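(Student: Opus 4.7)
The key observation is that $F_n$ is a \emph{product} map: since $T$ is diagonal (with entries $\gamma$ and $\lambda$), each coordinate of $s_{v^n}(\gamma,\lambda) - s_{u^n}(\gamma,\lambda)$ is the same polynomial $f_n$ evaluated at the corresponding diagonal entry, giving $F_n(\gamma,\lambda) = 2\bigl(f_n(\gamma), f_n(\lambda)\bigr)$. The factor of $2$ is irrelevant for surjectivity onto a ball, so I will drop it. Surjectivity onto $B(0,\delta)$ therefore reduces to two independent one-variable intermediate value theorem statements.

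First I would fix $\epsilon > 0$ small enough that $\gamma_0$ and $\lambda_0$ are the unique zeros of $f$ in their respective closed $\epsilon$-intervals---possible because a sign change is automatically an isolated zero of the real analytic function $f$. The sign change hypothesis then forces $f(\gamma_0 - \epsilon/\sqrt{2})$ and $f(\gamma_0 + \epsilon/\sqrt{2})$ to have opposite signs, and likewise for $\lambda_0$, so
\[
c := \min\bigl(|f(\gamma_0 \pm \epsilon/\sqrt{2})|,\ |f(\lambda_0 \pm \epsilon/\sqrt{2})|\bigr) > 0.
\]

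Next I would invoke uniform convergence: the elementary bound $|f(x) - f_n(x)| \leq |x|^{n+1}/(1-|x|)$, valid for every $f \in \B$ and $|x|<1$, shows $f_n \to f$ uniformly on compact subsets of $(-1,1)$, uniformly over $\B$ (this essentially restates Lemma \ref{thm:B_compactness}). Choose $N$ so that $|f_n - f| < c/2$ at the four points $\gamma_0 \pm \epsilon/\sqrt{2}$ and $\lambda_0 \pm \epsilon/\sqrt{2}$ whenever $n \geq N$. Consequently $f_n(\gamma_0 - \epsilon/\sqrt{2})$ and $f_n(\gamma_0 + \epsilon/\sqrt{2})$ still have opposite signs with magnitudes $\geq c/2$, and the intermediate value theorem applied to the continuous $f_n$ on $[\gamma_0 - \epsilon/\sqrt 2, \gamma_0 + \epsilon/\sqrt 2]$ yields an image containing $[-c/2, c/2]$; an analogous conclusion holds for $\lambda_0$. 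The product structure then gives
\[
F_n\bigl([\gamma_0 - \epsilon/\sqrt 2, \gamma_0 + \epsilon/\sqrt 2] \times [\lambda_0 - \epsilon/\sqrt 2, \lambda_0 + \epsilon/\sqrt 2]\bigr) \supseteq [-c/2, c/2]^2 \supseteq B(0, c/2).
\]
The closed square has corners at distance exactly $\epsilon$ from $(\gamma_0,\lambda_0)$, so a harmless replacement of $\epsilon$ by $(1-\eta)\epsilon$ places it strictly inside $B((\gamma_0,\lambda_0),\epsilon)$; taking $\delta = c/2$---which depends only on $f$ and $\epsilon$ and not on $n$---finishes the argument.

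The subtle point worth flagging is that property U, baked into the definition of $\N'_U$, is not actually needed for this statement: the sign change hypothesis alone suffices, because the continuous image of an interval is an interval. Property U plays its role downstream, in passing from this surjective perturbation result to the construction of actual traps (the proof of Theorem \ref{thm:calegari_analog}), where one needs uniqueness of the zeros $\gamma_n, \lambda_n$ of $f_n$ and monotonicity of $f_n$ around them in order to match normalized translation vectors with trap-like vectors in a continuous way.
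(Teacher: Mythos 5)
Your proof is correct, and the core mechanism is the same as the paper's: exploit the sign changes at $\gamma_0$ and $\lambda_0$, use uniform convergence of $f_n$ to $f$ to preserve the endpoint signs, and apply the intermediate value theorem coordinate-wise via the product structure $F_n(\gamma,\lambda) = \text{const}\cdot(f_n(\gamma),f_n(\lambda))$. Where you genuinely diverge is in dropping the reliance on property U. The paper's proof invokes property U to get an interval on which $f_n$ is monotone, and then invokes Lemma~\ref{thm:case_12_tail}(1) separately to handle $w_1 = 0$. You observe, correctly, that monotonicity is superfluous here: IVT on the endpoints already gives that the image of the interval contains $[-c/2,c/2]$, including $0$. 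This is a real simplification, and your remark about where property U is actually indispensable is accurate---it is used in Lemma~\ref{thm:surj_for_trap_like}, where injectivity of $f_M$ on a neighborhood of each zero is needed to define the continuous map $g$ fed into Brouwer's fixed point theorem; the surjective perturbation statement itself needs only the sign-change condition defining $\N'$. One minor point worth double-checking in your write-up: you treat the scalar factor as exactly $2$, but with the paper's normalization it is $\pm 2\,T^{-1}$ applied to a shifted index (the polynomial $s_{v^n}-s_{u^n}$ has degree $n-1$, not $n$); this affects nothing since you only need that each coordinate is some nonzero multiple of $f_n$ evaluated at the corresponding parameter, but the constant is not literally $2$.
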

For the proof we need some auxiliary results:
\subsubsection{Auxiliary results required}
These are basic facts from analysis, which we need for the proof. The proofs are technical and therefore are left to Section \ref{sec:aux_proofs}.

\begin{lemma}\label{thm:B_completeness}
	If $f_n\in\B$ has a uniform limit $f$, then $f\in \B$.
\end{lemma}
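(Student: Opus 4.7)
The plan is to recover the limit's coefficients from the coefficients of the $f_n$ by a diagonal compactness argument, and then match them against $f$ via uniqueness of power series expansions. Write $f_n(x) = 1 + \sum_{k=1}^\infty a_k^{(n)} x^k$ with $a_k^{(n)} \in \{-1,0,1\}$. Since $\{-1,0,1\}^{\mathbb{N}}$ is compact in the product topology (equivalently, since $\{-1,0,1\}$ is finite, a diagonal extraction works), there is a subsequence $f_{n_j}$ such that for every $k$ the sequence $a_k^{(n_j)}$ is eventually constant, say equal to $a_k \in \{-1,0,1\}$. Define $g(x) = 1 + \sum_{k=1}^\infty a_k x^k$, which is a well-defined element of $\B$ since the coefficients are bounded by $1$.

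Next I would verify that $f_{n_j} \to g$ pointwise on $(-1,1)$. Fix $x$ with $|x| \leq r < 1$ and $\varepsilon > 0$. Choose $K$ so large that $r^{K+1}/(1-r) < \varepsilon/3$; this bounds the tail $\sum_{k>K} a_k^{(n_j)} x^k$ and $\sum_{k>K} a_k x^k$ uniformly. For $j$ large enough, $a_k^{(n_j)} = a_k$ for all $k \leq K$, so the partial sums up to degree $K$ agree. Combining these estimates gives $|f_{n_j}(x) - g(x)| < \varepsilon$ for $j$ large.

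To close the argument, I would use that uniform convergence of $f_n \to f$ on the relevant set (which, per the convention behind Lemma~\ref{thm:B_compactness}, includes at least a nontrivial subset of $(-1,1)$ — e.g.\ any compact subinterval) forces $f_{n_j}(x) \to f(x)$ at those points as well. Therefore $f(x) = g(x)$ on a set with accumulation points inside $(-1,1)$, and by the identity principle for power series (both $f$ and $g$ are analytic on $(-1,1)$, since their coefficients are bounded by $1$) we conclude $f = g$ on all of $(-1,1)$. Hence $f = g \in \B$, as desired.

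The only place requiring care is harmonizing the mode of convergence assumed in the hypothesis with the pointwise coefficient argument; the diagonal extraction itself and the tail estimate are routine, so the main (minor) obstacle is simply pinning down the domain of uniform convergence precisely enough to invoke uniqueness of the analytic limit. Once that is stated, the rest is bookkeeping.
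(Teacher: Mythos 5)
Your proof is correct, but it takes a genuinely different route from the paper's. The paper argues directly: uniform convergence of analytic functions on compact subsets of the disk implies uniform convergence of all derivatives (Weierstrass), hence $f_n^{(k)}(0)\to f^{(k)}(0)$ for every $k$; since $f_n^{(k)}(0)\in\{-k!,0,k!\}$ is discrete, each coefficient sequence is eventually constant and the limiting coefficient lies in $\{-1,0,1\}$, so $f\in\B$ with no subsequence extraction needed. You instead avoid the derivative-convergence theorem entirely: you use sequential compactness of $\{-1,0,1\}^{\mathbb{N}}$ to extract a coefficientwise-stable subsequence, build a candidate $g\in\B$, verify $f_{n_j}\to g$ by a geometric tail estimate, and then identify $f=g$ via uniqueness of limits plus the identity principle. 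The paper's argument is shorter and recovers the full coefficient limit without passing to a subsequence; yours is more elementary in the sense of using only the compactness of the coefficient space and a direct series estimate, at the cost of the extra bookkeeping around the subsequence and the mode-of-convergence issue you flag at the end. Both are sound; the only thing worth tightening in your write-up is to state up front (as the paper implicitly does via Lemma~\ref{thm:B_compactness}) that "uniform limit" means uniform on a compact subinterval of $(-1,1)$ containing an accumulation point, so the identity-principle step is justified without hedging.
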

\begin{lemma}\label{thm:simple_is_U}
	Let $f$ be real analytic. If $\lambda$ is a simple zero of $f$, then it satisfies property U.
\end{lemma}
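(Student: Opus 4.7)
The plan is to prove Property U for a simple zero $\lambda$ by combining the non-vanishing of $f'(\lambda)$ with uniform convergence of $f_n$ and $f_n'$ on a compact neighborhood of $\lambda$. The main ingredients are continuity of $f'$, the standard fact that the termwise derivatives of a convergent power series converge uniformly on compact subsets of the disk of convergence, and the intermediate value theorem.

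First I would choose $\epsilon_0>0$ small enough that $B(\lambda,\epsilon_0)$ is compactly contained in the domain on which the power series for $f$ converges, and such that $|f'(x)|\geq c$ on $\overline{B(\lambda,\epsilon_0)}$ for some constant $c>0$; this is possible because $f'$ is continuous and $f'(\lambda)\neq 0$ by simplicity of the zero. I would then recall that $f_n\to f$ and $f_n'\to f'$ uniformly on $\overline{B(\lambda,\epsilon_0)}$, since differentiation commutes with passage to the limit inside the radius of convergence.

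Next, given any $\epsilon\in(0,\epsilon_0)$, I would pick $N$ large enough that for all $n>N$,
\[
\sup_{x\in \overline{B(\lambda,\epsilon_0)}}|f_n'(x)-f'(x)| < c/2, \qquad \sup_{x\in \overline{B(\lambda,\epsilon_0)}}|f_n(x)-f(x)| < \tfrac{1}{2}\min\{|f(\lambda-\epsilon)|,|f(\lambda+\epsilon)|\}.
\]
The first estimate gives $|f_n'(x)|\geq c/2>0$ on $\overline{B(\lambda,\epsilon_0)}$, so $f_n$ has no critical points in that neighborhood (in particular none in $B(\lambda,\epsilon)$), and is strictly monotonic there; this immediately delivers uniqueness of any zero of $f_n$ inside $B(\lambda,\epsilon)$. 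Note that for this to work I would first shrink $\epsilon$ if needed so that $f(\lambda\pm\epsilon)\neq 0$, which is possible because the simple zero is isolated.

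Finally, for existence, I would use that a simple zero is a sign change: $f(\lambda-\epsilon)$ and $f(\lambda+\epsilon)$ have opposite signs. The second estimate above forces $f_n(\lambda-\epsilon)$ and $f_n(\lambda+\epsilon)$ to keep those signs for $n>N$, so the intermediate value theorem provides a zero $\lambda_n\in(\lambda-\epsilon,\lambda+\epsilon)$, which is unique by the preceding monotonicity. This verifies Property U. The argument is essentially routine analysis; the only point that requires some care is ensuring $\epsilon$ is small enough to avoid the trivial obstruction $f(\lambda\pm\epsilon)=0$ and lies inside the neighborhood where $f'$ stays bounded away from zero, but both conditions are automatic once $\epsilon_0$ is fixed.
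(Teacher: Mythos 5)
Your proof is correct and takes a genuinely more elementary route than the paper's. The paper invokes Hurwitz's theorem on a complex neighborhood of $\lambda$ to obtain, for large $n$, exactly one zero of $f_n$ near $\lambda$ (which must be real by conjugate symmetry since $f$ has real coefficients), and then argues by contradiction that no critical points of $f_n$ can accumulate at $\lambda$, since $f_n'\rightrightarrows f'$ would force $f'(\lambda)=0$. You instead work entirely over the reals: you bound $|f'|\geq c>0$ on a closed neighborhood, use $f_n'\rightrightarrows f'$ to get $|f_n'|\geq c/2$ there (so $f_n$ is strictly monotone and has no critical points, giving uniqueness and the "no critical points" clause outright), and use the sign change of $f$ across $\lambda$ together with $f_n\rightrightarrows f$ and the intermediate value theorem to get existence. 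Your version buys elementarity and gives the slightly stronger conclusion that there are \emph{no} critical points at all in the small ball (rather than "none except possibly $\lambda_n$"), at the cost of being a bit longer; the paper's Hurwitz route is shorter and is the same tool used repeatedly elsewhere in Section 5, so it fits the paper's toolbox. Both are valid.
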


\begin{lemma}\label{thm:unqiue_implicit_is_continuous}
	Suppose $K\subseteq \R^k$ is closed and $M\subset \R^m$ be compact. Let $F:K\times M \to \R^n$ be continuous. Assume that for every $x\in K$ there exists a unique $y\in M$ such that $F(x,y)=0$. Denote the mapping $x\mapsto y$ by a function $g:K\to M$. Then $g$ is continuous.
\end{lemma}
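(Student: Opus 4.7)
The plan is to prove continuity of $g$ sequentially, exploiting compactness of $M$ together with the uniqueness hypothesis. Since $K$ and $M$ are metric spaces, sequential continuity suffices. Fix $x_0\in K$ and let $(x_n)\subseteq K$ be any sequence with $x_n\to x_0$; the goal is to show $g(x_n)\to g(x_0)$.

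The first step is to observe that the sequence $(g(x_n))$ lies entirely in the compact set $M$, so every subsequence has a further convergent subsequence. Suppose $g(x_{n_k})\to y^*$ for some $y^*\in M$. By joint continuity of $F$ on $K\times M$, we have
\[
F(x_{n_k},g(x_{n_k}))\;\longrightarrow\;F(x_0,y^*).
\]
But $F(x_{n_k},g(x_{n_k}))=0$ for every $k$ by the defining property of $g$, so passing to the limit forces $F(x_0,y^*)=0$. The uniqueness hypothesis at $x_0$ then pins down $y^*=g(x_0)$.

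The second step is the standard subsequence principle: if every convergent subsequence of a sequence lying in a compact metric space has the same limit $\ell$, then the whole sequence converges to $\ell$. Indeed, otherwise there would be an $\varepsilon>0$ and a subsequence $g(x_{n_j})$ bounded away from $g(x_0)$ by $\varepsilon$, which by compactness of $M$ has yet a further convergent subsequence; by the previous paragraph its limit must equal $g(x_0)$, contradicting the $\varepsilon$-separation. Applying this principle gives $g(x_n)\to g(x_0)$, establishing continuity of $g$ at $x_0$.

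There is really no substantive obstacle here — the whole argument is a two-line compactness plus uniqueness pattern. The only point that requires a moment of care is that one must not try to identify the limit of $(g(x_n))$ directly: a priori the sequence need not converge, so one first extracts a convergent subsequence, identifies its limit via continuity of $F$ and uniqueness, and only then upgrades to convergence of the full sequence via the subsequence principle. Note also that closedness of $K$ is used only to ensure $x_0\in K$ (so that $g(x_0)$ is defined); compactness of $M$ is the essential hypothesis.
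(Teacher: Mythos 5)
Your proof is correct and follows essentially the same route as the paper's: extract a convergent subsequence of $(g(x_n))$ by compactness of $M$, use continuity of $F$ and the uniqueness hypothesis to identify the limit as $g(x_0)$, and upgrade to full-sequence convergence. The only difference is cosmetic — the paper frames the final step as a proof by contradiction rather than invoking the subsequence principle directly.
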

\begin{lemma}\label{thm:not_U_lower_order}
	Let $f$ be non-constant real analytic with an isolated real zero $\lambda$ which is a sign change, of order $k$. Assume $f$ does not satisfy property U, then for a subsequence $n_m$, there are zeros $\lambda_{n_m}$ of $f_{n_m}$ such that $\lambda_{n_m}\to \lambda$, of odd order $k_m$ that is strictly smaller than $k$.
\end{lemma}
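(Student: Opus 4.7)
The plan is to combine Rouché's theorem on a complex disk around $\lambda$ with a parity count driven by the sign-change hypothesis, and then separately unpack the two ways property U can fail.

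First I would pick $\epsilon_0 > 0$ so small that $\lambda$ is the only zero of $f$ in the closed complex disk $\overline{B_{\mathbb{C}}(\lambda, \epsilon_0)}$. This is possible because real analyticity of $f$ supplies a convergent complex power-series expansion on a neighborhood of $\lambda$, and complex zeros of a non-identically-zero analytic function are isolated. On any circle $\partial B_{\mathbb{C}}(\lambda, \epsilon)$ with $0 < \epsilon \le \epsilon_0$, $|f|$ is bounded below, while the truncations $f_n$ converge to $f$ uniformly on $\overline{B_{\mathbb{C}}(\lambda, \epsilon_0)}$. Hence for every such $\epsilon$ and all sufficiently large $n$, Rouch\'e's theorem gives that $f_n$ has exactly $k$ complex zeros, counted with multiplicity, inside $B_{\mathbb{C}}(\lambda, \epsilon)$.

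Next, since $\lambda$ is a sign change of $f$, its order $k$ is odd. Non-real complex zeros of the real polynomial $f_n$ come in conjugate pairs of equal multiplicity, so their combined multiplicity is even. It follows that the sum of multiplicities of the real zeros of $f_n$ in $(\lambda - \epsilon, \lambda + \epsilon)$ is odd; subtracting the even contribution of the even-multiplicity real zeros shows that an odd, hence positive, number of real zeros of odd multiplicity lies in that interval.

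Failure of property U furnishes sequences $\epsilon_m \downarrow 0$ and $n_m \to \infty$ such that along this subsequence either (a) $f_{n_m}$ has at least two distinct real zeros in $B(\lambda, \epsilon_m)$, or (b) $f_{n_m}$ has a unique real zero $\lambda_{n_m}$ there but also a critical point in that ball distinct from $\lambda_{n_m}$. In case (a), the total real multiplicity is at most $k$ distributed over at least two points, so every real zero has multiplicity at most $k-1$; by the parity observation at least one of them has odd multiplicity, yielding the required $\lambda_{n_m}$ of odd order $k_m < k$. In case (b), parity forces the multiplicity $k_m$ of $\lambda_{n_m}$ to be odd. If we had $k_m = k$, then the Rouch\'e count would force all $k$ complex zeros of $f_{n_m}$ in the ball to coincide at $\lambda_{n_m}$, so $f_{n_m}(x) = (x - \lambda_{n_m})^k h(x)$ with $h$ nowhere zero in the ball; then $f_{n_m}'(x) = (x - \lambda_{n_m})^{k-1}\bigl[k h(x) + (x - \lambda_{n_m}) h'(x)\bigr]$ vanishes near $\lambda_{n_m}$ only at $\lambda_{n_m}$ itself, contradicting the existence of a separate critical point. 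Thus $k_m < k$ in case (b) as well.

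The main technical obstacle is justifying the Rouch\'e step, which requires passing from the real line to a complex neighborhood of $\lambda$ on which both $f$ and its truncations are defined and on which $f_n \to f$ uniformly. This is automatic in the intended application (where $f \in \B$ has radius of convergence at least $1$ and $|\lambda| < 1$), but in the stated generality one needs the real analyticity of $f$ at $\lambda$ to extend to a complex disk around $\lambda$ large enough to contain the zero. Once the Rouch\'e step is in hand, the remainder of the argument is essentially bookkeeping with parities of multiplicities and a local Taylor expansion around a would-be multiple zero.
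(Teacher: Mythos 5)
Your approach is essentially the paper's (extend to a complex disk, apply Rouch\'e/Hurwitz to $f$, split on how property~U fails), and your parity bookkeeping is a nice, more explicit substitute for the paper's terse observation that $f_n$ inherits a sign-changing zero; case~(a) is fine as written. But case~(b) has a real gap. You write that $f_{n_m}'(x) = (x - \lambda_{n_m})^{k-1}\bigl[k h(x) + (x - \lambda_{n_m}) h'(x)\bigr]$ ``vanishes near $\lambda_{n_m}$ only at $\lambda_{n_m}$ itself,'' but the size of that zero-free neighborhood of $\lambda_{n_m}$ depends on $h = h_{n_m}$, and a priori it could shrink to zero as $m \to \infty$. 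The offending critical point lives in a fixed $B(\lambda,\epsilon)$ (or even a shrinking one $B(\lambda,\epsilon_m)$, but with no guaranteed relation to your ``near''), so nothing stops it from sitting in $B(\lambda,\epsilon)$ yet outside your uncontrolled neighborhood. You would need uniform control, e.g.\ showing $h_{n_m} \to \tilde h$ and $h_{n_m}' \to \tilde h'$ locally uniformly via Cauchy estimates, which you don't do.

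The paper closes this by running Rouch\'e/Hurwitz a second time, on the derivative: after shrinking $\epsilon_0$ so that $\lambda$ is the only zero of $f'$ in $\overline{B(\lambda,\epsilon_0)}$, $f'$ has a zero of order exactly $k-1$ at $\lambda$, so for large $n$ the polynomial $f_n'$ has exactly $k-1$ zeros (with multiplicity) in $B(\lambda,\epsilon_0)$. If $\lambda_{n_m}$ has multiplicity $k$ for $f_{n_m}$, it already contributes $k-1$ to that count, so $f_{n_m}'$ has no other zero in $B(\lambda,\epsilon_0)$ --- in particular no separate critical point in the smaller ball --- giving the contradiction. Adding this one application of Rouch\'e to $f'$ would repair your case~(b) and make the argument complete.
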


\begin{lemma}\label{thm:B_compactness}
	In any interval $[a,b]\subset (-1,1)$ where $a<0<b$, any sequence $f_n\in\B$ contains a uniformly convergent subsequence $f_{n_k}\rightrightarrows f\in \B$.
\end{lemma}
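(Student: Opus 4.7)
The plan is to extract a subsequence by a diagonal argument directly on the coefficients, and then show that the resulting coefficient-wise limit converges uniformly on $[a,b]$.

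Write each $f_n(x) = 1 + \sum_{j=1}^\infty a_j^{(n)} x^j$ with $a_j^{(n)} \in \{-1, 0, 1\}$. Since the digit set is finite, a standard diagonal extraction produces a subsequence $f_{n_k}$ along which, for each fixed $j$, the value $a_j^{(n_k)}$ is eventually constant in $k$; call that eventual value $a_j \in \{-1, 0, 1\}$. Define $f(x) = 1 + \sum_{j=1}^\infty a_j x^j$, which belongs to $\B$ by construction.

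To verify uniform convergence $f_{n_k} \rightrightarrows f$ on $[a,b]$, set $r = \max(|a|, |b|) < 1$. Given $\epsilon > 0$, pick $M$ large enough that $\frac{2 r^{M+1}}{1-r} < \epsilon$. By the diagonal construction, there exists $K$ such that $a_j^{(n_k)} = a_j$ for every $j \leq M$ and every $k \geq K$. For such $k$ and any $x \in [a,b]$, the first $M$ terms of $f_{n_k} - f$ vanish, so
$$
|f_{n_k}(x) - f(x)| \leq \sum_{j=M+1}^\infty |a_j^{(n_k)} - a_j|\,|x|^j \leq \sum_{j=M+1}^\infty 2 r^j < \epsilon.
$$

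There is no substantive obstacle here: the two ingredients are the finiteness of the coefficient alphabet (which enables diagonal extraction to make the coefficients literally agree eventually, not merely approximate) and the uniform geometric tail bound, which only requires $r < 1$. One could alternatively combine Arzelà–Ascoli (the uniform bound $(1-r)^{-1}$ on $|f_n|$ and the derivative bound $(1-r)^{-2}$ on $[-r,r]$ give equicontinuity) with Lemma \ref{thm:B_completeness} to conclude, but the diagonal argument is more direct since it produces the limit in $\B$ automatically.
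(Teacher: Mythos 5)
Your proof is correct and is essentially the standard, self-contained argument: diagonal extraction on the finite coefficient alphabet $\{-1,0,1\}$ to get a subsequence whose coefficients stabilize, together with a uniform geometric tail bound on $[a,b]$. The paper's own proof is much terser and takes a different route: it bounds $\norm{f_n}_{L^1[a,b]}$ by $\frac{1}{1-\max\{\abs{a},\abs{b}\}}$, invokes ``Bolzano--Weierstrass'' to extract a convergent subsequence, and then cites Lemma \ref{thm:B_completeness} to identify the limit as a member of $\B$. As written that argument is shaky --- Bolzano--Weierstrass applies in $\R^n$, and a norm-bounded sequence in $L^1[a,b]$ need not have an $L^1$-convergent (let alone uniformly convergent) subsequence --- so the paper is presumably compressing a compactness argument equivalent in spirit to yours (the coefficient space $\{-1,0,1\}^{\NN}$ is compact and evaluation on $[a,b]$ is continuous). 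Your version makes that compactness explicit, yields uniform (rather than $L^1$) convergence directly, and produces the limit in $\B$ without needing Lemma \ref{thm:B_completeness} at all; it is both more elementary and more rigorous than what the paper records. The one small point worth flagging: in the final estimate you implicitly use $\sum_{j\geq M+1} 2r^j = \frac{2r^{M+1}}{1-r}$, which is exactly what $M$ was chosen to control, so everything closes up correctly.
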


\subsubsection{Proof for subset of $\N$}

\begin{proof}[Proof of Lemma \ref{thm:surjective_perturbation_N}]
	$f_n$ is the $n$-th truncation of the power series corresponding to the representations of $u,v$. In particular, $f_n$ uniformly converges to $f$, which is not identically zero as $u\neq v$.
	Without loss of generality, assume that $\gamma_0<\lambda_0$ and that $f$ changes sign from $+$ to $-$ at $\gamma_0$.
	
	Due to property U, there exists a neighborhood of $\gamma_0$ for which $f_n$ is monotonically decreasing for all large enough $n$, denote it by $[\alpha,\beta]$. Clearly, $f$ is monotonically decreasing in this neighborhood as well.
	
	Let $\epsilon>0$ be such that $\alpha<\gamma_0-\epsilon$. Then take $\delta_1=f(\gamma_0-\epsilon)$. Let $0<w_1<\delta_1$ be arbitrary. As $f_n(\gamma_0-\epsilon)\to \delta_1$, for sufficiently large $n$ we must have $f_n(\gamma_0-\epsilon)>w_1$. Similarly, $f_n(\gamma_0)\to f(\gamma_0)=0$, therefore for sufficiently large $n$ we also have $f_n(\gamma_0)<w_1$. Hence, by the Intermediate Value Theorem (and monotonicity of $f_n$), there is a point $\gamma\in(\gamma_0-\epsilon,\gamma_0)$ with $f_n(\gamma)=w_1$.
	
	Therefore every $0<w_1<\delta_1$ is achieved for every sufficiently large $n$ in $(\gamma_0-\epsilon,\gamma_0)$.
	
	An analogous argument to the right of $\gamma_0$ yields that every $-\delta_2<w_1<0$ is also achieved in $(\gamma_0,\gamma_0+\epsilon)$ where we require $\gamma_0+\epsilon<\beta$ and set $\delta_2=-f(\gamma_0+\epsilon)$.
	
	Regarding $w_1=0$, it is achieved due to Lemma \ref{thm:case_12_tail} part 1, if we choose $n$ sufficiently large such that $\gamma_n$ falls in $(\gamma_0-\epsilon,\gamma_0+\epsilon)$.
	
	Repeating this argument for $\lambda_0$ yields an analogous $\delta_3$ and $\delta_4$. Therefore by taking a sufficiently small $\delta$ would satisfy all six cases. Hence, every vector $w\in\R^2$ with $\norm{w}_\infty <\delta$ is achieved around $(\gamma_0,\lambda_0)$ for all sufficiently large $n$ as required.
\end{proof}

\subsection{Regular Closedness of a subset}
The final ingredient for proving regular closedness is an analog to \cite[Corollary~7.2.6]{calegari}. The case of $\N$ is more complicated, and requires additional assumptions:

\begin{lemma}[The existence of normalized translation vectors] \label{thm:surj_for_trap_like}
	Let ${(\gamma_0,\lambda_0)\in \N'_U}$. Then for every vector $0\neq w\in\R^2$ and every sufficiently small $\epsilon>0$ there exists a parameter $(\gamma_1,\lambda_1)\in B((\gamma_0,\lambda_0),\epsilon)$ such that $w$ is a normalized translation vector for $(\gamma_1,\lambda_1)$.
\end{lemma}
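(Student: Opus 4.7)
The plan is to follow the strategy of Lemma \ref{thm:surjective_perturbation_N}, with the added twist that the target value of the perturbation now depends on the unknown parameter. Let $f\in\B$ be the power series from Definition \ref{def:N_u}, vanishing at both $\gamma_0$ and $\lambda_0$ as sign changes and satisfying property U at each. Let $u,v\in\Sigma^\infty_2$ be any two addresses with coinciding $(\gamma_0,\lambda_0)$-images that induce $f$. Because $f$ has constant term $1$, we may choose $u,v$ so that their first letters differ, whence no prefixes $u^m,v^m$ ever share a common prefix. By the identification recalled in Lemma \ref{thm:surjective_perturbation_N}, the translation vector between the cylinder sets $u^m\A$ and $v^m\A$ at a parameter $(\gamma,\lambda)$ equals $\pm(f_m(\gamma),f_m(\lambda))$, where $f_m$ denotes the degree-$m$ truncation of $f$. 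Since $T$ is diagonal in the $\N$ setup, the requirement that $w=(w_1,w_2)$ be a normalized translation vector at a perturbed parameter $(\gamma_1,\lambda_1)$ via $u^m,v^m$ becomes the decoupled system
$$
f_m(\gamma_1)=\gamma_1^m w_1, \qquad f_m(\lambda_1)=\lambda_1^m w_2
$$
(any overall sign being absorbed into an interchange of $u$ and $v$).

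I would then solve each scalar equation separately, mirroring the IVT argument from the proof of Lemma \ref{thm:surjective_perturbation_N}. Shrink $\epsilon>0$ so that $[\gamma_0-\epsilon,\gamma_0+\epsilon]\subset(-1,1)$ and property U yields, for every sufficiently large $m$, a strictly monotone $f_m$ on this interval with the same sign change as $f$ at its endpoints (and analogously at $\lambda_0$). Set $\phi^{(1)}_m(x):=f_m(x)-x^m w_1$. Because $|\gamma_0\pm\epsilon|<1$, the perturbation satisfies $\sup_{[\gamma_0-\epsilon,\gamma_0+\epsilon]}|x^m w_1|\to 0$ as $m\to\infty$, while simultaneously $f_m\to f$ uniformly. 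Choosing $m$ large enough that both this supremum and the sup-norm of $f_m-f$ on the interval fall below $\tfrac12\min\{|f(\gamma_0-\epsilon)|,|f(\gamma_0+\epsilon)|\}$ forces $\phi^{(1)}_m$ to inherit the sign change of $f$ at $\gamma_0\pm\epsilon$, so the Intermediate Value Theorem furnishes a root $\gamma_1\in(\gamma_0-\epsilon,\gamma_0+\epsilon)$. The same argument applied to $\phi^{(2)}_m(y):=f_m(y)-y^m w_2$ produces $\lambda_1\in(\lambda_0-\epsilon,\lambda_0+\epsilon)$.

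The resulting $(\gamma_1,\lambda_1)\in B((\gamma_0,\lambda_0),\epsilon)$ satisfies $(f_m(\gamma_1),f_m(\lambda_1))=T_{(\gamma_1,\lambda_1)}^m w$, and therefore $w=T_{(\gamma_1,\lambda_1)}^{-m}(s_{u^m}-s_{v^m})$ is a normalized translation vector for $(\gamma_1,\lambda_1)$, with the words $u^m,v^m$ as witnesses. The principal obstacle compared with the original Surjective Perturbation Lemma is the self-referential nature of the equation $f_m(x)=x^m w_1$: the target on the right depends on the unknown on the left. Passing to the truncation $f_m$ and letting $m$ grow is precisely what tames this coupling, since the factor $x^m$ decays uniformly on the chosen interval. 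Once the perturbation is dominated by the monotone variation of $f_m$, the rest is a direct IVT argument with no new analytic input required beyond what already powers Lemma \ref{thm:surjective_perturbation_N}.
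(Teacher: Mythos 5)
Your proof is correct, and it takes a genuinely different route from the paper's. The paper reduces the self-referential equation $T^m_{(\gamma_1,\lambda_1)}w = f_m(\gamma_1,\lambda_1)$ to a fixed-point problem: it invokes the Surjective Perturbation Lemma (\ref{thm:surjective_perturbation_N}) to get a well-defined continuous map $g:V\to V$ (here property U is used crucially, both for the uniqueness needed in Lemma \ref{thm:unqiue_implicit_is_continuous} and indirectly inside Lemma \ref{thm:surjective_perturbation_N}), and then applies Brouwer's fixed point theorem on a closed ball. You instead observe that the two scalar equations decouple because $T$ is diagonal, and that the self-referential right-hand side $x^m w_i$ tends to $0$ uniformly on a compact subinterval of $(-1,1)$, so for large $m$ the perturbed function $\phi^{(i)}_m = f_m - x^m w_i$ inherits the sign change of $f$ at $\gamma_0\pm\epsilon$ (respectively $\lambda_0\pm\epsilon$) and the IVT produces the root directly. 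This is cleaner and dispenses with both Brouwer and Lemma \ref{thm:unqiue_implicit_is_continuous}. Two small remarks: (i) the strict monotonicity of $f_m$ you cite from property U is not actually needed anywhere in your argument — the IVT conclusion already follows from the endpoint sign change together with uniform convergence, so you are in fact only using the hypothesis that $\gamma_0,\lambda_0$ are sign-change zeros, i.e.\ $(\gamma_0,\lambda_0)\in\N'$ rather than $\N'_U$; this is strictly stronger than what the lemma asks for (and would short-circuit the induction in Theorem \ref{thm:N_prime_result}); (ii) you should be slightly careful asserting property U implies strict monotonicity of $f_m$ — Definition \ref{def:property_U} only excludes critical points away from the unique zero $\lambda_n$, which does not by itself forbid $f_m'$ from changing sign at $\lambda_n$ — but since you never actually use monotonicity this is harmless.
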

\begin{proof}
	Let $\epsilon>0$ be sufficiently small, such that $\gamma,\lambda$ are the only zeros of $f$ in $B((\gamma,\lambda),\epsilon)$.
	Suppose $\delta, M_0$ are given by Lemma \ref{thm:surjective_perturbation_N} for $\frac{\epsilon}{2}$.
	Let $V=\overline{B((\gamma_0,\lambda_0),\frac{\epsilon}{2})}$. 
	There is an $M_1$ such that $\norm{T_{(\gamma,\lambda)}^{m}w}<\delta$ for all $m>M_1$ and all $(\gamma,\lambda)\in V$ (this is true as $\norm{T_{(\gamma_0,\lambda_0)}}<1$ and $\epsilon$ is sufficiently small).
	
	Due to property U, there exists an $M_2$ such that for all $m>M_2$, there are unique zeros $\gamma_m$ and $\lambda_m$ of $f_m$ inside $V$. In particular, $f_m$ is injective on $B(\gamma_m,\frac{\epsilon}{2})$ and on $B(\lambda_m,\frac{\epsilon}{2})$.
	
	Let $(\gamma,\lambda)\in V$ and let $M=\max\{M_0,M_1, M_2\}$. Note that $T_{(\gamma,\lambda)}^Mw\in B(0,\delta)$, therefore by Lemma \ref{thm:surjective_perturbation_N} we can choose $(\gamma',\lambda')\in V$ such that $${T_{(\gamma,\lambda)}^Mw=f_M((\gamma',\lambda'))}$$
	We construct a continuous function $g:V\to V$ that maps $(\gamma,\lambda)$ to $(\gamma',\lambda')$.
	We claim $(\gamma',\lambda')$ is unique in $V$. Indeed, $f_M$ is injective on the projections of $V$ to neighborhoods of $\gamma_0$ and $\lambda_0$. Therefore if we define $F(x,y)=f_M(y)-x^Mw$, this function satisfies Lemma \ref{thm:unqiue_implicit_is_continuous} for $K=M=V$ and $n=2$. Therefore, we define a continuous $g:V\to V$ that satisfies 
	$$
	T^M_{(\gamma,\lambda)}w=f_M(g(\gamma,\lambda)).
	$$
	Since $V$ is compact and convex, we can apply Brouwer fixed point theorem and find a parameter $(\gamma_1,\lambda_1)\in V$ such that $g(\gamma_1,\lambda_1)=(\gamma_1,\lambda_1)$. This implies that
	$$
	T^M_{(\gamma_1,\lambda_1)}w=f_M(g(\gamma_1,\lambda_1))=f_M(\gamma_1,\lambda_1).
	$$ 
	Equivalently, $w$ is a normalized translation vector for $(\gamma_1,\lambda_1)$.
\end{proof}

In conclusion, we obtain a partial result concerning regular closedness that is analogous to \cite[Theorem~7.2.7]{calegari}:

\densetraps

\begin{proof}
	The proof is very similar to \cite{calegari} Theorem 7.2.7, where we replaced the three ingredients described in the start of the section by analogs in $\N$.
	
	Suppose $X_{(\gamma_0,\lambda_0)}$ is the region bounded by $\A_{(\gamma_0,\lambda_0)}$, that is, the complement of the unbounded component of the complement of $\A$, in particular it is cell-like. It is also non-convex by Corollary \ref{thm:non_convexity_of_attractor}. Then, by Lemma \ref{thm:trap_like_non_convex} there exists a trap-like $w$ and four alternating points $p_1,p_2\in X_{(\gamma_0,\lambda_0)}$ and $q_1,q_2\in X_{(\gamma_0,\lambda_0)}+w$ on the boundary of $X_{(\gamma_0,\lambda_0)}\cup(X_{(\gamma_0,\lambda_0)}+w)$. Since $\partial X_{(\gamma_0,\lambda_0)}\subseteq\A_{(\gamma_0,\lambda_0)}$, we find that the points are in the attractor, i.e., $p_i\in \A_{(\gamma_0,\lambda_0)}$ and $q_i\in \A_{(\gamma_0,\lambda_0)}+w$.

	We'll denote by $p_i(\gamma,\lambda),q_i(\gamma,\lambda)$ the points with the same addresses as $p_i,q_i$, only for the parameter $(\gamma,\lambda)$.
	For every sufficiently small $\epsilon>0$ we know $N_\epsilon(X_{(\gamma_0,\lambda_0)})$ is connected, $p_1,p_2\in \A_{(\gamma_0,\lambda_0)} \setminus N_\epsilon(X_{(\gamma_0,\lambda_0)}+w)$, and 
	$q_1,q_2\in (\A_{(\gamma_0,\lambda_0)}+w)\setminus N_\epsilon(X_{(\gamma_0,\lambda_0)})$.

	This is because all these conditions are equivalent to inequalities on distances between points or cylinder sets, which are satisfied for $\epsilon=0$ (and there are only a finite number of conditions).
	Fix $\epsilon$. For every sufficiently close parameter $(\gamma,\lambda)$ to $(\gamma_0,\lambda_0)$ the set ${N_\epsilon(X_{(\gamma,\lambda)})}$ is connected, $p_1(\gamma,\lambda),p_2(\gamma,\lambda)\in {\A_{(\gamma,\lambda)}\setminus N_\epsilon(X_{(\gamma,\lambda)}+w)}$ and $q_1(\gamma,\lambda),q_2(\gamma,\lambda)\in {(\A_{(\gamma,\lambda)}+w)\setminus N_\epsilon(X_{(\gamma,\lambda)})}$. Furthermore, we demand that the relative position of the points remains unchanged.
	All these claims can be expressed through (non-strict) inequalities concerning the distances of points or cylinder sets, which all hold for the parameter $(\gamma_0,\lambda_0)$. Due to continuity regarding the parameter of all the maps involved, the claims would be satisfied for any sufficiently close parameter as well.
	
	Since $(\gamma_0,\lambda_0)$ is in $\N'_U$, we can apply Lemma \ref{thm:surj_for_trap_like} and obtain an $m$ and a parameter ${(\gamma_1,\lambda_1)\in B((\gamma_0,\lambda_0),\delta)}$, where $w$ is a normalized translation vector for $u^m,v^m$, i.e.\ 
	$$
	T^{-m}(s_{u^m}-s_{v^m})((\gamma_1,\lambda_1))=w.
	$$
	All the conditions to Lemma \ref{thm:trap_in_affine_case} are satisfied for $(\gamma_1,\lambda_1)$ and this implies a trap. 
	Consequently, we find interior points of $\N$ that are within a distance $\delta$ of $(\gamma_0,\lambda_0)$. Since $(\gamma_0,\lambda_0)$ was arbitrary and $\delta$ only needed to be sufficiently small, this completes the proof.

\end{proof}
\section{Extending Regular Closedness using Perturbations}\label{sec:analytic}
\subsection{Sign-Change Zeros}
We can eliminate the need for property U:
\begin{theorem}\label{thm:N_prime_result}
	$\mdiags{\N'}\subseteq \overline{\interior{\N}}$
\end{theorem}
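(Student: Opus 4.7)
The strategy is to reduce the statement to Theorem \ref{thm:calegari_analog} by showing that every point of $\mdiags{\N'}$ is a limit of points in $\mdiags{\N'_U}$; since $\overline{\interior{\N}}$ is closed and contains the latter set, this will give the desired inclusion.

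Given $(\gamma_0,\lambda_0)\in\mdiags{\N'}$ with witness $f\in\B$, the plan is to work with the truncations $f_n$. Each $f_n$ is a polynomial with coefficients in $\{-1,0,1\}$, hence (padded by zero coefficients from index $n+1$ onwards) an element of $\B$, and the tail bound $|f(x)-f_n(x)|\leq|x|^{n+1}/(1-|x|)$ shows $f_n\to f$ uniformly on any compact subinterval of $(-1,1)$. Since both $\gamma_0$ and $\lambda_0$ are isolated sign-change zeros of the real-analytic $f$, the values $f(\gamma_0\pm\epsilon)$ have opposite signs for every sufficiently small $\epsilon>0$, and uniform convergence transfers this inequality to $f_n$ once $n$ is large. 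The Intermediate Value Theorem then provides sign-change zeros $\gamma_n\in(\gamma_0-\epsilon,\gamma_0+\epsilon)$ and $\lambda_n\in(\lambda_0-\epsilon,\lambda_0+\epsilon)$ of $f_n$. A diagonal argument in $\epsilon$ yields a sequence $(\gamma_n,\lambda_n)\to(\gamma_0,\lambda_0)$.

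The key step is to verify that $(\gamma_n,\lambda_n)\in\mdiags{\N'_U}$ for all large $n$, with $g_n:=f_n$ (viewed as an element of $\B$) as witness. By construction, $g_n$ vanishes at both $\gamma_n$ and $\lambda_n$ with a sign change, and $\gamma_n\neq\lambda_n$ for large $n$ since $\gamma_0\neq\lambda_0$. Property U of $g_n$ around $\gamma_n$ is essentially automatic because the truncations $(g_n)_m$ equal the single polynomial $f_n$ for every $m\geq n$: picking $\epsilon_0>0$ smaller than the distance from $\gamma_n$ to any other zero or critical point of $f_n$, we obtain that for every $0<\epsilon<\epsilon_0$ and every $m\geq n$, $(g_n)_m$ has $\gamma_n$ as its unique zero in $B(\gamma_n,\epsilon)$ and no critical points there except possibly $\gamma_n$ itself. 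The same reasoning applies around $\lambda_n$. Note that this dispenses with any need to control the order of the zero $\gamma_n$, which is why the reduction bypasses the difficulties addressed by Lemma \ref{thm:not_U_lower_order}.

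Applying Theorem \ref{thm:calegari_analog} to each $(\gamma_n,\lambda_n)$ places the sequence inside $\overline{\interior{\N}}$, and closedness of this set gives $(\gamma_0,\lambda_0)\in\overline{\interior{\N}}$. The only minor subtlety to watch for is that the isolated sign-change property of $f$ at $\gamma_0,\lambda_0$ really passes to the truncations; this is immediate from the explicit tail estimate combined with real analyticity, so no step poses a serious obstacle.
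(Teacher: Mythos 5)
Your proof is correct, and it takes a genuinely different and arguably cleaner route than the paper's. The paper argues by induction on the (odd) order $2k+1$ of the non-simple sign-change zero: the base case $k=0$ uses Lemma \ref{thm:simple_is_U} to get property U, and the inductive step invokes Lemma \ref{thm:not_U_lower_order} to extract truncations with a strictly lower-order sign-change zero near $\lambda$ (and Hurwitz near the simple zero $\gamma$), then applies the induction hypothesis. Your argument sidesteps all of this with the single observation that a polynomial witness in $\B$ satisfies property U trivially, because its truncations stabilize; this lets you reduce directly to $\mdiags{\N'_U}$ via the truncations $f_n$ and invoke Theorem \ref{thm:calegari_analog} without any induction, and without Lemmas \ref{thm:order_of_zeros}, \ref{thm:simple_is_U}, or \ref{thm:not_U_lower_order}. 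What each buys: the paper's induction carries along order information about the zeros, which it also reuses elsewhere (e.g.\ in Section 5 on outlier points), whereas your reduction is shorter and exposes more plainly why the polynomial case is the essential one --- indeed, after one step the paper's induction also lands on polynomial witnesses, so your version is in effect the flattened form of theirs. One small thing worth spelling out (you gesture at it) is that the IVT only gives \emph{a} zero of $f_n$ in $(\gamma_0-\epsilon,\gamma_0+\epsilon)$; since $f_n$ has finitely many zeros there and takes opposite signs at the endpoints, at least one of them must be a sign change, and that is the one you should name $\gamma_n$.
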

\begin{proof}
	Let $(\gamma,\lambda)\in \N'$, and let $f\in\B$ with $f(\gamma)=f(\lambda)=0$, where both $\gamma$ and $\lambda$ are sign changes for $f$.
	Since $f$ is a uniformly converging power series, its zeros are of finite order, and because both are sign changes their orders must be odd.
	Without loss of generality, suppose $1,2k+1$ are the orders of $\gamma,\lambda$ respectively.
	We will prove that any zeros of any $f\in\B$ of orders $1,2k+1$ are in $\overline{\interior{\N}}$ by induction on $k$.
	
	For $k=0$, both zeros are simple. Therefore property U is satisfied due to Lemma \ref{thm:simple_is_U}, and we can apply Theorem \ref{thm:calegari_analog} to find arbitrarily close interior points.

	For $k>0$, if $f$ satisfies property U around $\lambda$, the result again follows from Theorem \ref{thm:calegari_analog}. Otherwise, we can apply Lemma \ref{thm:not_U_lower_order} and find a subsequence $m_n$ for which there exists zeros $\lambda_{m_n}\to\lambda$ of odd orders $2k_{m_n}+1<2k+1$. At least one of these orders is achieved infinitely often, say at $f_{m_{n_j}}$, with order $2k'+1<2k+1$. As $\gamma$ is simple, due to Hurwitz theorem, for all sufficiently large $N$ there exists a simple zero $\gamma_N$ of $f_N$ with $\gamma_N\to\gamma$. Therefore we find pairs of zeros $(\gamma_{m_{n_j}},\lambda_{m_{n_j}})$ of $f_{m_{n_j}}$ for all sufficiently large $j$, all of them of orders $1,2k'+1$. Therefore by the induction hypothesis, $(\gamma_{m_{n_j}},\lambda_{m_{n_j}})\in \overline{\interior{\N}}$ for sufficiently large $j$. As $(\gamma_{m_{n_j}},\lambda_{m_{n_j}})\to(\gamma,\lambda)$, this completes the proof.
\end{proof}

\subsection{Outlier Points}

We now use the power series representation of $\N$ to gain more partial results.
\begin{definition}\label{def:outlier_points}
	A parameter $(\gamma,\lambda)\in \mdiags{\N}$ is called an {\it outlier} if $(\gamma,\lambda)\notin \overline{\interior{\N}}$.
	Due to Theorem \ref{thm:N_prime_result}, there are no outlier points in $\N'$.
	We denote the set of outlier points by $\N''$.
\end{definition}

Throughout this subsection, let $(\gamma,\lambda)\in\N''$  and let $f\in\B$ be a power series representation with $f(\gamma)=f(\lambda)=0$. We will incrementally prove restrictions on $f$ and $(\gamma,\lambda)$.

The following lemma would be useful:

\begin{lemma}\label{thm:case_12_tail}
	Let $f$ be a non-constant real analytic function and $\lambda$ an isolated zero of $f$. Let $f_n\rightrightarrows f$ all non-constant real analytic. Then:
\begin{enumerate}
	\item\label{thm:case12_tail_a} If $\lambda$ is a zero which is a sign change for $f$, then for a sufficiently large $n$ there exists $\lambda_n$, a zero which is a sign change for $f_n$, and these satisfy $\lambda_n\to\lambda$.
	\item\label{thm:case12_tail_b} Assume that $\lambda$ is a local maximum of $f$, and there exists a series $\alpha_n$ that converges to $\lambda$, such that $f_n(\alpha_n)>0$ for all sufficiently large $n$. Then for all sufficiently large $n$, there exists $\lambda_n$, a zero of which is a sign change for $f_n$, and also $\lambda_n\to\lambda$.
	\item\label{thm:case12_tail_c}  Assume that  $\lambda$ is a local minimum of $f$, and there exists a series $\alpha_n$ that converges to $\lambda$, such that $f_n(\alpha_n)<0$ for all sufficiently large $n$. Then for all sufficiently large $n$, there exists $\lambda_n$, a zero which is a sign for $f_n$, and also $\lambda_n\to\lambda$.
\end{enumerate}
\end{lemma}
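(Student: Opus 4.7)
The plan is to treat all three parts by reducing each to an elementary Intermediate Value Theorem argument applied to $f_n$, using uniform convergence to transport two-point sign data from $f$ to $f_n$. The key observation is that a continuous function that takes values of opposite signs at the endpoints of an interval must possess at least one zero in the interior at which the sign actually flips, and any such sign-flip is by definition a sign-change zero. So in each case the whole task is to produce, for every sufficiently small neighborhood of $\lambda$, two points inside that neighborhood at which $f_n$ eventually takes values of opposite signs.

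For part \ref{thm:case12_tail_a}, I will use that $\lambda$ is isolated and a sign change for $f$: for each sufficiently small $\epsilon>0$ I can pick $a\in(\lambda-\epsilon,\lambda)$ and $b\in(\lambda,\lambda+\epsilon)$ with $f(a)$ and $f(b)$ of opposite nonzero signs. Uniform convergence $f_n\rightrightarrows f$ forces $f_n(a)$ and $f_n(b)$ to inherit these opposite signs for all sufficiently large $n$, and then the IVT produces a sign-change zero $\lambda_n\in(a,b)\subset(\lambda-\epsilon,\lambda+\epsilon)$. Running this with $\epsilon\to 0$ and choosing one such $\lambda_n$ per $n$ (large enough) gives $\lambda_n\to\lambda$.

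For part \ref{thm:case12_tail_b}, since $\lambda$ is an isolated zero of $f$ and a local maximum, for every sufficiently small $\epsilon>0$ I can pick $a\in(\lambda-\epsilon,\lambda)$ and $b\in(\lambda,\lambda+\epsilon)$ with $f(a)<0$ and $f(b)<0$. Uniform convergence gives $f_n(a)<0$ and $f_n(b)<0$ for all large $n$, and the hypothesis $\alpha_n\to\lambda$, $f_n(\alpha_n)>0$ ensures that eventually $\alpha_n\in(a,b)$ with $f_n(\alpha_n)>0$. The IVT applied to $(a,\alpha_n)$ then produces a sign-change zero $\lambda_n$ of $f_n$ in $(\lambda-\epsilon,\lambda+\epsilon)$, and again shrinking $\epsilon$ yields $\lambda_n\to\lambda$. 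Part \ref{thm:case12_tail_c} is identical after reversing signs, i.e., choosing $a,b$ where $f>0$ and exploiting $f_n(\alpha_n)<0$.

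There is no genuine obstacle; the only thing to be careful about is the distinction between an arbitrary zero and a sign-change zero, and that is precisely what the IVT with opposite-sign endpoints delivers. The role of the auxiliary sequence $\alpha_n$ in parts \ref{thm:case12_tail_b} and \ref{thm:case12_tail_c} is to manufacture, for each $n$, an interior point of the right sign so that an endpoint-interior-endpoint application of IVT still works even though $f$ itself never changes sign at $\lambda$.
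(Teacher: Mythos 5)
Your proof is correct, and for part~\ref{thm:case12_tail_a} it takes a genuinely different (and arguably cleaner) route than the paper. The paper extends $f$ to a complex neighborhood, invokes Hurwitz's theorem to obtain $2k+1$ zeros of $f_n$ near $\lambda$, and then argues by parity of the real zeros that one of them is a sign change; your version instead locks down $f_n(a)<0<f_n(b)$ (or vice versa) at two fixed real points straddling $\lambda$ via uniform convergence and concludes by an IVT argument. Both are valid. Your approach avoids complex-analytic machinery entirely and has the advantage of treating all three parts uniformly: in each case you manufacture two points near $\lambda$ at which $f_n$ eventually takes opposite signs, with the auxiliary sequence $\alpha_n$ supplying the ``interior'' point of the right sign in parts~\ref{thm:case12_tail_b} and~\ref{thm:case12_tail_c}. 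For parts~\ref{thm:case12_tail_b}--\ref{thm:case12_tail_c} your argument is essentially the same as the paper's, which also pins down a negative value of $f_n$ to the left of $\lambda$ and uses $f_n(\alpha_n)>0$ together with IVT. One small remark worth making explicit, which both you and the paper leave implicit: the IVT on its own only produces a zero, not a sign-change zero. The step that upgrades this is that $f_n$ is non-constant real analytic, hence has only finitely many zeros in a compact interval, and with opposite signs at the endpoints at least one of those finitely many zeros must be of odd order, i.e.\ a sign change. Stating this in one sentence would make the argument fully airtight.
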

\begin{proof}
\begin{enumerate}
\item 
$\lambda$ is a sign change, hence it is of odd order for $f$, say $2k+1$. Let $\epsilon>0$ be sufficiently small such that $f$ contains no other zeros in $[\lambda-\epsilon,\lambda+\epsilon]$ and that $f$ is analytic in the complex $\epsilon$-neighborhood of $\lambda$. By Hurwitz theorem, for sufficiently large $n$, $f_n$ has $2k+1$ zeros in this neighborhood. Since $f$ has real coefficients, at least one of these zeros is real, denoted $\lambda_n$. Without loss of generality, assume that $\lambda_n$ converges, then by uniform convergence, the limit is a zero of $f$ in $[\lambda-\epsilon,\lambda+\epsilon]$. But the only real zero of $f$ in this interval is $\lambda$ by the choice of $\epsilon$.

\item 
Since $\lambda$ is a local maximum of $f$, there exists some $\delta<\lambda$ such that $f(\delta)<0$ and $f$ is increasing on $[\delta,\lambda]$. Due to uniform convergence, for all sufficiently large $n$, we find that $f_n(\delta)<0$.
We also know that $\delta<\alpha_n$ for sufficiently large $n$. Since $f_n(\alpha_n)>0$, there exists a zero with a sign change $\lambda_n$ of $f_n$ in the interval $[\delta,\alpha_n]$. Without loss of generality, assume that $\lambda_n$ converges, say to $\eta$. Clearly $\eta\in[\delta,\lambda]$ and due to uniform convergence, $f(\eta)=0$. 

In the interval $[\delta,\lambda]$, the only zero of $f$ is $\lambda$ (due to the choice of $\delta$). Therefore $\eta=\lambda$. Consequently, $\lambda_n$ satisfies the conditions of the lemma.

\item Follows from the previous one for $-f$ and $-f_n$
\end{enumerate}
\end{proof}

\begin{corollary}\label{thm:case1_2}
	Let $f,f_n\in\B$ and $f_n\rightrightarrows f$. Assume $\gamma$ and $\lambda$ are zeros of $f$ and $\gamma\neq\lambda$. Assume one of $\gamma$ or $\lambda$ satisfies the conditions of part \ref{thm:case12_tail_a} in Lemma \ref{thm:case_12_tail} and the other satisfies the conditions of either part \ref{thm:case12_tail_b} or part \ref{thm:case12_tail_c} of the same lemma. Then $(\gamma,\lambda)\in\overline{\N'}$. In particular $(\gamma,\lambda)\in\overline{\interior{\N}}$
\end{corollary}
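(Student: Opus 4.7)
The plan is essentially to apply Lemma \ref{thm:case_12_tail} twice, once to each of $\gamma$ and $\lambda$, with the \emph{same} approximating sequence $f_n$, and then invoke Theorem \ref{thm:N_prime_result}. Without loss of generality, I assume $\gamma$ is the zero satisfying the hypothesis of part \ref{thm:case12_tail_a} (so $\gamma$ is itself a sign change of $f$) and $\lambda$ is the zero satisfying the hypothesis of part \ref{thm:case12_tail_b} or part \ref{thm:case12_tail_c} (so $\lambda$ is a one-sided local extremum of $f$ accompanied by a probing sequence $\alpha_n$ with the required sign).

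Applying part \ref{thm:case12_tail_a} of Lemma \ref{thm:case_12_tail} to $\gamma$, there is an index $N_1$ such that for every $n\ge N_1$ the function $f_n$ has a sign-change zero $\gamma_n$, and $\gamma_n\to\gamma$. Applying the appropriate one of parts \ref{thm:case12_tail_b} and \ref{thm:case12_tail_c} to $\lambda$, there is an index $N_2$ such that for every $n\ge N_2$ the same $f_n$ has a sign-change zero $\lambda_n$, and $\lambda_n\to\lambda$. For $n\ge\max(N_1,N_2)$ the single power series $f_n\in\B$ simultaneously witnesses that both $\gamma_n$ and $\lambda_n$ are sign-change zeros, which is precisely the condition in Definition \ref{def:N_prime}; hence $(\gamma_n,\lambda_n)\in\N'$.

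Finally, because $\gamma\ne\lambda$ and $(\gamma_n,\lambda_n)\to(\gamma,\lambda)$, for all sufficiently large $n$ one has $\gamma_n\ne\lambda_n$ (and the pair lies in $(-1,1)^2$), so $(\gamma_n,\lambda_n)\in\mdiags{\N'}$. Passing to the limit gives $(\gamma,\lambda)\in\overline{\N'}$, and in fact $(\gamma,\lambda)\in\overline{\mdiags{\N'}}\subseteq\overline{\interior{\N}}$ by Theorem \ref{thm:N_prime_result}.

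I do not foresee a real obstacle: Lemma \ref{thm:case_12_tail} carries the entire analytic burden, and the only thing to watch is that the two applications of the lemma produce sign-change zeros of the \emph{same} $f_n$, so that $f_n$ can serve as a common witness for membership in $\N'$; this is automatic since both applications use the same approximating sequence. The condition $\gamma\ne\lambda$ in the hypotheses is exactly what is needed to guarantee the approximants avoid the diagonal for large $n$ and therefore lie in the set to which Theorem \ref{thm:N_prime_result} applies.
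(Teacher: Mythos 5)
Your proof is correct and is essentially the same approach as the paper's (which is stated only in one sentence); you have simply spelled out the details — that both applications of Lemma \ref{thm:case_12_tail} to the same approximating sequence $f_n\in\B$ yield sign-change zeros $\gamma_n,\lambda_n$ of a common $f_n$, hence $(\gamma_n,\lambda_n)\in\N'$, and that $\gamma\neq\lambda$ forces the approximants off the diagonal so Theorem \ref{thm:N_prime_result} applies in the limit.
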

\begin{proof}
	Lemma \ref{thm:case_12_tail} finds the zeros $(\gamma_n,\lambda_n)$, and as we also assume $f_n\in\B$, this implies they are in $\N'$.
\end{proof}

First, we restrict the order of zeros of $f$:

\begin{lemma}\label{thm:eliminate_case}
	\begin{enumerate}
		\item One of the zeros is of order 1 and the other of order $2k$ for some $k\in\NN$.
		\item If $0<\gamma<\lambda$ then  $\gamma$ is of order 1 and $\lambda$ is of order $2k$.
	\end{enumerate}
\end{lemma}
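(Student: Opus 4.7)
The plan is to combine the structural bound on multiplicities from Lemma \ref{thm:order_of_zeros} with the fact that an outlier cannot lie in $\N'$ (by Theorem \ref{thm:N_prime_result}). The hypothesis that $(\gamma,\lambda)$ is an outlier gives $(\gamma,\lambda) \notin \mdiags{\N'}$, and since $(\gamma,\lambda) \in \mdiags{\N}$, this forces $(\gamma,\lambda) \notin \N'$. Unpacking Definition \ref{def:N_prime}, this means that for our fixed $f \in \B$ with $f(\gamma) = f(\lambda) = 0$, at least one of $\gamma, \lambda$ fails to be a sign change for $f$.

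For part 1, I would first invoke Lemma \ref{thm:order_of_zeros}: since $(\gamma,\lambda) \in \mdiags{\N} \setminus \N_t$ (outliers are automatically non-trivial, as $\N_t \subseteq \interior{\N}$), at most one of $\gamma, \lambda$ has multiplicity $\geq 2$ for $f$. Hence at least one of the two zeros is simple. Note that a simple zero is automatically a sign change. If both were simple, then both would be sign changes, yielding $(\gamma,\lambda) \in \N'$, contradicting outlier status. Therefore exactly one zero is simple (order $1$) and the other has order $m \geq 2$. Now suppose, for contradiction, that $m$ were odd; then the non-simple zero would also be a sign change, and together with the simple one this would again place $(\gamma,\lambda) \in \N'$ — contradiction. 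So $m$ is even, i.e.\ $m = 2k$ for some $k \in \NN$, $k \geq 1$.

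For part 2, assume $0 < \gamma < \lambda$. By Lemma \ref{thm:order_of_zeros} part 2, $\gamma$ is a simple zero of $f$. Combined with part 1 (exactly one of the two is simple), this forces $\lambda$ to be the non-simple zero, of order $2k$ as established above.

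The argument is essentially a bookkeeping step rather than a hard analytical one; the real content sits in the earlier Lemma \ref{thm:order_of_zeros} and in Theorem \ref{thm:N_prime_result}. The only subtle point — and the one I would write out most carefully — is the logical chain: outlier $\Rightarrow$ not in $\N'$ $\Rightarrow$ at least one zero of the \emph{specific chosen} $f$ is not a sign change, together with the observation that odd-order real zeros of a real analytic function are sign changes while even-order ones are not. Once that is nailed down, both parts drop out immediately.
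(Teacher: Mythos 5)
Your argument matches the paper's proof essentially step for step: Lemma \ref{thm:order_of_zeros} forces one zero to be simple (hence a sign change), Theorem \ref{thm:N_prime_result} together with the outlier hypothesis forces the other zero not to be a sign change (hence of even order), and part 2 is read off from Lemma \ref{thm:order_of_zeros} part 2. The one small imprecision is the parenthetical claim that $\N_t \subseteq \interior{\N}$ — because of the non-strict inequality $\frac{1}{2}\leq\abs{\gamma\lambda}$, points on the inner boundary of $\N_t$ need not obviously lie in $\interior{\N}$; what you actually need (and what is true, since such boundary points are limits of interior trivial points) is $\N_t \subseteq \overline{\interior{\N}}$, which still excludes outliers from $\N_t$.
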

\begin{proof}
	For the first part, due to Lemma  \ref{thm:order_of_zeros}, one zero is of order 1, and in particular a sign change for $f$. Due to Theorem \ref{thm:N_prime_result}, only one of the zeros is a sign change (otherwise the point is not an outlier), hence it is the simple zero. The other zero is a local extremum which imply its order is even.
	
	The second part follows from the second part of Lemma \ref{thm:order_of_zeros}.
\end{proof}

We determine the "tail" of any $f\in\B$ on which $f(\gamma)=f(\lambda)=0$:
\begin{lemma}\label{thm:tail_shape}
	Let $(\gamma,\lambda)\in \N''$ with a power series $f\in\B$ such that $f(\gamma)=f(\lambda)=0$. $f$ has a sign change in exactly one of $\gamma,\lambda$, the other is either a local minimum or local maximum.

	There are four cases, in each of them we determine the tail of $f$:
	\begin{enumerate}
		\item If the zero which is a local maximum is positive, then $f$ ends with a tail of pluses.
		
		\item  If the zero which is a local minimum is positive, then $f$ ends with a tail of minuses.
		
		\item If the zero which is a local maximum is negative, then $f$ ends with a tail where all even coefficients are $1$ and all odd coefficients are $-1$.
		
		\item If the zero which is a local minimum is negative, then $f$ ends with a tail where all even coefficients are $-1$ and all odd coefficients are $1$.
		
	\end{enumerate}
\end{lemma}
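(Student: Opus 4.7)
The plan is to argue by contradiction in each of the four cases, constructing from any ``wrong'' tail coefficient a small perturbation $f^{(n)}\in\B$ that converges uniformly to $f$ and violates the outlier property via Theorem~\ref{thm:N_prime_result}. The structural statement---that $f$ has a sign change at exactly one of $\gamma,\lambda$ and a local extremum at the other---is immediate from Lemma~\ref{thm:eliminate_case} together with the observation that if both zeros were sign changes then $(\gamma,\lambda)\in\N'\subseteq\overline{\interior{\N}}$, contradicting $(\gamma,\lambda)\in\N''$. Since the extremum zero has even order $2k$, $f$ has constant sign in a deleted neighborhood, making it either a local maximum or a local minimum; combined with the sign of the zero this partitions the analysis into exactly the four cases.

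For case~(1), take $\lambda>0$ a local maximum and suppose for contradiction that $a_n\in\{-1,0\}$ for infinitely many indices $n$. For each such $n$ I would replace that single coefficient by $+1$, defining $f^{(n)}\in\B$. Then $f^{(n)}-f=(1-a_n)x^n$ with $1-a_n\in\{1,2\}$, so $f^{(n)}\rightrightarrows f$ on every compact subinterval of $(-1,1)$, and
\begin{equation*}
f^{(n)}(\lambda)=(1-a_n)\lambda^n>0.
\end{equation*}
Taking the constant sequence $\alpha_n=\lambda$, Lemma~\ref{thm:case_12_tail}(\ref{thm:case12_tail_b}) produces sign-change zeros $\lambda_n\to\lambda$ of $f^{(n)}$ along this subsequence, while Lemma~\ref{thm:case_12_tail}(\ref{thm:case12_tail_a}) applied to the simple sign-change zero $\gamma$ of $f$ produces sign-change zeros $\gamma_n\to\gamma$ of $f^{(n)}$. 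Hence $(\gamma_n,\lambda_n)\in\N'$ and $(\gamma_n,\lambda_n)\to(\gamma,\lambda)$, so Theorem~\ref{thm:N_prime_result} yields $(\gamma,\lambda)\in\overline{\interior{\N}}$, contradicting $(\gamma,\lambda)\in\N''$. Therefore only finitely many $a_n$ differ from $+1$, which is the claimed tail.

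The other three cases follow the same template, with the flip direction chosen so that $(a'_n-a_n)\lambda^n$ has the correct sign. In case~(2), $\lambda>0$ and a local minimum, I would flip any $a_n\in\{0,1\}$ down to $-1$, giving $f^{(n)}(\lambda)<0$ and invoking part~(\ref{thm:case12_tail_c}). In case~(3), $\lambda<0$ and a local maximum, the sign of $\lambda^n$ alternates with the parity of $n$; for even $n$ with $a_n\ne 1$ I flip up to $+1$, and for odd $n$ with $a_n\ne -1$ I flip down to $-1$, each yielding $f^{(n)}(\lambda)>0$ and hence part~(\ref{thm:case12_tail_b}). In case~(4), $\lambda<0$ and a local minimum, I flip in the opposite direction in each parity to produce $f^{(n)}(\lambda)<0$ and apply part~(\ref{thm:case12_tail_c}). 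In each case, the subsequence of ``wrong'' indices must be finite, which is exactly the tail structure asserted.

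I do not foresee a substantive obstacle; the argument is a clean single-coefficient flip followed by an application of Lemma~\ref{thm:case_12_tail}. The main care required is the parity-by-sign bookkeeping of the four cases and the verification that $\alpha_n=\lambda$ is a legitimate choice of approximating sequence. The one-coefficient modification is the crucial device: it keeps $f^{(n)}$ inside $\B$, produces uniform convergence $f^{(n)}\rightrightarrows f$, and yet deposits a definite-sign value at $\lambda$ that the local-extremum hypothesis on the limit $f$ cannot cancel.
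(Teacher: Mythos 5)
Your proposal is correct and follows essentially the same route as the paper: perturb $f$ by flipping a single "wrong" tail coefficient at arbitrarily large index, observe the resulting $g_n\in\B$ converges uniformly to $f$ and has the right sign at the extremal zero, then invoke Lemma~\ref{thm:case_12_tail} (the paper packages this as Corollary~\ref{thm:case1_2}) and Theorem~\ref{thm:N_prime_result} to contradict $(\gamma,\lambda)\in\N''$. The only cosmetic difference is that you treat all four cases explicitly with parity bookkeeping, whereas the paper derives cases 2 and 4 from cases 1 and 3 by replacing the $+x^{k_n}$ perturbation with $-x^{k_n}$.
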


\begin{proof}
	The second and fourth cases follow from the first and third (respectively) by replacing $+1$ with $-1$. 
	Without loss of generality, assume the zero which is a local extremum is $\lambda$. 
	In each of the cases, we perturb $f$ infinitely often in a direction for which we can apply Corollary \ref{thm:case1_2}, and contradict the point not being in $\overline{\interior{\N}}$. 
	
	In the first case, assume, for the sake of contradiction, that the tail is not all pluses. Then we can define $g_n(x):=f(x)+x^{k_n}$ for a series $k_n\nearrow\infty$ such that $g_n\in\B$. This is possible exactly because there is a sequence of coefficients which are not all ones. By Lemma \ref{thm:B_compactness}, there is a subsequence $g_{n_j}$ which converges uniformly, but the limit must be $f$, i.e.\  $g_{n_j}\rightrightarrows f$, and $g_{n_j}(\lambda)=\lambda^{k_{n_j}}>0$. Therefore we can apply Corollary \ref{thm:case1_2} and the point is in $\overline{\interior{\N}}$, in contradiction.
	
	In the third case, assume otherwise, then there are two options:
\begin{itemize}
	\item There are infinitely many even coefficients not being $1$. We define $g_n(x)=f(x)+x^{2k_n}\in\B$ for this sequence $k_n\nearrow\infty$.
	\item 
	There are infinitely many odd coefficients not being $-1$. We define $g_n(x)=f(x)-x^{2k_n+1}$ for this sequence $k_n\nearrow\infty$.
\end{itemize}
In both cases we find that $g_n\in\B$ and as $\lambda<0$, also $g_n(\lambda)>0$. Therefore Corollary \ref{thm:case1_2} applies in a similar manner.
\end{proof}
By writing this tail as a geometric series, we obtain that for all four cases, both $\gamma,\lambda$ are zeros of polynomials with coefficients in $\{-2,-1,0,1,2\}$. In particular:
\algebraic
\begin{proof}
	We know the tail of $f$ from Lemma \ref{thm:tail_shape}. In the first case, we can write $$f(x)=p_m(x)+\frac{x^{m+1}}{1-x}$$ for a polynomial $p_m$ of degree $m$ with coefficients in $\{-1,0,1\}$.
	Therefore any zero of $f$ satisfies 
	$$(1-x)p_m(x)+x^{m+1}=0$$
	which is a polynomial with coefficients in $\{-2,-1,0,1,2\}$. The other cases follow similarly.
\end{proof}
This leads to the final (and strongest) restriction:

\isolated

\begin{proof}
	Assume, for the sake of contradiction, that $(\gamma_n,\lambda_n)\to(\gamma,\lambda)$ which is not eventually constant, $(\gamma_n,\lambda_n)\in \N$ and let $f_n\in\B$ for $(\gamma_n,\lambda_n)$. Assume without loss of generality that $\gamma$ is simple and $\lambda$ is a local extremum of some even order $2k$. We will prove that for all sufficiently large $n$,  $f_n$ is of the same form as $f$: 
\begin{itemize}
	\item 	First, we claim $\gamma_n$ is simple and $\lambda_n$ is a local extremum. Indeed, $\gamma_n$ is simple due to Hurwitz theorem on some small neighborhood of $\gamma$. As for $\lambda_n$, similarly, for sufficiently large $n$ there are $2k$ complex zeros of $f_n$ around $\lambda$. For sufficiently large $n$, none of them are sign changes because that would imply $(\gamma,\lambda)\in\overline{\N'}\subseteq\overline{\interior{\N}}$. Therefore all real zeros are of even order, including $\lambda_n$.
	\item 
	
	Second, for sufficiently large $n$, the tail of $f_n$ is of the same shape as the tail of $f$ (as in Lemma \ref{thm:tail_shape}), because otherwise $(\gamma_n,\lambda_n)\in\overline{\N'}$ due to Corollary \ref{thm:case1_2}, and hence their limit would lie in $\overline{\N'}\subseteq\overline{\interior{\N}}$. 
\end{itemize}
	
	Suppose $k_n$ is the largest coefficient of $f_n$ that does not match the form of the tail (e.g., for the first case, the largest non-positive coefficient). We claim $k_n\nearrow\infty$. Indeed, $\gamma_n,\lambda_n$ are zeros of polynomials from a discrete set of coefficients. If the degrees of these polynomials would also be bounded, it would imply there are only finitely many of them. Therefore $(\gamma_n,\lambda_n)$ is eventually constant (as this sequence converges) in contradiction.
	
	Therefore $k_n\nearrow\infty$, and we define $g_n$ as in the proof of Lemma \ref{thm:tail_shape}. For instance, in the first case we define $g_n(x)=f_n(x)+x^{k_n}\in \B$. Clearly $g_n\in\B$.	By Lemma \ref{thm:B_compactness} there is a subsequence for which $f_n,g_n\rightrightarrows g$ for some $g\in\B$.
	Since $(\gamma,\lambda)\notin \N'$, $g$ has one zero with a sign change, and the other is a local extremum (as per Lemma \ref{thm:eliminate_case}). Also, by the construction $g_n(\lambda_n)={\lambda_n}^{k_n}>0$ and the conditions of Corollary \ref{thm:case1_2} are satisfied for $g,g_n$ and therefore $(\gamma,\lambda)\in \overline{\N'}$ in contradiction.
\end{proof}

We conjecture there are no such isolated points, but we cannot prove it at the time of writing.

\section{Further Questions}\label{sec:questions}

Regarding the topology $\N$, the most relevant open question is whether isolated points actually exist. We believe there are none.

\begin{conjecture}
	$\N$ contains no isolated points.
\end{conjecture}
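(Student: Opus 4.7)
The plan is to argue by contradiction: suppose $(\gamma_n,\lambda_n) \to (\gamma,\lambda)$ is a non-eventually-constant sequence in $\N$ with $(\gamma_n,\lambda_n) \neq (\gamma,\lambda)$, and choose representatives $f_n \in \B$ with $f_n(\gamma_n) = f_n(\lambda_n) = 0$. The goal is to produce a sequence in $\N'$ converging to $(\gamma,\lambda)$; since $\overline{\N'} \subseteq \overline{\interior{\N}}$ by Theorem \ref{thm:N_prime_result}, this would contradict $(\gamma,\lambda) \in \N''$. By Lemma \ref{thm:eliminate_case}, I may assume that $\gamma$ is a simple zero of $f$ and $\lambda$ is a local extremum of even order $2k$. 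Passing to a subsequence using Lemma \ref{thm:B_compactness}, I arrange $f_n \rightrightarrows f$. Hurwitz's theorem then forces $\gamma_n$ to be a simple real zero of $f_n$ for large $n$ and clusters exactly $2k$ complex zeros of $f_n$ around $\lambda$; moreover no real zero of $f_n$ near $\lambda$ can be a sign change for infinitely many $n$, because such a sign-change zero $\lambda_n'$ would give $(\gamma_n,\lambda_n')\in\N'$ converging to $(\gamma,\lambda)$, an immediate contradiction.

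Next I would show that the tail of $f_n$ matches the tail shape of $f$ described in Lemma \ref{thm:tail_shape}. Concretely, in case~(1), where $f$ has a tail of $+1$'s past some index, suppose for contradiction that infinitely many $f_n$ have a non-$(+1)$ coefficient at some position $j_n$ past the cutoff. Perturbing $f_n$ at $j_n$ by adding $x^{j_n}$ yields $h_n \in \B$ with $h_n(\lambda_n) = \lambda_n^{j_n} > 0$. Applying Corollary \ref{thm:case1_2} to $f_n$ (which converges to $f$) and $h_n$ — invoking Lemma \ref{thm:case_12_tail}(\ref{thm:case12_tail_a},\ref{thm:case12_tail_b}) for the simple zero at $\gamma$ and the extremum at $\lambda$ — produces points of $\N'$ accumulating at $(\gamma_n,\lambda_n)$; thus $(\gamma_n,\lambda_n)\in\overline{\N'}$, and the limit forces $(\gamma,\lambda)\in\overline{\N'}$, again a contradiction. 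The other three cases of Lemma \ref{thm:tail_shape} are handled by the analogous perturbations.

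Finally, with the tails agreeing from some point on, define $k_n$ to be the largest index at which $f_n$'s coefficient disagrees with the tail pattern of $f$. Corollary \ref{thm:outlier_algebraic} exhibits $(\gamma_n,\lambda_n)$ as a zero pair of a polynomial of degree at most $k_n+1$ with coefficients in $\{-2,-1,0,1,2\}$; were $k_n$ bounded, only finitely many candidate pairs would exist, contradicting non-eventual-constancy. Hence $k_n \to \infty$. Set $g_n(x) = f_n(x) + x^{k_n}$ in case~(1) (with analogous perturbations in the other cases); then $g_n \in \B$, and since $x^{k_n} \to 0$ uniformly on compact subsets of $(-1,1)$, Lemma \ref{thm:B_compactness} yields a further subsequence with $f_n \rightrightarrows f$ and $g_n \rightrightarrows f$ simultaneously. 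Because $g_n(\lambda_n) = \lambda_n^{k_n}$ has a determined sign and $\lambda$ is still a local extremum of the limit, Corollary \ref{thm:case1_2} applied to $f$ and $g_n$ delivers zeros of $g_n$ with sign changes converging to both $\gamma$ and $\lambda$ — elements of $\N'$ converging to $(\gamma,\lambda)$, the desired contradiction.

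The main obstacle is keeping the three successive subsequence extractions — uniform convergence $f_n \rightrightarrows f$, stabilization of the tail, and the simultaneous convergence $g_n \rightrightarrows f$ — compatible. The tail-matching step is the most delicate, because $f_n$ may disagree with $f$'s tail at positions that tend to infinity, and one must both rule out disagreements at finite positions (using perturbations that keep the resulting polynomial in $\B$) and then exploit divergence of the remaining mismatch index $k_n$ to close the argument. Once all three steps are aligned along a common subsequence, the contradiction falls out directly from Corollary \ref{thm:case1_2}.
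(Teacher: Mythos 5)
This statement is a \emph{conjecture} in the paper, explicitly left unresolved: the text right after Theorem~\ref{thm:outlier_isolated} says ``We conjecture there are no such isolated points, but we cannot prove it at the time of writing.'' What you have written is not a proof of the conjecture at all --- it is a verbatim reconstruction of the paper's proof of Theorem~\ref{thm:outlier_isolated}, which is a genuinely different (and strictly weaker) statement.

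Look at your own setup. You assume two things simultaneously: that $(\gamma,\lambda)\in\N''$, and that a non-eventually-constant sequence $(\gamma_n,\lambda_n)\to(\gamma,\lambda)$ exists in $\N$. You then derive the contradiction $(\gamma,\lambda)\in\overline{\N'}\subseteq\overline{\interior{\N}}$, contradicting $(\gamma,\lambda)\in\N''$. The logical content of this is: \emph{if} $(\gamma,\lambda)\in\N''$ \emph{then} no such sequence exists, i.e.\ outlier points are isolated. That is precisely Theorem~\ref{thm:outlier_isolated}. It does not rule out the existence of outlier/isolated points; it characterizes them. The conjecture asks for something else entirely: that $\N''$ is \emph{empty} (equivalently, given Theorem~\ref{thm:outlier_isolated} and the easy converse that isolated points cannot lie in $\overline{\interior{\N}}$, that $\N$ has no isolated points). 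To prove the conjecture by contradiction you would have to start from ``suppose $(\gamma,\lambda)$ \emph{is} isolated'' --- i.e.\ suppose no such sequence exists --- and derive absurdity, which is the opposite of your hypothesis. Nothing in your argument, nor in Lemmas~\ref{thm:eliminate_case}, \ref{thm:tail_shape}, Corollary~\ref{thm:outlier_algebraic}, or the $k_n\to\infty$ step, gets you any purchase on that; those results all presuppose an outlier exists and constrain what it must look like. The missing idea --- some mechanism showing that no power series $f\in\B$ with a simple sign-change zero, an even-order extremal zero, and an eventually-periodic tail of the four prescribed shapes can produce an outlier --- is exactly what the paper says it cannot supply.
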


The restrictions we found (the tail and the multiplicity) seem very strict, and the existence of such isolated points would seem like an unlikely coincidence.

In this paper, we used perturbations that change a single coefficient from a power series $f\in\B$. Perhaps we can use more complex perturbations that work only on subsets of $\N$, possibly based on inequalities of the parameters.
This approach could allow us to prove that certain regions of $\N$ contain no isolated points. 
Moreover, in $\OO$ there are no "single coefficient perturbations", and finding a different perturbation method is the primary challenge to adapt the methods of this paper to $\OO$.

Another potential application is to the geometry of $\N$: identifying "cusp corners" (previous work on these can be found in \cite{N_general}, specifically Theorem 2.3 which characterizes many cusp corners, but is not exhaustive). A sketch of the argument is as follows:

We can explicitly find a polygonal region inside the convex hull of the attractor, which is completely disjoint from the attractor itself. 
If a translation vector between cylinder sets falls exactly on the boundary of this region (which is equivalent to an algebraic equation between the parameters), then it is necessarily not in the interior (unless a different translation vector is inside the region, but the normalized translation vectors are far apart, we could eliminate this with some work). This provides explicit formulas for points on the boundary of $\N$. Finding a limit of these boundary points (maybe that minimizes or maximizes some function) would result in a cusp corner.

We can aim to further generalize the method of traps:
A promising candidate is classes of IFSs on the plane that are affine but not homogeneous. That is, we can introduce rotations. For example, the family of IFSs $\{x\mapsto \alpha z x, x\mapsto z x + (1-z) \}$ where $\alpha\in(0,2\pi)$ and $z\in\mathbb{C}$ are parameters. This could work by using a limiting argument on a subsequence of cylinder sets that are "almost" translations of each other. We have not explored this direction in this paper. This family of IFSs, and the associated connectedness locus, were not researched before as far as we know.

\section{Additional Proofs}\label{sec:aux_proofs}

\begin{proof}[Proof of Lemma \ref{thm:B_completeness}]
	Since the functions in $\B$ are all complex analytic on the unit disk, it follows that uniform convergence implies uniform convergence of all derivatives; hence $f^{(k)}_n(0)\xrightarrow{n\to\infty} f^{(k)}(0)$. But $f^{(k)}_n(0)\in \{-k!,0,k!\}$ which is a discrete set. Therefore $f^{(k)}_n(0)$ is eventually constant, and the limit must also be in $\{-k!,0,k!\}$. This clearly implies $f\in\B$.
\end{proof}

\begin{proof}[Proof of Lemma \ref{thm:simple_is_U}]
	There exists a simple zero of $f_n$ for sufficiently large $n$, due to Hurwitz theorem. The zero is real because $f$ is real, and the order is 1.
	Assume, for the sake of contradiction, that there exists critical points $\lambda_n$ of $f_n$ arbitrarily close to $\lambda$. This means $f_n'(\lambda_n)=0$ and $\lambda_n\to\lambda$. But $f'_n$ uniformly converges to $f'$ which would imply $f'(\lambda)=0$, contradicting $\lambda$ being simple.
\end{proof}
\begin{proof}[Proof of Lemma \ref{thm:unqiue_implicit_is_continuous}]
	Let $x_i\to x$ be a converging sequence in $K$. Assume, for the sake of contradiction, that $g(x_i)\not\to g(x)$. Therefore there exists an $\epsilon>0$ and a subsequence $x_{i_j}$ for which $\abs{g(x_{i_j})-y}\geq \epsilon$ for all sufficiently large $j$. Since $K$ is closed and $x\in K$ then $g(x)\in M$ is well defined. $M$ is compact, hence $g(x_{i_j})$ contains a converging subsequence $g(x_{i_{j_k}})\to y$ where $y\in M$. But by continuity
	$$
	0=F(x_{i_{j_k}},g(x_{i_{j_k}}))\to F(x,y)=0,
	$$
	and also $F(x,g(x))=0$. Therefore by the uniqueness, we must conclude $y=g(x)$. This contradicts the choice of $x_{i_j}$.
\end{proof}
\begin{proof}[Proof of Lemma \ref{thm:not_U_lower_order}]
	Extend $f$ to a complex analytic function in a small neighborhood of $\lambda$. Since $f$ is changing sign around $\lambda$, so is $f_n$ for all sufficiently large $n$. Hence there exists a zero which is a sign change for $f_n$, say of order $k_n$. $f'$ has a zero of order $k-1$ at $\lambda$. Due to Hurwitz theorem for $f$, for all sufficiently large $n$, $f_n$ has exactly $k$ zeros (counting with multiplicity) in this neighborhood, and in particular $k_n\leq k$. Similarly, using Hurwitz theorem for $f'$, which has a zero of order $k-1$ at $\lambda$, for sufficiently large $n$, $f_n'$ has exactly $k-1$ zeros.
	
	Assume, for the sake of contradiction, that $k_n=k$ for all sufficiently large $n$. 
	Observe that $\lambda_n$ is a zero of order $k_n-1$ for $f_n'$, therefore there can't be any other zero for infinitely many $n$s, but this implies property U (as there will be no more zeros of $f_n'$) in contradiction.
\end{proof}

\begin{proof}[Proof of Lemma \ref{thm:B_compactness}]
	Given that $$\abs{f_n(x)}\leq\frac{1}{1-x},$$ it follows that
	$$
	\norm{f_n}_{L^1[a,b]}\leq \frac{1}{1-\max\{\abs{a},\abs{b}\}}.
	$$
	By Bolzano-Weirstrass there is a convergent subsequence in $L^1[a,b]$, and by Lemma \ref{thm:B_completeness} this limit is in $\B$.
\end{proof}

\section{Acknowledgments}
I am sincerely thankful to my advisor, Professor Boris Solomyak, for his guidance and mentorship throughout my research. His support and expertise have been invaluable to the completion of this work.

This research is a part of master's thesis at the Bar-Ilan University and was supported in part by the Israel Science Foundation grant 1647/23.
\bibliographystyle{alpha}
\bibliography{paper} 
\end{document}